\let\old@setaddresses\@setaddresses
\def\@setaddresses{\bigskip{\parindent 0pt\let\scshape\relax\let\ttfamily\relax\old@setaddresses}}
\newtheorem{theorem}{Theorem}
\newtheorem{lemma}[theorem]{Lemma}
\newtheorem{corollary}[theorem]{Corollary}
\theoremstyle{remark}
\title{On flips in planar matchings}
\author{Marcel Milich}
\address[Marcel Milich]{Institut f\"ur Mathematik, Technische Universit\"at Berlin, Germany}
\email{marcel.milich@campus.tu-berlin.de}
\author{Torsten M\"utze}
\address[Torsten M\"utze]{Department of Computer Science, University of Warwick, United Kingdom}
\email{torsten.mutze@warwick.ac.uk}
\author{Martin Pergel}
\address[Martin Pergel]{Department of Software and Computer Science Education, Charles University Prague, Czech Republic}
\email{perm@kam.mff.cuni.cz}
\thanks{Torsten M\"utze is also affiliated with the Faculty of Mathematics and Physics, Charles University Prague, Czech Republic, and he was supported by Czech Science Foundation grant GA~19-08554S and by German Science Foundation grant~413902284.
Martin Pergel was also supported by Czech Science Foundation grant GA~19-08554S}
\thanks{An extended abstract of this paper appeared as~\cite{proceedings:20}.}
\begin{document}

\begin{abstract}
In this paper we investigate the structure of flip graphs on non-crossing perfect matchings in the plane.
Specifically, consider all non-crossing straight-line perfect matchings on a set of $2n$ points that are placed equidistantly on the unit circle.
A \emph{flip} operation on such a matching replaces two matching edges that span an empty quadrilateral with the other two edges of the quadrilateral, and the flip is called \emph{centered} if the quadrilateral contains the center of the unit circle.
The graph~$\cG_n$ has those matchings as vertices, and an edge between any two matchings that differ in a flip, and it is known to have many interesting properties.
In this paper we focus on the spanning subgraph~$\cH_n$ of~$\cG_n$ obtained by taking all edges that correspond to centered flips, omitting edges that correspond to non-centered flips.
We show that the graph~$\cH_n$ is connected for odd~$n$, but has exponentially many small connected components for even~$n$, which we characterize and count via Catalan and generalized Narayana numbers.
For odd $n$, we also prove that the diameter of~$\cH_n$ is linear in~$n$.
Furthermore, we determine the minimum and maximum degree of~$\cH_n$ for all~$n$, and characterize and count the corresponding vertices.
Our results imply the non-existence of certain rainbow cycles in~$\cG_n$, and they resolve several open questions and conjectures raised in a recent paper by Felsner, Kleist, M\"utze, and Sering.
\end{abstract}

\keywords{Flip graph, matching, diameter, cycle}

\maketitle

\section{Introduction}

Flip graphs are a powerful tool to study different classes of basic combinatorial objects, such as binary strings, permutations, partitions, triangulations, matchings, spanning trees etc.
A flip graph has as vertex set all the combinatorial objects of interest, and an edge between any two objects that differ only by a small local change operation called a \emph{flip}.
It thus equips the underlying objects with a structure that reveals interesting properties about the objects, and that allows one to solve different fundamental algorithmic tasks for them.

A classical example in the geometric context is the flip graph of triangulations, which has as vertices all triangulations of a convex $n$-gon, and an edge between any two triangulations that differ in removing the diagonal of some quadrilateral formed by two triangles and replacing it by the other diagonal.
A problem that has received a lot of attention is to determine the diameter of this flip graph, i.e., how many flips are always sufficient to transform two triangulations into each other.
This question was first considered by Sleator, Tarjan, and Thurston~\cite{MR928904} in the 1980s, and answered conclusively only recently by Pournin~\cite{MR3197650}.
This problem has also been studied extensively from an algorithmic point of view, with the goal of computing a short flip sequence between two given triangulations~\cite{DBLP:conf/cocoon/LiZ98,MR1744193,MR2541971,MR2740180}.
It is a well-known open question whether computing a shortest flip sequence is NP-hard.
Interestingly, the problem is known to be hard when the convexity assumption is dropped:
Specifically, computing a shortest flip sequence is NP-hard for triangulations of a simple polygon~\cite{MR3372115} and for general point sets~\cite{MR3399985}.

Another important property of flip graphs is whether they have a Hamilton path or cycle.
The reason is that computing such a path corresponds to an algorithm that exhaustively generates the underlying combinatorial objects~\cite{MR3444818}.
It is known that the flip graph of triangulations mentioned before has a Hamilton cycle~\cite{MR1723053}, and that a Hamilton path in this graph can be computed efficiently~\cite{MR1239499}.

Flip graphs also have deep connections to lattices and polytopes~\cite{MR3221544,MR3645055,MR3964495,DBLP:conf/wg/AichholzerCHKMS19}.
For instance, the aforementioned flip graph of triangulations of a convex $n$-gon arises as the cover graph of the well-known Tamari lattice, and can be realized as an $(n-3)$-dimensional polytope in several different ways~\cite{MR1022776,MR3437894}.
Other properties of interest that have been investigated for the flip graph of triangulations are its automorphism group~\cite{MR1022776}, the vertex-connectivity~\cite{MR1723053}, the chromatic number~\cite{MR2535071,berry-et-al:18}, its genus~\cite{MR3738334}, and the eccentricities of vertices~\cite{MR3994734}.
Similar results are known for flip graphs of other geometric configurations, such as matchings, spanning trees, partitions and dissections, etc.; see e.g.~\cite{MR2190792,MR2346418,MR2510231,MR3336579}.
Note that even for a fixed set of geometric objects such as matchings, there may be several natural notions of flips, yielding different flip graphs (cf.~\cite{MR2190792,MR2346418}).

\begin{figure}
\includegraphics{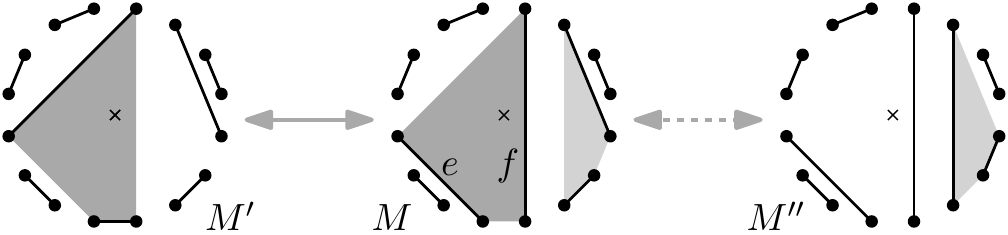}
\caption{Flips between matchings $M$ and $M'$ (centered flip), and between $M$ and $M''$ (non-centered flip).} 
\label{fig:flip}
\end{figure}

\begin{figure}
\includegraphics{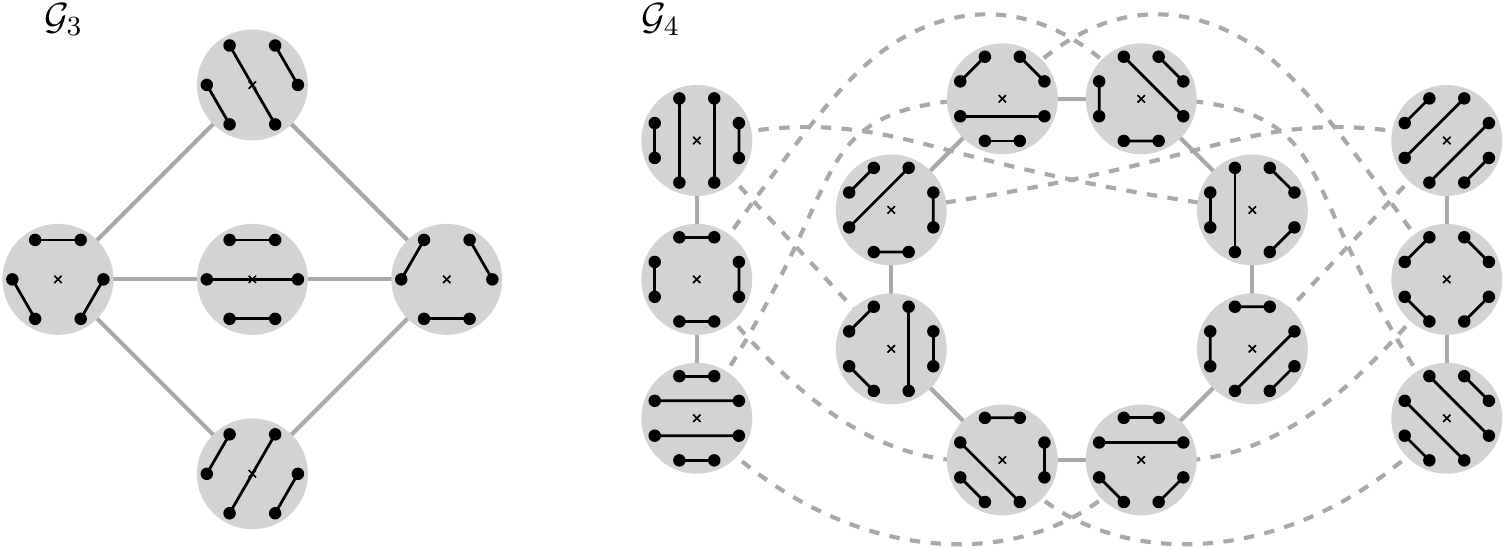}
\caption{Flip graphs $\cG_3$ (left) and $\cG_4$ (right).
Solid edges correspond to centered flips and are present in the subgraphs $\cH_3\seq\cG_3$ and $\cH_4\seq\cG_4$, whereas the dashed edges correspond to non-centered flips and are not present in these subgraphs.
} 
\label{fig:g34}
\end{figure}

\begin{figure}
\includegraphics{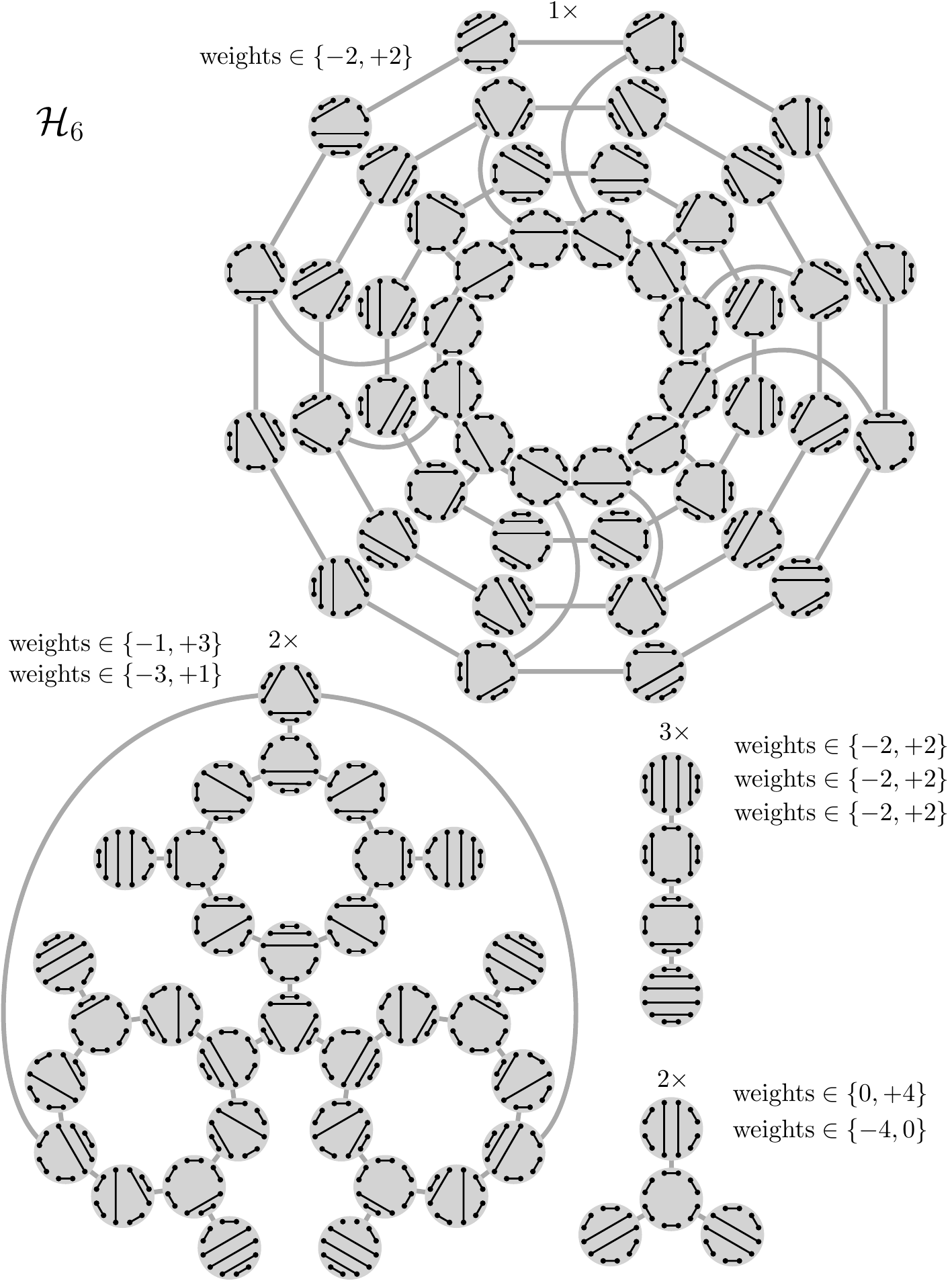}
\caption{The graph $\cH_6$ with the weights of all of its components.
Among the components, isomorphic copies are omitted, and the multiplicities are shown above the components.
The isomorpic copies differ only by rotation of the matchings.
For instance, there are two copies of the component shown at the bottom right.
}
\label{fig:h6}
\end{figure}

In this paper, we consider the flip graph of non-crossing perfect matchings in the plane defined as follows.
For any integer $n\geq 2$, we consider a set of $2n$~points placed equidistantly on a unit circle.
We let $\cM_n$ denote the set of all non-crossing straight-line perfect matchings with $n$ edges on this point set.
It is well-known that the cardinality of~$\cM_n$ is given by the $n$th \emph{Catalan number $C_n=\frac{1}{n+1}\binom{2n}{n}$}; see~\cite{MR3467982}.
For any matching $M\in\cM_n$, consider two matching edges $e,f\in M$ that span an empty quadrilateral, i.e., the convex hull of these two edges does not contain any other edges of~$M$; see Figure~\ref{fig:flip}.
Replacing the two edges $e$ and $f$ by the other two edges of the quadrilateral yields another matching $M'\in\cM_n$, and we say that $M$ and~$M'$ differ in a \emph{flip}.
The flip graph~$\cG_n$ has $\cM_n$ as its vertex set, and an undirected edge between any two matchings that differ in a flip.
Figure~\ref{fig:g34} shows the graphs~$\cG_3$ and~$\cG_4$.
Hernando, Hurtado, and Noy~\cite{MR1939072} proved that the graph~$\cG_n$ has diameter~$n-1$ and connectivity~$n-1$, is bipartite for all~$n$, has a Hamilton cycle for all even~$n\geq 4$, and no Hamilton cycle or path for any odd~$n\geq 5$.

We now distinguish between two different kinds of flips.
A flip is \emph{centered} if and only if the closed quadrilaterial that determines the flip contains the center of the unit circle.
For odd~$n$, the circle center may lie on the boundary of the quadrilateral, which still counts as a centered flip.
In Figure~\ref{fig:flip}, the flip between~$M$ and~$M'$ is centered, whereas the flip between~$M$ and~$M''$ is not.
In all our figures, the circle center is marked with a cross.
We let $\cH_n$ denote the spanning subgraph of~$\cG_n$ obtained by taking all edges that correspond to centered flips, omitting edges that correspond to non-centered flips.
Clearly, both graphs $\cH_n$ and $\cG_n$ have the same vertex set.
In Figure~\ref{fig:g34}, all solid edges belong to~$\cH_3$ or~$\cH_4$, respectively, whereas dashed edges do not.
The graph~$\cH_6$ is shown in Figure~\ref{fig:h6}.

The main motivation for considering centered flips comes from the study of rainbow cycles in flip graphs, a direction of research that was initiated in a recent paper by Felsner, Kleist, M\"utze, and Sering~\cite{MR4046775}.
A rainbow cycle generalizes the well-known concept of a balanced Gray code~\cite{MR1410880} to geometric flip graphs such as~$\cG_n$.
Balanced Gray codes are useful in applications that require some uniformity of the flip operations (see e.g.~\cite{MR2014514}), and in the context of the flip graph~$\cG_n$, this means intuitively that every flip operation occurs equally often along a cycle.
To be precise, an \emph{$r$-rainbow cycle} in the graph~$\cG_n$ is a cycle with the property that every possible matching edge appears exactly $r$ times in flips along this cycle.
Clearly, this means that every edge must also disappear exactly $r$ times in flips along the cycle.
In other words, if we assign each of the possible matching edges a distinct color, then we see every color appearing exactly $r$ times along the cycle, hence the name rainbow cycle.
For example, the graph~$\cG_4$ has a 1-rainbow cycle of length~8, formed by solid edges in Figure~\ref{fig:g34}, which all belong to the subgraph~$\cH_4$.
In fact, it was proved in~\cite[Lemma~10]{MR4046775} that every $r$-rainbow cycle in~$\cG_n$ may only contain centered flips, i.e., to search for rainbow cycles in~$\cG_n$, we may restrict our search to the much sparser subgraph~$\cH_n$.

Let us address another potential concern right away:
Our assumption that the $2n$ points of the point set are placed equidistantly on a unit circle is not necessary for expressing or proving any of our results.
Our results and proofs are indeed robust under moving the $2n$ points to any configuration in convex position, by suitably replacing all geometric notions by purely combinatorial ones.
In particular, centered flips can be defined without reference to the center of the unit circle (see Section~\ref{sec:prel}).
Nevertheless, in the rest of the paper we stick to the equidistancedness assumption, to be able to use both geometric and combinatorial arguments, whatever is more convenient.

\subsection{Our results}

In this work we investigate the structure of the graph~$\cH_n\seq\cG_n$.
In particular, we resolve some of the open questions and conjectures from the aforementioned paper~\cite{MR4046775} on rainbow cycles in flip graphs.
For several graph parameter, we observe an intriguing dichotomy between the cases where $n$ is odd or even, i.e., the graph~$\cH_n$ has an entirely different structure in those two cases.
Table~\ref{tab:res} contains a summary of our results, together with references to the theorems where they are established.

\begin{table}
\caption{Summary of properties of the graph~$\cH_n$.}
\label{tab:res}
\renewcommand{\arraystretch}{1.15}
\begin{tabular}{l|l|l}
graph property & odd $n\geq 3$ & even $n\geq 2$ \\ \hline
max.\ degree (Thm.~\ref{thm:max-deg}) & $n$ & $n/2$ \\
\# of max.\ deg.\ vertices & 2 & ? \\
min.\ degree (Thm.~\ref{thm:min-deg}) & 2 & 1 \\
\# of min.\ deg.\ vertices & $n\cdot(C_{(n-3)/2})^2$ & $n\cdot(C_{(n-2)/2})^2$ \\
diameter & \parbox[t][1cm][t]{3cm}{between $n-1$ and $11n-29$ (Thm.~\ref{thm:diam})} & $\infty$ \\
\# of components & \parbox[t][0cm][t]{3cm}{1 (Thm.~\ref{thm:conn})} & \parbox[t][2.5cm][t]{6cm}{${}\geq C_{n/2}+n-3$ (Thm.~\ref{thm:struct}+Cor.~\ref{cor:comp}); \\ $\binom{n}{n/2}$ many vertices form trees  \\ of size $n/2+1$ each; \\ component sizes bounded by Nara- \\ yana numbers (Thm.~\ref{thm:narayana} and Cor.~\ref{cor:size})} \\
$r$-rainbow cycles (Thm.~\ref{thm:rainbow}) & none for any $r\geq 1$ & none for large $r$ \\
Hamilton path/cycle & \multicolumn{2}{c}{none for $n\geq 4$\hphantom{xxxxxxxx}} \\
colorability & \multicolumn{2}{c}{bipartite (\cite{MR1939072})\hphantom{xxxxxxxx}}
\end{tabular}
\end{table}


Most importantly, the graph~$\cH_n$ is connected for odd~$n$ (Theorem~\ref{thm:conn}), but has exponentially many connected components for even~$n$ (Theorem~\ref{thm:struct} and Corollary~\ref{cor:comp}); cf.~Figure~\ref{fig:g34}.
For odd $n$, we show that the diameter of~$\cH_n$ is linear in~$n$ (Theorem~\ref{thm:diam}).
For even $n$, we provide a fine-grained picture of the component structure of the graph (Theorems~\ref{thm:struct} and~\ref{thm:narayana}, and Corollary~\ref{cor:size}).
We also describe the degrees of vertices in the graph~$\cH_n$ for all~$n$, and we characterize and count the vertices of minimum and maximum degree (Theorems~\ref{thm:deg}, \ref{thm:max-deg}, and~\ref{thm:min-deg}).
Finally, we easily see that the graph~$\cH_n$ does not admit a Hamilton cycle or path for any $n\geq 4$.
This follows from the non-Hamiltonicity of~$\cG_n$ for odd~$n\geq 5$ proved in~\cite{MR1939072}, and for even~$n\geq 4$ this is trivial as the graph~$\cH_n$ has more than one component.
Our results also imply the non-existence of certain rainbow cycles in the graph~$\cG_n$ (Theorem~\ref{thm:rainbow}).

In all of these results, Catalan numbers and generalized Narayana numbers make their appearance, and in our proofs we encounter several new bijections between different combinatorial objects counted by these numbers.

\subsection{Outline of this paper}

In Section~\ref{sec:prel} we discuss some terminology and observations that will be used throughout the paper.
In Section~\ref{sec:deg} we analyze the degrees of the graph~$\cH_n$. 
In Section~\ref{sec:odd} we present the structural results when the number~$n$ of matching edges is odd.
In Section~\ref{sec:even} we discuss the properties of~$\cH_n$ when $n$ is even.
Finally, in Section~\ref{sec:rainbow} we discuss the implications of our results with regards to the existence of rainbow cycles in~$\cG_n$.
We conclude with some open questions in Section~\ref{sec:open}.

\section{Preliminaries}
\label{sec:prel}

We first explain the combinatorial characterization of centered flips.
Given a matching $M\in\cM_n$, and any of its edges $e\in M$, we consider the number of other matching edges from~$M$ that lie on each of the two sides of the edge~$e$, and we let~$\ell(e)$ denote the minimum of these two numbers.
We refer to $\ell(e)$ as the \emph{length} of the edge~$e$.
If $\ell(e)=0$, then we refer to $e$ as a \emph{perimeter edge}.
In the example with $n=20$ matching edges shown in Figure~\ref{fig:edge}, the edge~$e$ has 7 other matching edges on one of its two sides, and 12 edges on the other side, and therefore $\ell(e)=\min\{7,12\}=7$.
Moreover, the matching in this figure has 11 perimeter edges.
We let $\mu=\mu(n)$ denote the maximum possible length of an edge, so the possible edge lengths are $0,1,\ldots,\mu$.
Clearly, we have
\begin{equation}
\label{eq:mu}
\mu=\lceil(n-2)/2\rceil=\begin{cases} (n-1)/2 & \text{ if $n$ is odd}, \\ (n-2)/2 & \text{ if $n$ is even}. \end{cases}
\end{equation}
The following lemma can be verified easily.

\begin{figure}
\makebox[0cm]{ 
\includegraphics{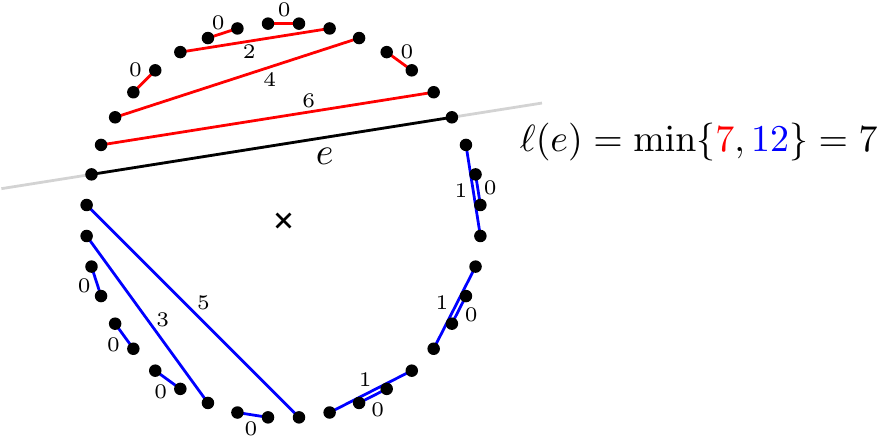}
}
\caption{Illustration of the definition of the length of an edge.
Each edge is labeled with its length.
}
\label{fig:edge}
\end{figure}

\begin{lemma}
\label{lem:centered}
A flip is centered if and only if the sum of the lengths of the four edges of the corresponding quadrilateral equals~$n-2$.
On the other hand, the flip is non-centered if and only if this sum is strictly less than~$n-2$.
\end{lemma}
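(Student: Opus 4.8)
## Proof Plan for Lemma~\ref{lem:centered}

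The plan is to set up coordinates relative to the quadrilateral~$Q$ that determines the flip, and to translate the condition ``$Q$ contains the circle center'' into a statement about how the $2n$ points on the circle are distributed among the four arcs cut off by the sides of~$Q$. Let the four vertices of~$Q$, in cyclic order around the circle, be $p_1,p_2,p_3,p_4$, so that the two matching edges forming the flip are (say) $p_1p_2$ and $p_3p_4$, and the four sides of~$Q$ are the chords $p_1p_2$, $p_2p_3$, $p_3p_4$, $p_4p_1$. Each side~$p_ip_{i+1}$ cuts the circle into two arcs; let $a_i$ denote the number of matching edges of~$M$ lying strictly on the far side of that chord (the side not containing~$Q$), for $i=1,2,3,4$, indices mod~$4$. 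Since $Q$ is empty, every one of the $n$ matching edges of~$M$ is either one of the two flip edges $p_1p_2,p_3p_4$, or lies entirely beyond exactly one of the four sides; hence
\begin{equation}
\label{eq:count}
a_1+a_2+a_3+a_4 = n-2.
\end{equation}
Here I use that an edge cannot ``straddle'' a side of~$Q$ without crossing it, and cannot lie strictly inside~$Q$ by emptiness; the two flip edges are exactly the sides $p_1p_2$ and $p_3p_4$ and contribute to none of the $a_i$.

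Next I relate the $a_i$ to the lengths $\ell$ of the four sides. For a side that is a flip edge, say $e=p_1p_2$: the two sides of~$e$ contain, respectively, $a_1$ matching edges (beyond $p_1p_2$) and $a_2+a_3+a_4+1$ matching edges (the edge $p_3p_4$ together with everything beyond the other three sides); by~\eqref{eq:count} the latter equals $n-1-a_1$, so $\ell(p_1p_2)=\min\{a_1,\,n-1-a_1\}$. For a side that is not a flip edge, say the chord $p_2p_3$: one side of it contains $a_2$ matching edges and the other contains the remaining $n-a_2$, so $\ell(p_2p_3)=\min\{a_2,\,n-a_2\}$. Thus, writing $\sigma$ for the sum of the lengths of the four sides of~$Q$, we always have $\sigma\le a_1+a_2+a_3+a_4=n-2$ by~\eqref{eq:count}, with equality if and only if each of the four minima is attained by the ``$a_i$-term'', i.e.\ $a_1\le n-1-a_1$ and $a_i\le n-a_i$ for $i=2,3,4$ — equivalently $a_1\le (n-1)/2$ and $a_2,a_3,a_4\le n/2$. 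So it remains to show that this system of inequalities holds precisely when $Q$ contains the circle center.

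The last step is the geometric heart of the argument. The center~$c$ of the unit circle lies in the closed quadrilateral~$Q$ if and only if $c$ lies on the $Q$-side of each of the four chords (weakly, allowing $c$ on a chord). For a chord~$pq$ of the unit circle, the center lies on the minor-arc side of the chord iff that arc spans more than half the circle — impossible — so in fact $c$ is always on the major-arc side or on the chord; concretely, $c$ is on the $Q$-side of the chord iff the arc cut off on the opposite side spans at most half of the circle, i.e.\ contains at most $n$ of the $2n$ equally spaced points (with ``exactly $n$'' being the boundary case where $c$ lies on the chord, possible only for odd~$n$ as noted in the text, since for even~$n$ a diametral chord would use two antipodal points and then an arc with exactly $n$ points forces the chord through the center only when... — I should be slightly careful here and just argue via the equidistant placement). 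Counting points on the far arc of a side in terms of the $a_i$: beyond the flip edge $p_1p_2$ there are $a_1$ matching edges, hence $2a_1$ endpoints, so the far arc contains $2a_1+?$ points — I must fix the bookkeeping of whether $p_1,p_2$ themselves count, but modulo that fixed additive constant the condition ``far arc has at most $n$ points'' becomes exactly $a_1\le(n-1)/2$; similarly for a non-flip side $p_2p_3$ the far arc has $2a_2$ endpoints of edges beyond it plus possibly $p_2$ or $p_3$, giving the condition $a_2\le n/2$, and likewise for $a_3,a_4$. This matches the equality condition from the previous paragraph exactly, completing the equivalence; the second sentence of the lemma (non-centered $\iff$ $\sigma<n-2$) is then immediate since $\sigma\le n-2$ always.

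The main obstacle I anticipate is the last step's careful bookkeeping: getting the ``$\pm 1$'' right in converting ``number of matching edges beyond a side'' into ``number of circle points on the far arc'', and handling the parity boundary case for odd~$n$ (center exactly on a chord) consistently for flip sides versus non-flip sides. Everything else — the partition identity~\eqref{eq:count} and the $\min$-rewriting of lengths — is routine once the ``empty quadrilateral'' hypothesis is used to rule out straddling and interior edges. In the write-up I would therefore state~\eqref{eq:count} as the first displayed claim, derive the four length formulas, and then spend the bulk of the proof on the clean geometric characterization of when a convex polygon inscribed in a circle contains the center, specialized to our point configuration.
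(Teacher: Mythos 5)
There is a genuine gap, and it sits exactly where you flagged the bookkeeping as risky. The two sides of the quadrilateral that are not flip edges, e.g.\ $p_2p_3$, are matching edges of the matching $M'$ obtained by performing the flip, and their length in the paper's sense must be computed there: beyond $p_2p_3$ lie $a_2$ edges and on the near side lie the remaining $n-1-a_2$ edges of $M'$, so $\ell(p_2p_3)=\min\{a_2,\,n-1-a_2\}$, not $\min\{a_2,\,n-a_2\}$ (for even $n$ your value can even exceed the maximum possible edge length $\mu=(n-2)/2$, which already signals that it is not a length of any matching edge). Likewise, the open arc beyond a non-flip side contains exactly $2a_2$ points ($p_2$ and $p_3$ never count, and no edge crosses the chord), so the condition for the center to lie weakly on the $Q$-side of that chord is $2a_2+1\le n$, i.e.\ $a_2\le(n-1)/2$ --- the same threshold as for the flip sides, not $a_2\le n/2$. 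These two slips are not harmless: take $n=4$, $M=\{\{1,2\},\{3,4\},\{5,6\},\{7,8\}\}$, and flip $\{1,2\}$ with $\{7,8\}$. Then $a=2$ for the side $\{2,7\}$ and $a=0$ for the other three sides; the quadrilateral $7,8,1,2$ does not contain the center, yet your formulas give the sum $0+0+0+\min\{2,2\}=2=n-2$ and all of your inequalities ($a\le(n-1)/2$ for the flip sides, $a\le n/2$ for the others) hold, so your argument certifies this non-centered flip as centered. With the correct length of $\{2,7\}$, namely $\min\{2,1\}=1$ in the flipped matching, the true sum is $1<n-2$, exactly as the lemma asserts.

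The rest of your plan is sound and the fix is purely local. Measuring each side in the matching that actually contains it treats all four sides uniformly: side $i$ has length $\min\{a_i,\,n-1-a_i\}$ and exactly $2a_i$ points strictly beyond it, so by your partition identity $a_1+a_2+a_3+a_4=n-2$ the sum of the four lengths is at most $n-2$, with equality if and only if $2a_i\le n-1$ for every $i$, which is precisely the condition that the center lies in the closed quadrilateral (the boundary case $2a_i=n-1$, center on a side, occurring only for odd $n$, in line with the paper's convention). Note that the paper states Lemma~\ref{lem:centered} without proof (``can be verified easily''), so there is nothing to compare against; the corrected version of your argument is a perfectly adequate verification.
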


We say that an edge $e\in M$ is \emph{visible} from the circle center, if the rays from the center to both edge endpoints do not cross any other matching edges.
If $n$ is odd, there may be an edge through the circle center, and then we decide visibility of the other edges by ignoring this edge, and declare the edge itself to \emph{not} be visible.

We also say that a point is \emph{hidden behind} a matching edge~$e$, if the point is not one of the endpoints of the edge and the ray from the center of the circle to the point crosses~$e$.
Similarly, we say that a matching edge~$f$ is \emph{hidden behind}~$e$, if both endpoints of~$f$ are hidden behind~$e$.
If $n$ is odd, there may be an edge through the circle center, and then no other points or edges are hidden behind this edge.

Treating the special case of an edge through the circle center in the way defined above simplifies and unifies some of the statements later (for example Theorem~\ref{thm:deg}).

\begin{lemma}
\label{lem:side}
Consider a matching $M\in\cM_n$ and a line $\rho$ between two antipodal points on the circle that are both endpoints of two visible matching edges~$e$ and~$f$.
If $n$ is odd, then $e$ and~$f$ lie on the same side of~$\rho$.
If $n$ is even, then $e$ and~$f$ lie on opposite sides of~$\rho$.
\end{lemma}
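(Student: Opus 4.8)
The plan is to reduce the statement to a parity count of how many points of the point set lie in one of the two half-disks bounded by $\rho$. Write $p,q$ for the two antipodal points, with $p$ an endpoint of $e$ and $q$ an endpoint of $f$. First I would observe that $e\neq f$: otherwise $e$ would be the chord $pq$, which passes through the circle center and is by definition not visible, contrary to hypothesis. So $e=\{p,p'\}$ and $f=\{q,q'\}$ with $p',q'\notin\{p,q\}$. The line $\rho$ is a diameter, and among the $2n$ points it contains only $p$ and $q$ (three of the $2n$ equidistant points are never collinear), so it splits the remaining $2n-2$ points into two sets $L,R$ of size $n-1$ each, one inside each open half-disk bounded by $\rho$; by definition, ``$e$ and $f$ lie on the same (resp.\ on opposite) side of $\rho$'' means precisely that $p'$ and $q'$ lie in the same (resp.\ in different) one of $L,R$.

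The crucial step will be the claim that no edge of $M$ crosses the interior of the chord $pq$, equivalently that every edge of $M$ has both endpoints in $\overline L$ or both in $\overline R$. To prove it I would suppose some $g=\{x,y\}\in M$ crosses the interior of $pq$, so $x\in L$ and $y\in R$ up to swapping; in particular $g\neq e,f$, since $e$ and $f$ each have an endpoint on $\rho$. Assuming first that $g$ does not pass through the center: the chord $g$ cuts the disk into two parts, the center lies in the larger one, and since $g$ separates $p$ from $q$, exactly one of $p,q$ — say $p$ — lies strictly inside the circular segment cut off by $g$ on the side away from the center. Then the ray from the center to $p$ meets $g$ in the relative interior of the segment $g$, so $p$ is hidden behind $g$, and since $g\neq e$ this contradicts the visibility of $e$. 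The only remaining possibility is that $g$ passes through the center, i.e.\ joins two antipodal points; such an edge can occur only for odd $n$, since deleting it from $M$ leaves $n-1$ points on each of its two sides, and those must be perfectly matched there, forcing $n-1$ to be even. By the convention that nothing is hidden behind a chord through the center, such an edge does not yield the contradiction above, so the ``odd $n$'' case is understood under the assumption (automatic whenever the lemma is applied) that $M$ has no such chord — its presence or absence is exactly what toggles the parity computed next.

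Granting the claim, the count is immediate: apart from $e$ and $f$, every edge of $M$ lies strictly inside $L$ or strictly inside $R$; if $u$ of them lie strictly inside $L$, they cover $2u$ of the $n-1$ points of $L$, and the only further points of $L$ that can be covered are $p'$ and $q'$, so $2u+[p'\in L]+[q'\in L]=n-1$. Reducing modulo $2$ gives $[p'\in L]+[q'\in L]\equiv n-1\pmod 2$. Hence for even $n$ the sum is odd, exactly one of $p',q'$ lies in $L$, and $e,f$ lie on opposite sides of $\rho$; and for odd $n$ (with $M$ free of central chords) the sum is even, $p'$ and $q'$ lie on the same side, and $e,f$ lie on the same side of $\rho$.

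The elementary planar fact used above — that a chord separates the center from every point strictly inside the circular segment it cuts off — and the parity bookkeeping are completely routine. The one delicate point, and the main obstacle, is the chord-through-the-center configuration: it arises only for odd $n$, and since nothing is deemed hidden behind such a chord the visibility hypothesis does not by itself forbid it, so it is exactly this configuration that makes the odd and even cases behave differently here rather than only in the statement.
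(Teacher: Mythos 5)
Your proof is correct and follows essentially the same route as the paper's one\nobreakdash-paragraph argument: visibility of $e$ and $f$ forces every other edge of $M$ to keep both endpoints on one side of $\rho$, and the parity of the $n-1$ points in each open half-disk then determines whether the second endpoints of $e$ and $f$ fall on the same side or not. Your explicit treatment of a possible chord of $M$ through the center is in fact more careful than the paper, which simply asserts that no edge crosses $\rho$; as you observe, that assertion (and the statement itself, taken literally) requires $M$ to contain no such central chord, a condition that does hold wherever the lemma is invoked in the paper.
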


\begin{proof}
As $e$ and~$f$ are both visible, no other edge from~$M$ crosses the line~$\rho$.
Also note that the number of points on both sides of~$\rho$ is the same.
Consequently, if $e$ and $f$ lie on the same side of~$\rho$, then the number~$n$ of matching edges must be odd, and if they lie on opposite sides of~$\rho$, then the number~$n$ of matching edges must be even.
\end{proof}

\section{Vertex degrees}
\label{sec:deg}

In this section we characterize the degrees of vertices in the graph~$\cH_n$ by properties of the corresponding matchings (Theorem~\ref{thm:deg}), which allows us to determine the maximum and minimum degree of the graph~$\cH_n$, and to give characterizations and counts for the corresponding matchings (Theorem~\ref{thm:max-deg} and~\ref{thm:min-deg}, respectively).  

\begin{theorem}
\label{thm:deg}
Consider a matching $M\in\cM_n$.
If $n$ is odd, then the number of centered flips in~$M$ equals the number of visible edges.
If $n$ is even, then the number of centered flips in~$M$ is at most the number of visible edges in~$M$ and at least half this number, and both of these bounds are tight.
\end{theorem}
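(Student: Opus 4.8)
The plan is to isolate some structure common to both parities, then split on the parity of~$n$ (and, within the odd case, on whether $M$ has an edge through the center).

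\textbf{Common preliminaries.} By Lemma~\ref{lem:centered} a flip is centered precisely when the four edge lengths of its quadrilateral sum to $n-2$. The first claim is that in any centered flip $\{e,f\}\to\{e',f'\}$ both $e$ and~$f$ are visible, with the single exception that for odd~$n$ one of them may be the edge through the center (which the visibility convention excludes). Indeed, if $e=ab$ were hidden behind an edge $g\ne e$, then $g$ is not a diameter, so $a,b$ lie in the strict minor arc of~$g$ while the circle center lies on the major side of~$g$; since the center lies in the closed quadrilateral $Q$ of the flip and $a,b$ are consecutive vertices of~$Q$, the chord~$g$ separates $\{a,b\}$ from the other two vertices $c,d$ of~$Q$ and hence crosses both new edges $e'=bc$ and $f'=da$ --- impossible, as $g,e',f'$ all lie in the non-crossing matching~$M'$ produced by the flip. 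In particular, since for even~$n$ no matching has an edge through the center (it would split the $2n$ points into two odd arcs), every centered flip for even~$n$ uses two visible edges. Next I would record the visibility structure: the visible edges, taken in cyclic order around the center, occupy pairwise disjoint angular wedges, and no point of the configuration lies strictly between two consecutive wedges except the two endpoints of the center edge when $n$ is odd and such an edge exists. Hence the $2n$ points partition (apart from those two center-edge endpoints, if any) into $m$ consecutive blocks $B_1,\dots,B_m$, one per visible edge~$v$, where the block of~$v$ consists of the two endpoints of~$v$ and the $2\ell(v)$ points of its minor arc, so has size $w(v)=2+2\ell(v)$; here $m$ is the number of visible edges.

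\textbf{Odd~$n$.} If $M$ has a center edge~$d$, write $M=M_1\cup\{d\}\cup M_2$ with $M_1,M_2$ non-crossing matchings on the two half-circles. Any centered flip must involve~$d$: a flip between two edges other than~$d$ does not cross~$d$ in~$M'$, hence lives inside one half-circle, so its quadrilateral lies strictly inside a half-disc and misses the center. The valid flips through~$d$ are exactly the $\{d,g\}$ with $g$ an edge of $M_1$ or $M_2$ not nested below another edge of the same half, and each such flip is centered (its quadrilateral has the diameter~$d$ as a side); these ``outermost'' edges of $M_1,M_2$ are precisely the visible edges of~$M$ (with $d$ itself non-visible), so the two counts agree. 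If $M$ has no center edge, then every centered flip uses two visible edges, and conversely any two visible edges $v_i,v_j$ span a valid flip (the arcs between their blocks are unions of whole blocks, hence self-matched), which by Lemma~\ref{lem:centered} is centered iff both arcs between $B_i$ and $B_j$ contain at most $n-1$ points. So the count reduces to a statement about a cyclic sequence of even weights $w_1,\dots,w_m\in\{2,\dots,n-1\}$ summing to $2n$: let $\phi(i)$ be the last block clockwise from $B_i$ for which the clockwise arc of points strictly between still has size $\le n-1$. Then (i) an arithmetic check, using that the next block overshoots $n-1$, shows $\{B_i,B_{\phi(i)}\}$ is always a centered flip; (ii) if $\phi(i)=j$ and $\phi(j)=i$ then one of the two arcs would have a size $A'$ with $n-1-w_i<A'<n+1-w_i$, an open interval of length~$2$ between consecutive even integers containing no even integer --- contradiction, so $i\mapsto\{B_i,B_{\phi(i)}\}$ is injective; and (iii) every centered flip $\{B_i,B_j\}$ equals $\{B_i,B_{\phi(i)}\}$ or $\{B_j,B_{\phi(j)}\}$ according to which of the two ``last'' blocks overshoots $n-1$. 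Hence there are exactly $m$ centered flips.

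\textbf{Even~$n$.} No matching has a diameter edge, so no centered flip creates one; thus every centered flip uses two visible edges and its quadrilateral contains the center in its interior, i.e.\ each of its four arcs has fewer than $n$ points. The same recipe with threshold $n-2$ in place of $n-1$ produces a map $i\mapsto\{B_i,B_{\phi(i)}\}$ whose image is exactly the set of centered flips (parts (i) and (iii) go through verbatim). It need not be injective now, but a two-element pair has at most two preimages, so the number of centered flips lies between $m/2$ and~$m$. Tightness is then witnessed by explicit matchings: the ``staircase'' matching (all edges on the perimeter) realizes the lower bound, while a matching all of whose visible edges have length~$1$, suitably many and evenly spaced, realizes the upper bound.

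\textbf{Main obstacle.} The crux is part~(iii), surjectivity of the $\phi$-construction, together with the realization that the correspondence between centered flips and visible edges is \emph{not} ``a flip paired with one of its two visible edges'': for odd~$n$ a single visible edge can lie in more than two centered flips, so the equality of counts is a genuinely global phenomenon rather than a vertex-by-vertex one. By comparison, the crossing lemma for visibility of the edges of a centered flip and the block decomposition are routine.
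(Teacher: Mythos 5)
Your argument is correct and is essentially the paper's proof in combinatorial clothing: both construct a map sending each visible edge to the centered flip with its extremal partner in a fixed rotational direction (your arc-threshold $\phi$ is the paper's ray-based $\tau$), prove it surjective onto the set of centered flips, and show it is injective for odd $n$ --- your parity argument on even arc sizes being exactly the content of Lemma~\ref{lem:side} --- and at most $2$-to-$1$ for even $n$, with the center-edge case of odd $n$ handled separately in both proofs. One small caveat: your upper-bound tightness witness for even $n$ (evenly spaced visible edges of length~$1$) yields $m$ centered flips only when $n\equiv 2\pmod 4$; for $n\equiv 0\pmod 4$ the admissible block distance degenerates to the single antipodal value and the construction realizes the lower bound instead --- though the paper is no more explicit here, and $n=6$ already witnesses tightness as stated.
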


\begin{proof}
We first assume that $n\geq 3$ is odd.
The statement is obvious if $M$ has an edge of length~$\mu$ through the circle center, as in this case every visible edge can be flipped (only) together with this longest edge.
Now suppose that there is no edge of length~$\mu$ in~$M$.
Consider the set~$E$ of all visible edges, and let $F$ be the set of all unordered pairs of edges from~$E$ that are flippable together in a centered flip.
For any edge~$e\in E$, let $p$ and~$q$ be the endpoints of~$e$ such that the circle center is to the right of the ray from~$p$ to~$q$, and let $\tau(e)$ be the visible edge from~$M$ different from~$e$ that intersects the ray starting at~$p$ through the circle center.
Consider the mapping $\varphi:E\rightarrow F$ defined by $\varphi(e):=\{e,\tau(e)\}$; see Figure~\ref{fig:deg}.
First note that $e$ and $\tau(e)$ are flippable together in a centered flip, so $\varphi(e)$ is indeed a pair from~$F$.
We proceed to show that $\varphi$ is a bijection, which will prove the first part of theorem.
Indeed, for any pair $\{e,f\}\in F$ of visible edges that are flippable together in a centered flip, one can verify directly that $f=\tau(e)$ or $e=\tau(f)$ (or both), so the pair $\{e,f\}$ appears in the image of~$\varphi$, proving that $\varphi$ is surjective.
It remains to show that $\varphi$ is injective.
Suppose for the sake of contradiction that there are two distinct edges $e,f\in E$ with $\varphi(e)=\varphi(f)$, which means that $f=\tau(e)$ and $e=\tau(f)$.
This implies that there is a line~$\rho$ through the circle center and an endpoint of each of the two edges~$e$ and~$f$, such that $e$ and~$f$ lie on opposite sides of~$\rho$, contradicting Lemma~\ref{lem:side}.
Note that the assumption of $n$ being odd was only used in proving that $\varphi$ is injective.

\begin{figure}
\includegraphics{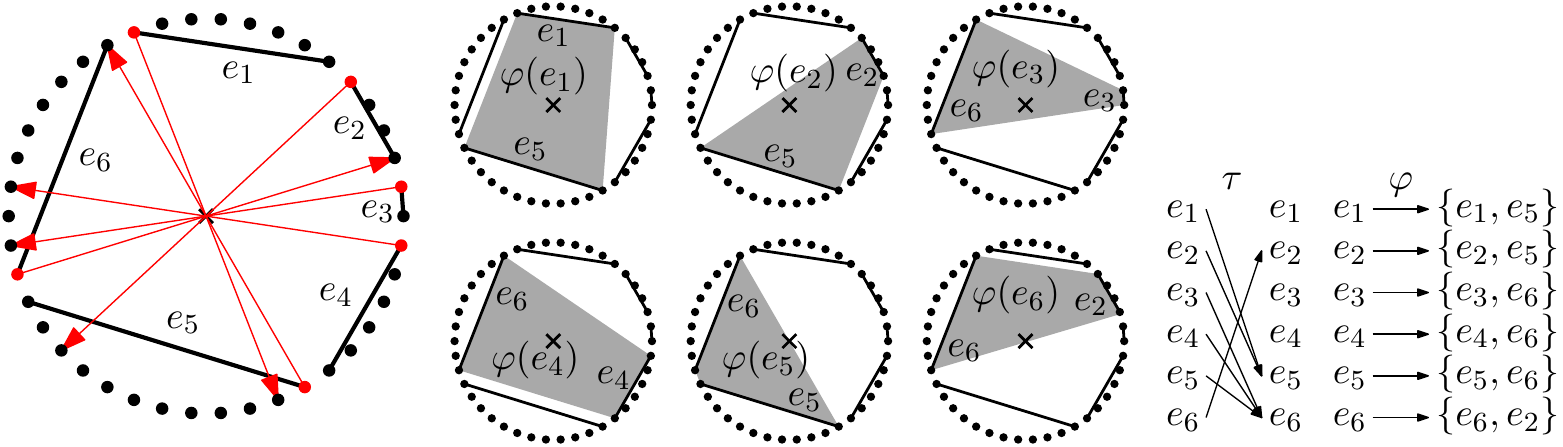}
\caption{
Illustration of the proof of Theorem~\ref{thm:deg} in the case when $n$ is odd.
The arrows indicate the rays that define the mapping~$\tau$.
The gray quadrilaterals show all possible centered flips in the given matching.
}
\label{fig:deg}
\end{figure}

We now assume that $n\geq 2$ is even.
Consider the mapping~$\varphi$ between visible edges~$E$ of~$M$ and flips~$F$ in~$M$ defined as before.
This mapping is surjective, as shown before, proving that the number of flips in~$M$ is at most the number of visible edges.
Even though the mapping~$\varphi$ may not be injective, there are at most two edges~$e$ and~$f$ mapping to the same pair $\{e,f\}\in F$ (if and only if $f=\tau(e)$ and $e=\tau(f)$).
This shows that the number of flips in~$M$ is at least half the number of visible edges.
Examples where these bounds are tight can be easily constructed; see Figure~\ref{fig:h6}.
\end{proof}

We now apply Theorem~\ref{thm:deg} to characterizing and counting vertices of maximum degree in the graph~$\cH_n$.

\begin{theorem}
\label{thm:max-deg}
For odd~$n\geq 3$, the graph~$\cH_n$ has maximum degree~$n$, and there are exactly two vertices of this degree, given by the two matchings that have only perimeter edges.

For even~$n\geq 2$, the graph~$\cH_n$ has maximum degree~$n/2$.
\end{theorem}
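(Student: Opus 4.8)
The plan is to read off both statements from Theorem~\ref{thm:deg}, after isolating one structural identity for the number of visible edges.

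\emph{The odd case.} By Theorem~\ref{thm:deg} we have $\deg_{\cH_n}(M)=\#\{\text{visible edges of }M\}\le |M|=n$, so the maximum degree is at most $n$, and equality forces every edge of $M$ to be visible. I would show that this happens exactly when $M$ uses only perimeter edges: perimeter edges are visible, while if $e\in M$ has $\ell(e)\ge 1$ then every matching edge on the shorter side of $e$ has both endpoints hidden behind $e$ (the segment from the circle center to any point on the shorter side of $e$ crosses $e$), hence is not visible; and an edge of length $\mu$ through the center is by definition not visible. Since there are exactly two matchings consisting only of perimeter edges (the two perfect matchings of the cycle through the $2n$ points), and each of them has all $n$ edges visible, the maximum degree is $n$ and it is attained by precisely these two matchings.

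\emph{The even case: setup and lower bound.} Write $E$ for the set of visible edges of $M$ and $L:=\sum_{e\in E}\ell(e)$. I would first prove the identity $|E|=n-L$: every edge of $M$ is either visible or hidden behind a \emph{unique} visible edge (the minor arcs of distinct visible edges are pairwise disjoint, and following the ``hidden behind'' relation always terminates at a visible edge), and a visible edge $e$ has exactly $\ell(e)$ matching edges hidden behind it — those inside its minor arc — so the blocks $\{e\}\cup\{\text{edges hidden behind }e\}$ of sizes $\ell(e)+1$ partition $M$, giving $\sum_{e\in E}(\ell(e)+1)=n$. For the lower bound, either all-perimeter matching has $|E|=n$, and Lemma~\ref{lem:centered} together with a short computation of arc lengths shows its centered flips are exactly the $n/2$ pairs of diametrically opposite perimeter edges (no other pair of perimeter edges spans a quadrilateral whose four edge-lengths sum to $n-2$); hence it has degree $n/2$, cf.\ Figure~\ref{fig:h6}.

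\emph{The even case: upper bound.} This is the core. Since $n$ is even there are no diameter edges, so the map $\tau$ from the proof of Theorem~\ref{thm:deg} has no fixed points, and $\varphi\colon E\to F$, $\varphi(e)=\{e,\tau(e)\}$, onto the set $F$ of centered flips is surjective with every fibre of size $1$ or $2$, a fibre over $\{e,f\}$ having size $2$ exactly when $\{e,f\}$ is a $2$-cycle of $\tau$; therefore $\deg_{\cH_n}(M)=|F|=|E|-c$, where $c$ is the number of $2$-cycles of $\tau$. Together with $|E|=n-L$ it remains to prove $L+c\ge n/2$. For this I would pass to ``index space'': the antipodal involution identifies the $2n$ points with a cyclic set $Z$ of $n$ positions such that the minor arcs of the visible edges $e_1,\dots,e_t$ ($t=|E|$) cut out $t$ consecutive intervals $I_1,\dots,I_t$ of sizes $\ell(e_1)+1,\dots,\ell(e_t)+1$ partitioning $Z$, with the tail endpoint $p_{e_i}$ sitting at the left end $m_i$ of $I_i$ and $\tau(e_i)=e_j$ iff $m_i+n/2\in I_j$. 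I then distribute the $n/2$ antipodal pairs $\{x,x+n/2\}$ of $Z$ over the $L$ non-left-ends (``length units'') and the $c$ many $2$-cycles of $\tau$: if neither of $x,x+n/2$ is a left end, charge the pair to one of them; if exactly one is not a left end, charge to it; if both are left ends $m_i,m_j$, then $m_i+n/2=m_j\in I_j$ and $m_j+n/2=m_i\in I_i$, so $\{e_i,e_j\}$ is a $2$-cycle of $\tau$ and we charge the pair to it. Every pair falls into exactly one case, each length unit is charged at most once, and pairs in the third case are assigned injectively to $2$-cycles, so $n/2\le L+c$, whence $\deg_{\cH_n}(M)=n-L-c\le n/2$; with the example above this gives maximum degree $n/2$. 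The main obstacle I anticipate is the clean construction of the index space — in particular showing that cyclically consecutive visible edges are separated by exactly one ``gap'' unit arc (which rests on the central region being convex, so that no arc of its boundary contains an interior point of the $2n$-point set), forcing all tails to the same parity class $Z$; once that is in place the charging is routine.
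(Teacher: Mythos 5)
Your proposal is correct. The odd case is handled exactly as in the paper, namely as an immediate consequence of the first part of Theorem~\ref{thm:deg}, and the details you supply about which matchings have all $n$ edges visible are right.

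For the even upper bound you arrive at the same inequality as the paper but by a different accounting. Writing $c$ for the number of $2$-cycles of $\tau$ and $H$ for the set of hidden edges, both arguments reduce the bound $\deg(M)=|E|-c\le n/2$ to the single inequality $|H|+c\ge n/2$: your $L=\sum_{e\in E}\ell(e)$ equals $|H|$ by the segment decomposition $\sum_{e\in E}(\ell(e)+1)=n$, while the paper's $|E\setminus E'|+2|E'|\le n$ is the same statement after substituting $|E\setminus E'|=2c$. The paper proves it by assigning to each visible edge $e$ with a singleton $\varphi$-fibre a pair of \emph{points} hidden behind $\tau(e)$ (the antipode $p^*$ of the tail of $e$ and its neighbour $q$) and checking disjointness; the delicate step there is that $q$ is hidden, which needs a hypothetical non-centered flip and Lemma~\ref{lem:side}. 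You instead charge the $n/2$ antipodal pairs of the tails' parity class to length units or to $2$-cycles. This works: the minor arcs of the visible edges tile the $2n$ points into consecutive blocks of even sizes $2(\ell(e_i)+1)$, so all tails lie in one parity class, that class is preserved by the antipodal map (as $n$ is even), $\tau(e_i)=e_j$ iff $m_i+n/2\in I_j$, and your three cases are exhaustive with each length unit charged at most once and the case-C pairs injecting into $2$-cycles. Your route avoids the paper's ``second hidden point'' step at the price of the index-space setup. Two wording issues, neither fatal: $Z$ must be \emph{one parity class} of the $2n$ points (as you say at the end), not the quotient by the antipodal involution, since you need the involution to act on $Z$ fixed-point-freely; and the reason the blocks tile the circle is simply that every point is an endpoint of exactly one edge and every edge lies in exactly one segment $M_e$, not convexity of the central region. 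You also verify attainment of the bound via the all-perimeter matching, which the paper leaves implicit.
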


It seems to be considerably harder to count the number of vertices of maximum degree in~$\cH_n$ for even~$n$.
With computer help, we determined that these numbers are $2,10,54,274,1326,6218,28538$ for $n=2,4,6,8,10,12,14$, and they seem to defy a straightforward combinatorial interpretation.

\begin{proof}
The result for odd~$n\geq 3$ is an immediate consequence of the first part of Theorem~\ref{thm:deg}.

Now assume that $n\geq 2$ is even and consider a matching~$M\in\cM_n$.
Let $E$ be the set of visible edges of~$M$, and let $F$ be the set of unordered pairs of edges from~$E$ that are flippable together in a centered flip.
Also, define the mapping $\varphi:E\rightarrow F$ by $\varphi(e):=\{e,\tau(e)\}$ as in the proof of Theorem~\ref{thm:deg}.
We argued before that $\varphi$ is surjective, and that at most two edges $e,f\in E$ are mapped to the same pair $\{e,f\}\in F$.
We now consider the set $E'\seq E$ of edges for which the image under~$\varphi$ is unique.
Clearly, the total number of flips in~$M$ is
\begin{equation}
\label{eq:Fsize}
|F|=|E\setminus E'|/2+|E'|=(|E\setminus E'|+2|E'|)/2.
\end{equation}

Moreover, let $H$ be the set of hidden edges in~$M$, i.e., we have
\begin{equation}
\label{eq:msum}
n=|E\setminus E'|+|E'|+|H|.
\end{equation}

\begin{figure}
\includegraphics{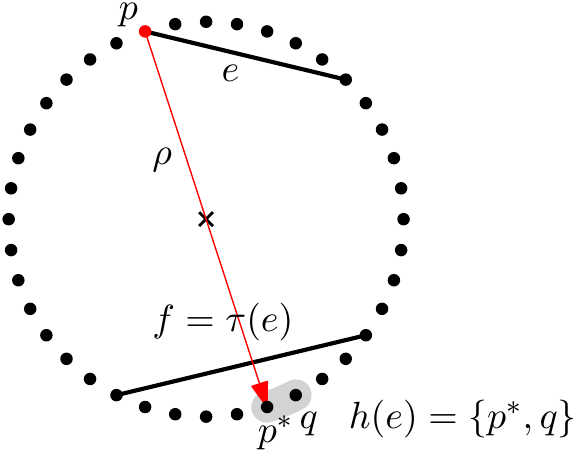}
\caption{
Illustration of the proof of Theorem~\ref{thm:max-deg} in the case when $n$ is even.
The arrow indicates the definition of the point~$p^*$.
The two points in the set $h(e)=\{p^*,q\}$ are shaded.
}
\label{fig:hidden}
\end{figure}

Now consider an edge $e\in E'$ and the pair $\{e,f\}\in F$ with $f=\tau(e)$ and $e\neq \tau(f)$, and let $\rho$ be the corresponding line starting at an endpoint~$p$ of~$e$ and intersecting~$f$, and let $p^*$ be the antipodal point to~$p$ on the circle; see Figure~\ref{fig:hidden}.
As a consequence of $e\neq \tau(f)$ and Lemma~\ref{lem:side}, the point~$p^*$ is hidden behind~$f$.
Let $q$ be the point next to~$p^*$ in counterclockwise direction, and define $h(e):=\{p^*,q\}$.
The point~$q$ must also be hidden behind~$f$.
If not, then it would have to be an endpoint of~$f$, and then flipping $f$ together with the edge ending at~$p^*$ in a \emph{non-centered} flip would produce a matching for which the line~$\rho$ violates Lemma~\ref{lem:side}.
Observe also that $h(e)$ and~$h(e')$ are disjoint sets for any two distinct edges $e,e'\in E'$, implying that
\begin{equation}
\label{eq:Hbound}
|H|\geq |E'|.
\end{equation}
For this argument it is irrelevant whether the two points in $h(e)$, $e\in E'$, are joined by an edge from~$H$ or not.

Plugging~\eqref{eq:Hbound} into~\eqref{eq:msum} shows that $|E\setminus E'|+2|E'|\leq n$, and using this estimate in~\eqref{eq:Fsize} proves that $|F|\leq n/2$, as desired.
\end{proof}

We conclude this section by characterizing and counting vertices of minimum degree in the graph~$\cH_n$.

\begin{theorem}
\label{thm:min-deg}
For odd~$n\geq 3$, the graph~$\cH_n$ has minimum degree~2, and there are exactly $n\cdot(C_{(n-3)/2})^2$ vertices of this degree, given by matchings that contain one edge of length~$\mu$ through the circle center, and two edges of length~$\mu-1$.

For even~$n\geq 2$, the graph~$\cH_n$ has minimum degree~1, and there are exactly $n\cdot(C_{(n-2)/2})^2$ vertices of this degree, given by matchings that contain exactly two edges of length~$\mu$.
\end{theorem}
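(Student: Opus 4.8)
The plan is to read vertex degrees off Theorem~\ref{thm:deg} and then classify the matchings whose set of visible edges is as small as possible. Throughout write $E=E(M)$ for the visible edges of $M$ and $F=F(M)$ for its centered flips, and use the maps $\tau$ and $\varphi\colon E\to F$, $\varphi(e)=\{e,\tau(e)\}$, from the proof of Theorem~\ref{thm:deg}. First I would record the lower bounds. Every $M\in\cM_n$ has a perimeter edge, and a perimeter edge is always visible (nothing lies in the empty arc it cuts off), so $|E|\geq 1$. If $e$ were the only visible edge, then chasing the ``hidden behind'' relation (a strict partial order on edges) shows every other edge has both endpoints hidden behind $e$ and hence lies on the far side of $e$; then $e$ has all $n-1$ other edges on one side and none on the other, so $\ell(e)=0$ and $e$ is a perimeter edge, which can have nothing hidden behind it --- a contradiction for $n\geq 2$. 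Hence $|E|\geq 2$, so by Theorem~\ref{thm:deg} the degree of $M$ in $\cH_n$ is at least $2$ for odd $n$ and at least $1$ for even $n$; the extremal matchings constructed below show both bounds are attained.

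\emph{Odd $n$.} Here the degree equals $|E|$, so I must classify matchings with $|E|=2$. If $M$ has no edge of length $\mu$ through the center, then $\varphi$ is a bijection (proof of Theorem~\ref{thm:deg}); but if $E=\{e,f\}$ then $\tau(e)$ is a visible edge other than $e$, so $\tau(e)=f$, and likewise $\tau(f)=e$, whence $\varphi(e)=\varphi(f)=\{e,f\}$, contradicting injectivity. So $M$ contains a center edge $c$. Then $c$ splits the point set into two arcs of $n-1$ points each, visibility of the edges other than $c$ is decided inside the two arcs, and each arc contributes at least one visible edge (any edge not nested inside another arc edge), with equality exactly when the arc contains the edge joining its two extreme points. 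A short computation gives those two ``spanning'' edges length $\mu-1$; conversely, given a center edge, two edges of length $\mu-1$ must be these spanning edges, since each covers $n-3$ of the $n-1$ points of an arc and hence joins its extremes. Choosing $c$ in $n$ ways (a matching has at most one center edge) and a free non-crossing matching of the $n-3$ points inside each spanning edge in $C_{(n-3)/2}$ ways each gives exactly $n\cdot(C_{(n-3)/2})^2$ matchings of degree~$2$.

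\emph{Even $n$.} Now $\varphi$ is surjective and at most $2$-to-$1$, so a matching of degree $1$ has $|F|=1$, hence $|E|\leq 2$, hence $|E|=2$ by the lower bound; writing $E=\{e,f\}$, one gets $\tau(e)=f$ and $\tau(f)=e$. As in the injectivity argument of the proof of Theorem~\ref{thm:deg}, this produces a diameter $\rho$ through one endpoint of $e$ and one endpoint of $f$ with $e$ and $f$ on opposite sides; since $e,f$ are visible, no matching edge crosses $\rho$, so $\rho$ separates the remaining $2n-2$ points into two sets of $n-1$. Next I would use $|E|=2$ to pin the configuration down: if, on the far side of $e$, there were any point other than those nested directly inside $e$, the outermost edge among such points would be a third visible edge; ruling this out (and symmetrically for $f$) forces $e$ and $f$ to have length $\mu$, and shows the whole matching is determined by a cyclic rotation together with two free non-crossing matchings on $n-2$ points each. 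Conversely, any matching with exactly two edges of length $\mu$ is of this form: two length-$\mu$ edges cover disjoint point sets (otherwise one covers the other and so has more than $\mu$ edges on one side) and are then the only visible edges, so $|E|=2$ and the degree is $1$ by Theorem~\ref{thm:deg}. The count $2n\cdot(C_{(n-2)/2})^2$ over this data over-counts each matching exactly twice --- the involution exchanging $e$ with $f$ also exchanges the two free matchings --- giving $n\cdot(C_{(n-2)/2})^2$.

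\emph{Main obstacle.} The delicate step is the structural analysis of matchings with exactly two visible edges: converting the combinatorial relation ``$\tau(e)=f$ and $\tau(f)=e$'' into the explicit geometric picture (the diameter $\rho$, the fact that no edge crosses it, and that $e$ and $f$ hug $\rho$ with endpoints adjacent to its two poles), while correctly handling the distinct role played by a center edge when $n$ is odd versus its absence when $n$ is even. A secondary subtlety that must be justified rather than assumed is the factor-$2$ over-count in the even case.
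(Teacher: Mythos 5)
Your proposal is correct and follows essentially the same route as the paper: reduce to counting visible edges via Theorem~\ref{thm:deg}, show that having exactly two visible edges forces the stated extremal configurations (center edge plus two arc-spanning edges for odd $n$; two antipodal length-$\mu$ edges for even $n$), and count these by rotations times two independent Catalan factors. The only local difference is that where the paper excludes the non-extremal cases by bounding the length sum of the flip quadrilateral against Lemma~\ref{lem:centered}, you instead use the injectivity/2-to-1 behaviour of the map $\varphi$ together with the antipodal point count across the diameter $\rho$ (Lemma~\ref{lem:side}); the two arguments are equivalent in substance, and your accounting of the factor-$2$ overcount in the even case is a correct (and slightly more explicit) version of what the paper leaves implicit.
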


\begin{proof}
First suppose that $n\geq 3$ is odd.
Clearly, any matching has at least two visible edges, so the first part of Theorem~\ref{thm:deg} shows that the minimum degree of~$\cH_n$ is~2.
Moreover, degree~2 is attained for exactly those matchings that have exactly two visible edges.
It remains to show that matchings with exactly one edge of length~$\mu$ through the circle center, and two edges of length~$\mu-1$, are the only ones that have exactly two visible edges.
This will prove the characterization of minimum degree vertices in the theorem.
The counting formula follows immediately, by observing that such a matching, apart from the three longest edges, consists of two independent matchings with $(n-3)/2$ edges each.
The $n$ possible rotations contribute the factor~$n$ in the counting formula.

Observe that if~$M$ does not contain an edge through the circle center, then it has at least three visible edges.
Indeed, any line between two antipodal points on the circle will touch at least two distinct edges~$e$ and~$f$ that are both visible and flippable together in a centered flip.
If these are the only visible edges, then there are no other points that are not endpoints of~$e$ or~$f$ or not hidden behind one of them.
As neither~$e$ nor~$f$ goes through the circle center, each of these two edges has length at most~$\mu-1$.
It follows that the sum of the four edge lengths of the quadrilateral involved in this flip is at most $2(\mu-1)=2((n-1)/2-1)=n-3$ (recall~\eqref{eq:mu}), contradicting Lemma~\ref{lem:centered}, which says that this sum is $n-2$ for a centered flip.

On the other hand, if~$M$ has an edge of length~$\mu$ through the circle center, then on each side of this edge at least one edge from~$M$ is visible, and equality occurs exactly if these two edges have length~$\mu-1$ each.

Now suppose that $n\geq 2$ is even.
Any matching has at least two visible edges, so the second part of Theorem~\ref{thm:deg} shows that the minimum degree of~$\cH_n$ is~1.
Moreover, degree~1 is attained exactly for those matchings that have exactly two visible edges.
It remains to show that matchings with exactly two edges of length~$\mu$ are the only ones that have exactly two visible edges.
This will prove the characterization of the minimum degree vertices and the counting formula.

Note that as $n$ is even, no matching edge can go through the circle center, otherwise there would be the same number of remaining edges on each of its two sides, which would make the total number of edges odd.
We show that if no edge goes through the circle center, and if the matching does not contain two edges of length~$\mu$, then the matching has at least three visible edges, making the degree of the corresponding vertex in~$\cH_n$ at least $\lceil 3/2\rceil=2$ by the second part of Theorem~\ref{thm:deg}. 
Similarly to before, by considering a line between two antipodal points on the circle, we obtain two visible and flippable edges~$e$ and~$f$.
The sum of the lengths of~$e$ and~$f$ is at most $2\mu-1$, by the assumption that the matching does not contain two edges of length~$\mu$.
However, these two edges would span a quadrilateral whose sum of edge lengths is at most $2\mu-1=2(n-2)/2-1=n-3$ (recall~\eqref{eq:mu}), contradicting Lemma~\ref{lem:centered}.

This completes the proof of the theorem.
\end{proof}

\section{Connectedness and diameter for odd~$n$}
\label{sec:odd}

In this section, we assume that the number~$n$ of matching edges is odd.
We prove that the graph~$\cH_n$ is connected in this case (Theorem~\ref{thm:conn}), and that its diameter is linear in~$n$ (Theorem~\ref{thm:diam}).

\begin{theorem}
\label{thm:conn}
For odd $n\geq 3$, the graph~$\cH_n$ is connected.
\end{theorem}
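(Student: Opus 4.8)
The plan is to show that from every matching $M\in\cM_n$ one can reach a fixed canonical matching by a sequence of centered flips; then any two matchings are connected via this canonical one. For odd $n$ there is a natural choice of target: one of the two matchings $M^\star$ consisting only of perimeter edges (the maximum-degree vertices from Theorem~\ref{thm:max-deg}). So it suffices to prove that every $M$ can be transformed into $M^\star$ by centered flips. I would measure progress by a potential function, the most natural being the number of edges of length~$\mu$, or more finely the multiset of edge lengths; $M^\star$ is characterised by having all edges of length~$0$. Since $n$ is odd, $M$ always has a unique edge of length~$\mu$, which passes through or straddles the circle center, and by Theorem~\ref{thm:deg} every visible edge can be flipped (together with the center edge or, in general, with $\tau(e)$); the key is to choose a centered flip that strictly decreases the potential.

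The main step is a \emph{length-reduction lemma}: if $M\neq M^\star$, then $M$ admits a centered flip after which the multiset of edge lengths has strictly decreased in a suitable order (say, reverse-lexicographically, or simply: the number of edges attaining the current maximum length present drops, eventually forcing all lengths to~$0$). To find such a flip, I would look at a longest edge $e$ that is \emph{not} a perimeter edge and consider it together with $\tau(e)$, or consider the two edges $e,f$ touched by a suitable antipodal line $\rho$ chosen so that $e$ and $f$ are ``extremal'' on their sides. Lemma~\ref{lem:centered} tells us the four lengths around the flipped quadrilateral sum to $n-2$; after the flip, the two new edges are the ``crossing'' pair of the quadrilateral, whose lengths are determined by how the interior edges are split. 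By choosing $e$ to be an innermost long edge (so that few matching edges lie strictly between $e$ and the center on the short side), one arranges that the flip pushes $e$ and its partner outward, reducing their lengths while not creating any new edge of length exceeding the old maximum. Running this repeatedly drives every edge length to~$0$, reaching $M^\star$.

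Concretely, here is the order of steps I would carry out. (1) Fix $M^\star$, the all-perimeter matching ``aligned'' so that the center edge of a generic $M$ matches up; note there are two such matchings and they are themselves joined by a short centered-flip path, so it does not matter which we target. (2) Prove the length-reduction lemma above: pick the unique length-$\mu$ edge $g$ through/straddling the center; on one side of $g$, pick the visible edge $e\neq g$ of maximum length; flip $e$ with $\tau(e)$ (which is $g$ when $e$ is adjacent to $g$, and otherwise another visible edge); verify via Lemma~\ref{lem:centered} that this flip is centered and that it strictly decreases the chosen potential. (3) Conclude by induction on the potential that every $M$ reaches $M^\star$, hence $\cH_n$ is connected. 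The hard part will be step~(2): one must choose the flip carefully so that it is guaranteed to exist, is centered, and genuinely reduces the potential without inadvertently lengthening some other edge — in particular handling the edge through the center and the visibility bookkeeping (using Lemma~\ref{lem:side}) cleanly. I expect the argument to be smoothest if the potential is taken to be $\sum_{e\in M}\ell(e)$ or the sorted length vector, and if ``which visible edge to flip'' is pinned down by always taking an \emph{innermost} non-perimeter edge on a fixed side of the center edge, so that the flip provably moves it one step toward the perimeter.
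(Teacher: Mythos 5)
There is a genuine gap, and it sits exactly at your step~(2), the single-flip ``length-reduction lemma''. It is false that every matching other than the two all-perimeter matchings admits a centered flip that decreases $\sum_{e\in M}\ell(e)$ (or the sorted length vector, or the number of edges of maximum length). Since only the two flipped edges change their lengths, a centered flip changes $\sum_e\ell(e)$ by exactly $(n-2)-2\bigl(\ell(e)+\ell(f)\bigr)$, so a decreasing flip requires a flippable visible pair with $\ell(e)+\ell(f)\geq\mu$; such a pair need not exist. Concretely, for $n=5$ take the matching $M=\bigl\{\{1,4\},\{2,3\},\{5,6\},\{7,8\},\{9,10\}\bigr\}$: its four centered flips are $\{1,4\}$ with each of $\{5,6\},\{7,8\},\{9,10\}$, and $\{5,6\}$ with $\{9,10\}$, and the removed pairs have length sums $1,1,1,0\leq\mu-1$, so \emph{every} centered flip strictly increases the total length (and increases the sorted length vector). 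Hence your greedy induction on the potential gets stuck at local minima other than $M_0,M_0'$, and no choice of ``innermost long edge'' rescues a one-flip argument. Relatedly, your structural premise is wrong: an odd-$n$ matching need not contain an edge of length $\mu$ at all (the example above, or $M_0$ itself), so ``pick the unique length-$\mu$ edge through the center'' is not available. This is precisely why the paper's proof does not work flip-by-flip with a length potential: its key lemma (Lemma~\ref{lem:4flips}) uses a \emph{sequence of up to four} centered flips, allows intermediate matchings to get ``worse'', and measures progress by the number of \emph{visible} edges, which it increases until one of $M_0,M_0'$ is reached.

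A second, smaller issue: you assert in step~(1) that the two all-perimeter matchings are joined by a short centered-flip path, but this is not obvious and in the paper it requires a separate argument (starting from the matching of $n$ parallel edges and mirroring a flip sequence along its long edge to connect $M_0$ with $M_0'$). If you want to salvage your outline, you would need to replace the one-flip reduction by a multi-flip lemma in the spirit of Lemma~\ref{lem:4flips} (e.g.\ decompose a longest edge together with the edges hidden behind it), and supply an explicit path between $M_0$ and $M_0'$.
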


The proof of Theorem~\ref{thm:conn} is based on the following key lemma.
We consider two special matchings, namely those that have only perimeter edges, and we denote them by~$M_0$ and~$M_0'$; see Figure~\ref{fig:adidas}.

\begin{lemma}
\label{lem:4flips}
Consider a matching~$M\in\cM_n$ that has no edge through the circle center and that is different from~$M_0$ and~$M_0'$, i.e., $M$ has an edge of length strictly more than~0.
There is a sequence of at most~4 centered flips from~$M$ to another matching that has no edge through the circle center and that has at least one more visible edge than~$M$.
\end{lemma}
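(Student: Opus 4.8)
The plan is to prove Lemma~\ref{lem:4flips} by exhibiting an explicit short sequence of centered flips that increases the number of visible edges, arguing by a local analysis near an edge of near-maximal length. Since $M\neq M_0,M_0'$ and $M$ has no edge through the center, $M$ contains an edge $e$ of length $\ell(e)\geq 1$; I would pick $e$ to be an edge of \emph{maximum} length $\ell$ in~$M$ that is visible from the center (such an edge exists: walking a ray from the center outward, the first edge it meets on either side is visible, and among visible edges a longest one has length $\geq 1$ unless all visible edges are perimeter edges, a case I will handle separately). The edge $e$ cuts off a region containing $\ell$ other edges of $M$, which themselves form a non-crossing matching, and since $e$ is visible and not through the center, the center lies on the ``big'' side of $e$.

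First I would set up the combinatorial picture: label the $2n$ points cyclically, and describe the edges that are ``just inside'' $e$ on the small side, together with the edges flanking $e$ on the big side. The key geometric fact to use repeatedly is Lemma~\ref{lem:centered} (a flip is centered iff the four edge lengths sum to $n-2$) and Lemma~\ref{lem:side} (for odd $n$, two visible edges whose endpoints are antipodal lie on the same side of the diameter). Because $n$ is odd, there is a unique edge or pair of candidate edges ``closest to being through the center''; I would use this to argue that $e$ together with an appropriate neighbor spans an empty quadrilateral containing the center, so the flip is available. After performing that centered flip, one of the two new edges has strictly larger length than $e$ (it now encloses $e$'s old region plus at least one more edge), so the maximum visible length has increased — but I must check that the number of \emph{visible} edges genuinely went up, not just the maximum length.

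The heart of the argument is a careful case distinction on the configuration of the (at most four) edges surrounding the relevant quadrilateral, together with a potential/measure argument showing the process terminates in at most $4$ flips: each centered flip either directly creates a new visible edge, or strictly increases $\ell(e)$ while keeping $e$ visible and keeping the center off $e$, and $\ell(e)$ can increase at most a bounded number of times before $e$ reaches length $\mu-1$ (the largest length possible without passing through the center), at which point the next centered flip necessarily exposes a previously hidden edge. I would tune the case analysis so that the ``increase $\ell(e)$'' steps happen at most three times and the final ``expose a new visible edge'' step is the fourth; the special subcase where $M$ has no visible edge of positive length (all visible edges are perimeter edges but $M\neq M_0,M_0'$, so some \emph{hidden} edge has positive length) should be reducible by one or two perimeter-edge flips to the generic case, and I would verify the bookkeeping fits within the budget of $4$.

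The main obstacle I anticipate is controlling the visibility count precisely: a centered flip replaces two edges of a quadrilateral with the other two, and while it is easy to track how the maximum length changes, the set of visible edges can change in several places at once — edges hidden behind the old $e$ may become visible, and edges previously visible next to $e$ may become hidden behind the new long edge. So the delicate part is proving that the \emph{net} change in the number of visible edges is at least $+1$ after at most four flips, which forces a genuinely geometric case analysis of the local neighborhood of the quadrilateral (how many of its vertices are endpoints of visible versus hidden edges, and whether the center sits strictly inside or, for odd $n$, possibly on the boundary). I would organize this as two or three lemmas about a single centered flip — one describing when a flip strictly increases $\ell(e)$ without creating a center-edge, one describing when it must expose a new visible edge — and then chain them, using the parity of $n$ crucially via Lemma~\ref{lem:side} to rule out the configurations that would otherwise allow the length to stall or oscillate.
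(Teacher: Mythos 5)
There is a genuine gap: your argument never actually delivers the constant bound of four flips, which is the whole content of the lemma. Your proposed potential is ``each flip either creates a new visible edge or strictly increases $\ell(e)$,'' and you note that $\ell(e)$ can grow until it reaches $\mu-1$; but starting from a longest edge of length possibly $1$, that gives on the order of $n$ length-increasing steps, not three, and the step where you ``tune the case analysis so that the increase-$\ell(e)$ steps happen at most three times'' is exactly the missing idea, not a routine verification. Moreover the claim that after one centered flip ``one of the two new edges has strictly larger length than $e$'' is not justified and is false in general: by Lemma~\ref{lem:centered} the four lengths of the flip quadrilateral sum to $n-2$, so the lengths of the two new edges are determined by which partner you flip $e$ with, and they can both be shorter. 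You also correctly identify the delicate point (controlling the \emph{net} change in the number of visible edges, since the new long edge can hide previously visible edges) but leave it unresolved. Finally, the special case you set aside is vacuous: an edge hidden behind a visible edge $f$ is strictly shorter than $f$, so a maximum-length edge is automatically visible; worrying about ``all visible edges are perimeter edges while some hidden edge is longer'' signals that the basic structural facts underpinning the construction have not been pinned down.

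For comparison, the paper's proof obtains the bound $4$ by an explicit construction rather than a monotonicity argument. It fixes a longest edge $a$ (necessarily visible), lets $X$ be the points hidden behind $a$ and $X^*$ its antipodal reflection, and shows, using that $a$ is longest together with Lemma~\ref{lem:side}, that some visible edge $a'$ has an endpoint $r\in X^*$. It then performs a concrete sequence of three or four centered flips (first $a$ with $a'$, then two or three further flips guided by the line through $r$ and its antipode $r^*\in X$, possibly passing through an intermediate matching with an edge through the center, which the lemma permits). The net effect is that $a$ and one or two edges that were hidden behind $a$ are replaced by strictly shorter edges, at least two of which are visible, while $a'$ is restored; hence the visible count increases by at least one after at most four flips. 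The case split (whether the line $\rho$ through $r,r^*$ crosses a matching edge after the first flip) is what caps the sequence at $3$ or $4$ flips — this is the mechanism your sketch lacks.
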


\begin{proof}
We fix a longest edge~$a$ in~$M$.
By the assumptions of the lemma, $a$ does not go through the circle center, and it must be visible in~$M$; see Figure~\ref{fig:4flips}~(a).

\begin{figure}
\makebox[0cm]{ 
\includegraphics{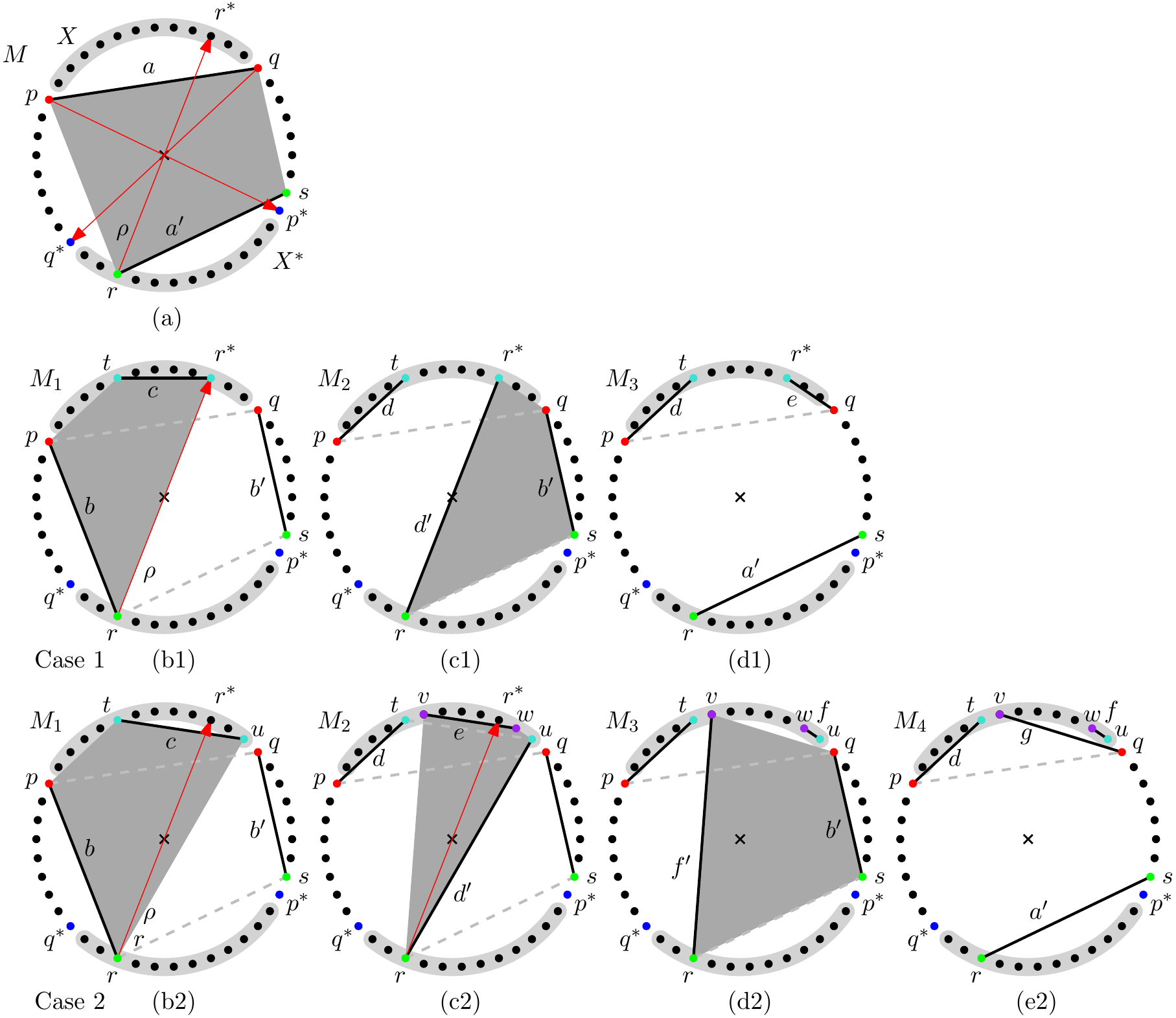}
}
\caption{Illustration of the proof of Lemma~\ref{lem:4flips}.
The dark-gray quadrilaterals indicate flips, and the light-gray circular arcs highlight the points sets~$X$ and~$X^*$.
If one of the edges~$a$ or~$a'$, which are both present in~$M$, is not present in one of the other matchings, then it is drawn as a dashed gray line in those matchings.
}
\label{fig:4flips}
\end{figure}

Let $p$ and~$q$ be the endpoints of the edge~$a$, such that the circle center is to the right of the ray from~$p$ to~$q$.
Moreover, let~$X$ be the set of points hidden behind the edge~$a$, and let $p^*$, $q^*$, and $X^*$ be the points or point sets, respectively, obtained by reflecting $p$, $q$, and~$X$ at the circle center.
We first argue that there is a matching edge $a'$ in~$M$ that is visible from the circle center and that has at least one endpoint in the set~$X^*$.
First of all, as $a$ is a longest edge, there is no edge in~$M$ such that $X\cup\{p,q\}$, $X^*\cup\{p^*,q^*\}$, $X^*\cup\{p^*\}$, or $X^*\cup\{q^*\}$ would be hidden behind it.
Also, the edge~$\{p^*,q^*\}$ is not in~$M$ by Lemma~\ref{lem:side} (this edge would hide precisely the points in~$X^*$).

We conclude that there is an edge~$a'$ in~$M$ that is visible from the circle center and that has one endpoint~$r$ in the set~$X^*$.
Let $r^*$ be the antipodal point to~$r$ on the circle, and note that $r^*\in X$ by the fact that $r\in X^*$.
We now consider the line~$\rho$ through~$r$ and~$r^*$.
We assume w.l.o.g.\ that the other endpoint~$s$ of the edge~$a'$ different from~$r$ is to the right of the ray from~$r$ to~$r^*$ (otherwise mirror the configuration).
Clearly, the edge~$a'$ is flippable together with~$a$.
Performing this centered flip yields a matching~$M_1$ with two new edges that we call~$b$ and~$b'$, where $b$ has endpoints $p$ and~$r$, and $b'$ has endpoints~$q$ and~$s$.
We now distinguish two cases: Either~$\rho$ does not cross any edges of the matching~$M_1$, or $\rho$ crosses some matching edge.

Case 1: In~$M_1$, the line~$\rho$ does not cross any matching edges; see Figure~\ref{fig:4flips}~(b1).
Let $c$ be the edge ending at~$r^*\in X$.
Note that the other endpoint~$t$ of~$c$ must also be in~$X$, as the edge~$c$ was hidden behind~$a$ in~$M$.
Furthermore, $b$ and~$c$ lie on the same side of the line~$\rho$ by Lemma~\ref{lem:side}.
We can thus flip $b$ together with~$c$, yielding a matching~$M_2$ with two new edges that we call~$d$ and~$d'$, where $d'$ is the edge through the circle center with endpoints~$r$ and~$r^*$ and~$d$ has~$p$ and~$t$ as endpoints; see Figure~\ref{fig:4flips}~(c1).
Now, we flip $d'$ together with $b'$, yielding a matching~$M_3$ which again contains the edge~$a'$, plus a new edge that we call~$e$, which has~$q$ and~$r^*$ as endpoints; see Figure~\ref{fig:4flips}~(d1).
Observe that $M_3$ differs from~$M$ only in the removal of the edges~$a$ and~$c$, and the addition of~$d$ and~$e$ that are both shorter than~$a$.
As the edge~$c$ is hidden behind~$a$ in~$M$ and therefore not visible, and the edges $d$ and~$e$ are visible in~$M_3$, the lemma is proved in this case.

Case 2: In~$M_1$, the line~$\rho$ crosses some matching edge~$c$; see Figure~\ref{fig:4flips}~(b2).
We let $t$ and $u$ denote the endpoints of~$c$, where the circle center is to the right of the ray from~$t$ to~$u$.
Both $t$ and~$u$ are in~$X$, as the edge~$c$ was hidden behind the edge~$a$ in~$M$.
We flip $b$ together with~$c$, yielding a matching~$M_2$ with two new edges that we call~$d$ and~$d'$, where $d$ has $p$ and~$t$ as endpoints, and $d'$ has $r$ and~$u$ as endpoints; see Figure~\ref{fig:4flips}~(c2).
As the point~$r^*$ lay behind the edge~$c$ in~$M_1$, there must be another edge~$e$ behind~$c$ in~$M_1$ which touches or crosses the line~$\rho$ and is now visible in~$M_2$.
We let $v$ and $w$ denote the endpoints of this edge, where the circle center is to the right of the ray from~$v$ to~$w$.
We flip the edge~$d'$ together with~$e$, yielding a matching~$M_3$ with two new edges that we call~$f$ and~$f'$, where $f$ has $u$ and~$w$ as endpoints, and $f'$ has $r$ and~$v$ as endpoints; see Figure~\ref{fig:4flips}~(d2).
We finally flip the edge~$f'$ together with~$b'$, yielding a matching~$M_4$ which again contains the edge~$a'$, plus a new edge that we call~$g$, which has~$q$ and~$v$ as endpoints; see Figure~\ref{fig:4flips}~(e2).
Observe that $M_4$ differs from~$M$ only in the removal of the edges~$a$, $c$, and $e$, and the addition of~$d$, $f$ and~$g$ that are all shorter than~$a$.
As the edges~$c$ and~$e$ are hidden behind~$a$ in~$M$ and therefore not visible, and the edges~$d$ and~$g$ are visible in~$M_4$, the lemma is proved in this case.
\end{proof}

With Lemma~\ref{lem:4flips} in hand, the proof of Theorem~\ref{thm:conn} is straightforward.

\begin{proof}[Proof of Theorem~\ref{thm:conn}]
\begin{figure}
\includegraphics{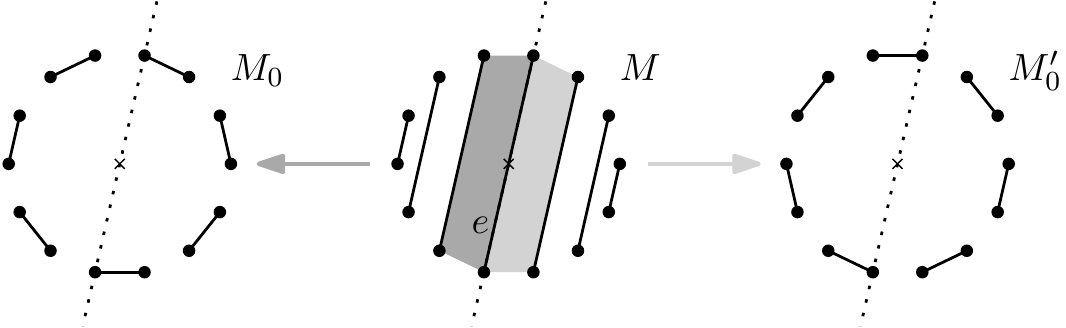}
\caption{Illustration of the proof of Theorem~\ref{thm:conn}.
The two gray quadrilaterals indicate the only two centered flips in~$M$.}
\label{fig:adidas}
\end{figure}

Let $M\in\cM_n$ be an arbitrary matching.
If $M$ has an edge through the circle center, there is a centered flip involving this edge to a matching without an edge through the center.
If $M$ has no edge through the circle center, then repeatedly applying Lemma~\ref{lem:4flips} shows that in the graph~$\cH_n$ there is a path from~$M$ to either~$M_0$ or~$M_0'$, as these are the only two matchings that have the maximum number of visible edges.
To prove the theorem, it suffices to show that there is also a path between~$M_0$ and~$M_0'$ in~$\cH_n$.
To see this, consider a matching~$M$ that consists of $n$ parallel edges; see Figure~\ref{fig:adidas}.
We have argued before that there is a path between~$M$ and either~$M_0$ or~$M_0'$.
W.l.o.g.\ we assume that this path reaches~$M_0$.
The first centered flip on this path must involve the unique edge~$e$ of length~$\mu$ of~$M$, and one of the two edges of length~$\mu-1$ next to it.
Consider the sequence of quadrilaterals corresponding to the flip sequence on this path from~$M$ to~$M_0$, and consider the quadrilaterals obtained by mirroring along the line through~$e$.
This mirrored flip sequence will lead from~$M$ to the matching obtained from~$M_0$ by mirroring along the line through~$e$, which is precisely the matching~$M_0'$.
This proves that $M_0$ and~$M_0'$ are connected in~$\cH_n$, and this completes the proof of the theorem.
\end{proof}

\begin{figure}
\includegraphics{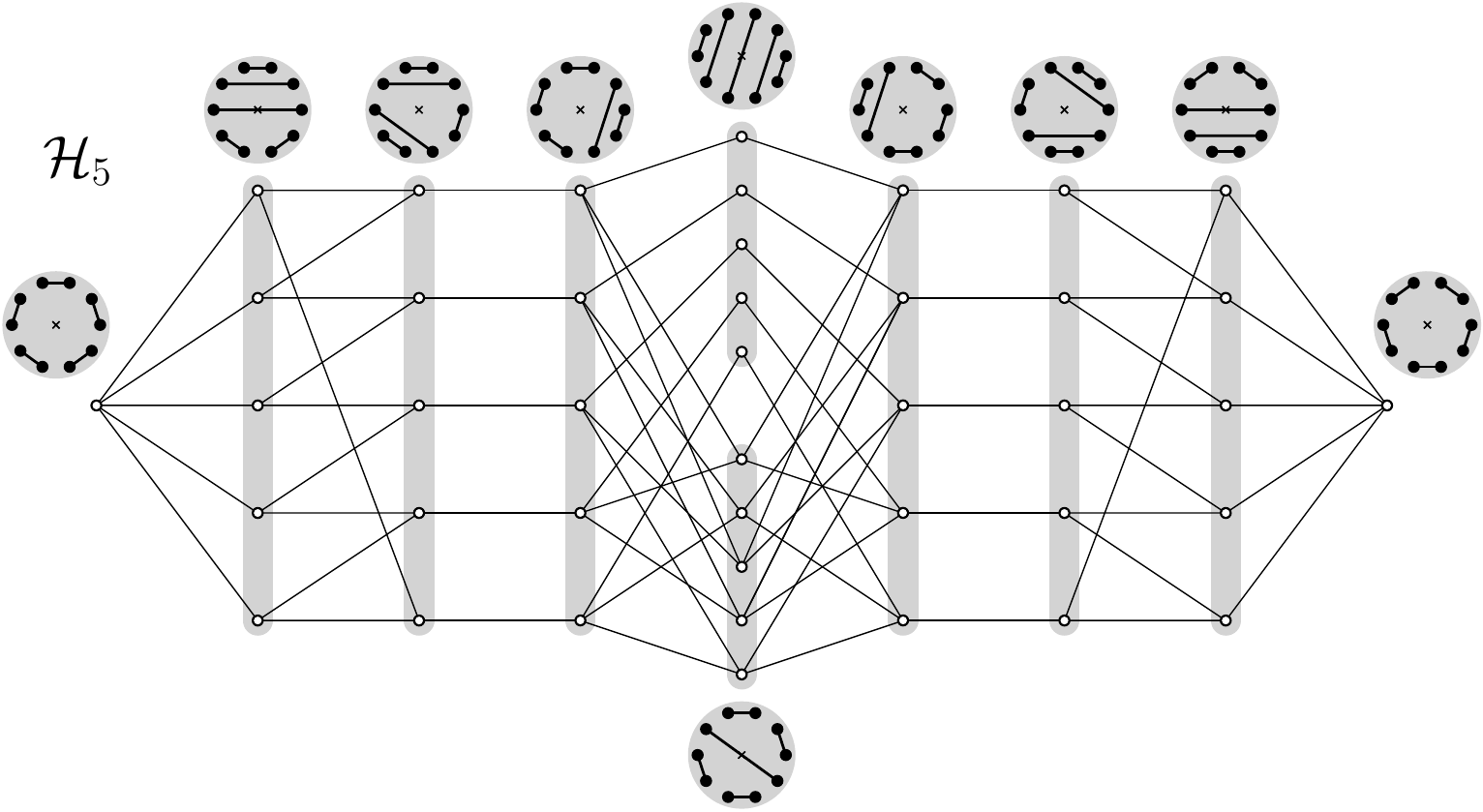}
\caption{The graph $\cH_5$, drawn in a simplified way, where for every matching, we only show one representative of the equivalence class under rotation by~$2\pi/5$.}
\label{fig:h5}
\end{figure}

Recall that the \emph{diameter} of a graph is the maximum length of all shortest paths between any two vertices of the graph.
In the flip graph~$\cH_n$, the diameter measures how many centered flips are needed in the worst case to transform two matchings into each other.
With computer help, we determined the diameter of~$\cH_n$ for $n=3,5,7,9,11$ to be~$2,8,14,20,26$, which equals~$3n-7$ for those values of~$n$.
In all those cases, this distance was attained for the two matchings that have only perimeter edges (differing by a rotation of~$\pi/n$).
These are the extreme vertices on the left and right in Figure~\ref{fig:h5} (cf.\ also the left hand side of Figure~\ref{fig:g34}).
We conjecture that this is the correct value for all odd~$n$.
As a first step towards this conjecture, we can prove the following linear bounds.

\begin{theorem}
\label{thm:diam}
For odd $n\geq 3$, the diameter of~$\cH_n$ is at least $n-1$ and at most $11n-29$.
\end{theorem}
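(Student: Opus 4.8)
The plan is to establish the two bounds separately, the lower bound being essentially immediate. Since $\cH_n$ is a spanning subgraph of $\cG_n$, the distance between any two matchings in $\cH_n$ is at least their distance in $\cG_n$; as $\cG_n$ has diameter $n-1$ (Hernando, Hurtado, and Noy~\cite{MR1939072}) and $\cH_n$ is connected for odd~$n$ (Theorem~\ref{thm:conn}), it follows that the diameter of $\cH_n$ is at least $n-1$.

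For the upper bound I would combine two quantitative claims. First, \emph{every matching $M\in\cM_n$ is within $4n-11$ centered flips of one of the two matchings $M_0,M_0'$}: if $M$ has an edge through the circle center, a single centered flip removes it, and otherwise we leave $M$ unchanged; either way the resulting matching has no edge through the center and hence at least three visible edges (by the argument in the proof of Theorem~\ref{thm:min-deg}). Now apply Lemma~\ref{lem:4flips} repeatedly: each application costs at most four centered flips, preserves the absence of an edge through the center, and strictly increases the number of visible edges, so after at most $n-3$ applications we reach a matching whose number of visible edges attains the maximum value~$n$, which by the proof of Theorem~\ref{thm:conn} forces the matching to be $M_0$ or $M_0'$; in total this path has at most $1+4(n-3)=4n-11$ flips. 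Second, \emph{$d_{\cH_n}(M_0,M_0')\le 3n-7$} (discussed below). Granting these, the statement follows from the triangle inequality: for $M,M'\in\cM_n$, route $M$ within $4n-11$ flips to some $X\in\{M_0,M_0'\}$ and $M'$ within $4n-11$ flips to some $Y\in\{M_0,M_0'\}$; if $X=Y$ then $d(M,M')\le 8n-22\le 11n-29$ (using $n\ge 3$), and if $X\ne Y$ then $d(M,M')\le (4n-11)+d_{\cH_n}(M_0,M_0')+(4n-11)\le 11n-29$.

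The main obstacle is thus the estimate $d_{\cH_n}(M_0,M_0')\le 3n-7$, because the bound that comes for free from Lemma~\ref{lem:4flips} is only of order $8n$. I would prove it via an explicit sequence of centered flips, verified by induction on~$n$ in steps of two. The matchings $M_0$ and $M_0'$ agree away from their two outermost ``layers'' of edges, and one should be able to use a bounded number of centered flips---I expect six---to move those outer layers into the target position while leaving, on the remaining $2(n-2)$ points, a copy of the analogous pair $M_0,M_0'$ for the instance of size $n-2$; the induction hypothesis then finishes the job, and the base case $n=3$ is checked by hand ($d_{\cH_3}(M_0,M_0')=2$), giving $2+6\cdot\frac{n-3}{2}=3n-7$ overall. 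An alternative route, more in the spirit of the proof of Theorem~\ref{thm:conn}, is to pass through the matching $P$ of $n$ pairwise parallel chords: the mirroring argument there gives $d_{\cH_n}(M_0,M_0')\le 2\,d_{\cH_n}(P,M_0)$, so it would suffice to build, again recursively, a sequence of at most $(3n-7)/2$ centered flips from $P$ to $M_0$. Either way, the delicate point is to control the flips on these highly structured matchings precisely enough that the constant works out to $3n-7$ rather than a weaker multiple of~$n$.
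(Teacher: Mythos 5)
Your proposal is correct and follows essentially the same route as the paper: the lower bound from the diameter of $\cG_n$, the $4n-11$ bound to $\{M_0,M_0'\}$ via Lemma~\ref{lem:4flips}, and the inductive $3n-7$ bound on $d_{\cH_n}(M_0,M_0')$ obtained by spending six centered flips (three on each end) to reduce to the $(n-2)$-instance on the remaining antipodal-free point set. The only piece you leave as a sketch---the explicit three-flip sequence from $M_0$ to a matching with $n-1$ perimeter edges and one length-$1$ edge, and the observation that the two ignored antipodal edges do not disturb centeredness of flips in the smaller instance---is precisely what the paper supplies (via Figure~\ref{fig:stack}), so there is no substantive gap.
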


\begin{proof}
Hernando, Hurtado, and Noy~\cite{MR1939072} showed that the diameter of~$\cG_n$ is exactly~$n-1$, and as $\cH_n$ is a spanning subgraph of~$\cG_n$, its diameter is at least~$n-1$.

It remains to prove the upper bound in the theorem.
As before, we let $M_0$ and~$M_0'$ denote the two matchings that have only perimeter edges.
We first argue that the distance between any matching~$M\in\cM_n$ and either~$M_0$ or~$M_0'$ is at most~$4n-11$.
Indeed, if $M$ has no edge through the circle center, then it has at least~3 visible edges by Lemma~\ref{lem:side}.
As a consequence of Lemma~\ref{lem:4flips}, we can reach~$M_0$ or~$M_0'$, which have $n$ visible edges each, from~$M$ with at most $4(n-3)=4n-12$ centered flips.
On the other hand, if~$M$ has an edge through the circle center, then a single centered flip leads from~$M$ to one of its neighbors that does not have an edge through the center, establishing the bound~$4n-11$.

In the remainder of the proof we show that the distance between~$M_0$ and~$M_0'$ in~$\cH_n$ is at most~$3n-7$.
With these two bounds, we can then bound the distance in~$\cH_n$ between any two matchings~$M,M'\in\cM_n$ as follows:
We know that from both~$M$ and~$M'$ we can reach either~$M_0$ or~$M_0'$ with at most~$4n-11$ centered flips each.
If both of these flip sequences reach the same matching from~$\{M_0,M_0'\}$, we have found a path in~$\cH_n$ of length at most~$2(4n-11)$ between~$M$ and~$M'$.
Otherwise we can connect~$M_0$ and~$M_0'$ with a path of length~$3n-7$, yielding a path of length at most~$2(4n-11)+(3n-7)=11n-29$ between~$M$ and~$M'$, proving the upper bound in the theorem.

\begin{figure}
\includegraphics{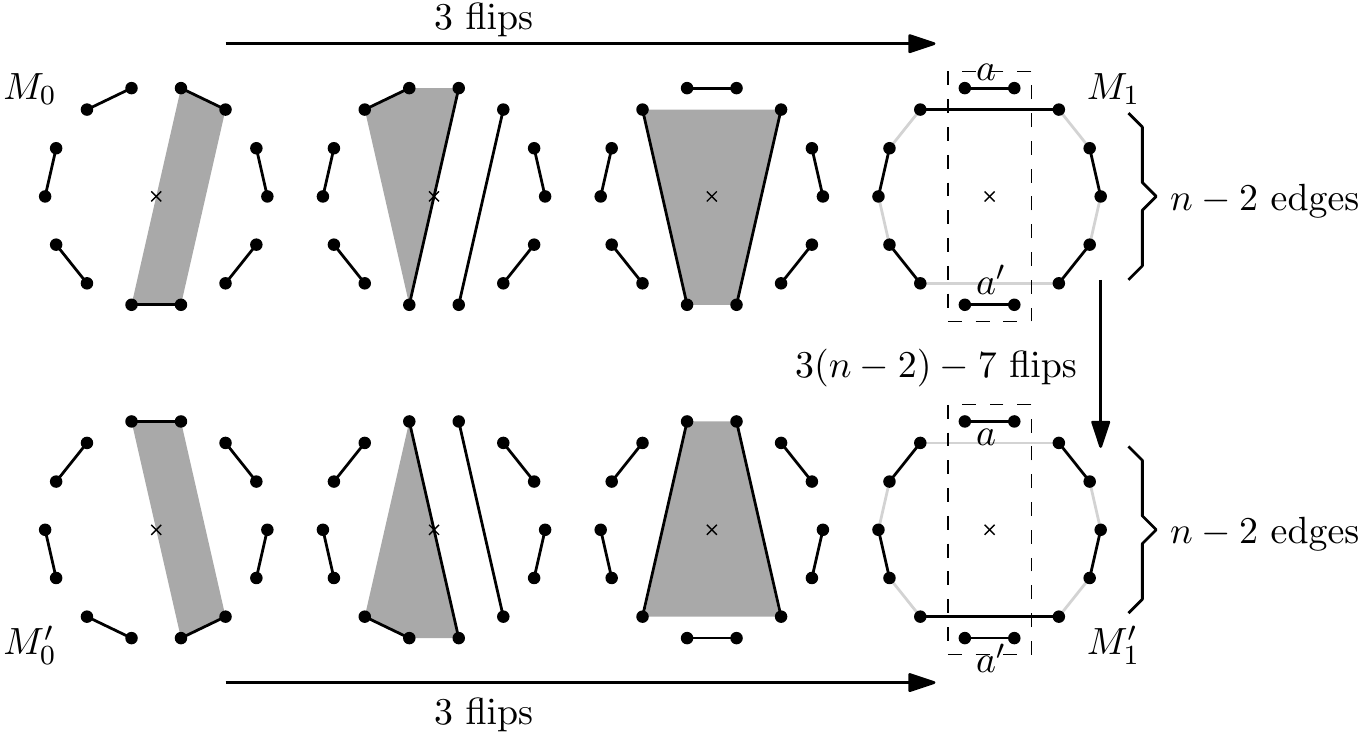}
\caption{Illustration of the inductive proof that the distance between~$M_0$ and~$M_0'$ in~$\cH_n$ is at most~$3n-7$.
The two antipodal edges~$a$ and~$a'$ that are the same in~$M_1$ and~$M_1'$ are framed by a dashed box.
The edges in the symmetric difference of~$M_1$ and~$M_1'$ are drawn light-gray in these two matchings.
The gray quadrilaterals indicate flips between~$M_0$ and~$M_1$, and between~$M_0'$ and~$M_1'$, respectively.
}
\label{fig:stack}
\end{figure}

We prove the claim that the distance between~$M_0$ and~$M_0'$ is at most~$3n-7$ by induction on all odd values of~$n\geq 3$; see Figure~\ref{fig:stack}.
For $n=3$ the distance between $M_0$ and~$M_0'$ is $3n-7=2$, as can be verified from the left hand side of Figure~\ref{fig:g34}.
For the induction step, suppose that $n\geq 5$ is odd and that the claim holds for~$n-2$.
Consider the flip sequence shown at the top part of Figure~\ref{fig:stack}, consisting of 3 centered flips, leading from the matching~$M_0$ to a matching~$M_1$ that contains $n-1$ perimeter edges and one edge of length~1.
Consider the two edges~$a$ and~$a'$ in~$M_1$ that lie antipodally on the circle, where the edge~$a$ is hidden behind the unique length-1 edge.
We can ignore the edges~$a$ and~$a'$ from the configuration, and obtain a matching with $n-2$ edges.
As the ignored edges are antipodal on the circle, every centered flip operating on the remaining $n-2$ edges in~$\cH_{n-2}$ is also a centered flip in~$\cH_n$.
Also observe that ignoring those two edges from~$M_1$ leaves us with a matching in~$\cH_{n-2}$ that has only perimeter edges.
Consequently, by induction we have a flip sequence of length~$3(n-2)-7$ from~$M_1$ to a matching~$M_1'$ that has $n-1$ perimeter edges and one edge of length~1, that still contains the edges~$a$ and~$a'$, but now the edge~$a'$ is hidden behind the unique length-1 edge.
By symmetry, we can reach $M_0'$ from $M_1'$ with at most 3 centered flips.
Overall, the length of the flip sequence from~$M_0$ to~$M_0'$ obtained in this way is $3(n-2)-7+2\cdot 3=3n-7$.
This completes the inductive proof and thus the proof of the theorem.
\end{proof}

\section{Component structure for even~$n$}
\label{sec:even}

In this section, we assume that the number~$n$ of matching edges is even.
It was proved in~\cite{MR4046775} that in this case the graph~$\cH_n$ has at least $n-1$ components.
We improve upon this considerably, by showing that $\cH_n$ has \emph{exponentially} many components, and we also provide a fine-grained picture of the component structure of the graph~$\cH_n$ (Theorem~\ref{thm:struct} and Corollary~\ref{cor:comp}).
We also prove explicit formulas for the number of matchings with certain weights, a parameter that is closely related to the component sizes of the graph~$\cH_n$, proving a conjecture raised in~\cite{MR4046775} (Theorem~\ref{thm:narayana} and Corollary~\ref{cor:size}).

\subsection{Centrally symmetric matchings}

A matching~$M\in\cM_n$ is said to be \emph{centrally symmetric}, if it is invariant under point reflection at the circle center.
We let $\cS_n\seq\cM_n$ denote the set of all centrally symmetric matchings.
For any edge~$e$ in a matching~$M\in\cS_n$, we let $\sigma(e)$ denote the edge that is obtained from~$e$ by point reflection at the circle center; see Figure~\ref{fig:length}.

\begin{theorem}
\label{thm:struct}
For even~$n\geq 2$, there are $\binom{n}{n/2}$ centrally symmetric matchings, and all those matchings form components in~$\cH_n$ that are trees.
There are $C_{n/2}$ such components, and each of them contains exactly $n/2+1$ matchings.
All matchings that are not centrally symmetric form components that are not trees.
\end{theorem}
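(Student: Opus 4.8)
The plan is to prove that $\cS_n$ is a union of connected components of $\cH_n$, to describe each of those components explicitly through the nesting structure of an associated non-crossing matching on half as many points, and only then to treat the non-centrally-symmetric matchings, which is where a genuinely separate argument is needed.

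\emph{Closure under centered flips.} First I would show that a centered flip applied to $M\in\cS_n$ again yields a matching in $\cS_n$. The key point is that, since $n$ is even there are no diameters, so all four sides of the flip quadrilateral $Q$ avoid the circle center and $Q$ therefore contains it in its interior; as $Q$ is empty and contains the center, both flipped edges $e,f$ of $M$ are visible from the center, and so is $\sigma(e)$, where $\sigma$ denotes point reflection at the center. A short argument with Lemma~\ref{lem:side} then forces $f=\sigma(e)$, so the flip exchanges a $\sigma$-pair $\{e,\sigma(e)\}$ with a $\sigma$-pair $\{e',\sigma(e')\}$ inside the parallelogram $\mathrm{conv}(e\cup\sigma(e))=\mathrm{conv}(e'\cup\sigma(e'))$. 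In particular the new matching is again centrally symmetric, so $\cS_n$ is a union of components of $\cH_n$.

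\emph{The associated matching and the count.} Label the points $p_1,\dots,p_{2n}$ with $\sigma(p_k)=p_{k+n}$, and regard the $n$ antipodal pairs as points $1,\dots,n$ on a circle. For $M\in\cS_n$ the $\sigma$-pair $\{e,\sigma(e)\}$ with $e=\{p_a,p_b\}$ uses the antipodal pairs $a\bmod n$ and $b\bmod n$; I would check that this defines a \emph{non-crossing} perfect matching $\Phi(M)\in\cM_{n/2}$, and that $\Phi$ is constant on components (a centered flip changes $M$ only inside such a parallelogram and keeps the two antipodal pairs fixed). For a fixed $N\in\cM_{n/2}$ the fiber $\Phi^{-1}(N)$ is then described as follows: each edge $\{i,j\}$ of $N$ with $i<j$ can be realized by a $\sigma$-pair on $p_i,p_j,p_{i+n},p_{j+n}$ in exactly two ways, ``state $0$'' $\{p_i,p_j\},\{p_{i+n},p_{j+n}\}$ and ``state $1$'' $\{p_i,p_{j+n}\},\{p_j,p_{i+n}\}$ (two diameters being impossible). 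Analyzing when two such $\sigma$-pairs cross, one finds that a choice of states produces a (necessarily centrally symmetric) matching precisely when the set $S$ of state-$1$ edges is a \emph{root-to-node path} in the nesting forest of $N$: a set of $N$-edges totally ordered by nesting whose largest element is an outermost edge (and $S=\emptyset$ is allowed). The number of such $S$ is $1$ plus the number of edges of $N$, namely $n/2+1$, so $|\cS_n|=C_{n/2}\,(n/2+1)=\binom{n}{n/2}$.

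\emph{Tree structure.} Writing $M_S$ for the matching in $\Phi^{-1}(N)$ with state-$1$ set $S$, I would next determine which of the parallelograms $\mathrm{conv}(\{p_i,p_j,p_{i+n},p_{j+n}\})$ are empty in $M_S$: exactly the one belonging to the deepest edge of $S$ and those belonging to its children in the nesting forest (for $S=\emptyset$ the ``children'' being the outermost edges of $N$). Hence the centered flips out of $M_S$ are precisely ``delete the deepest edge of $S$'' and ``append a child of the deepest edge of $S$'', and the component of $M_S$ is isomorphic to the nesting forest of $N$ with one extra vertex adjoined joining all of its trees — that is, a tree on $n/2+1$ vertices. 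This gives the first four assertions: $C_{n/2}$ components, one for each $N\in\cM_{n/2}$, each a tree with exactly $n/2+1$ vertices, covering all of $\cS_n$.

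\emph{The remaining components and the main obstacle.} By the closure step, a component $K$ containing a non-centrally-symmetric matching contains no centrally symmetric matching at all; so it suffices to show such a $K$ is not a tree. Suppose it were. The reflection $\sigma$ is an automorphism of $\cH_n$, and the plan is to argue that $\sigma(K)=K$, so that $\sigma$ restricts to an involutory automorphism of the tree $K$ and must fix a vertex or an edge. It cannot fix an edge $\{u,w\}$ with $\sigma(u)=w$: by the closure step the flip quadrilateral between $u$ and $w$ would be $\sigma$-invariant with $\sigma$ interchanging its two pairs of opposite sides, which forces one of its corners to be $\sigma$-fixed, impossible since $n$ is even. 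Hence $\sigma$ fixes a vertex $v$, i.e.\ $v\in\cS_n$, a contradiction. I expect the hard parts to be (i) the crossing analysis that identifies the fibers of $\Phi$ with root-to-node paths, and (ii) the verification that every tree component is $\sigma$-invariant: because any two centered flips out of the same matching share the center, $\cH_n$ has no $4$-cycles (and no triangles, being bipartite), so one cannot simply exhibit a short cycle in $K$ and instead must push through this symmetry argument, presumably via a structural description of the degree-$1$ vertices supplied by Theorem~\ref{thm:min-deg}.
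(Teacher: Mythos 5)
Your treatment of the centrally symmetric matchings is correct in outline but takes a genuinely different route from the paper. The paper counts $\cS_n$ by a direct bijection with balanced binary strings of length $n$, proves tree-ness by showing that along any hypothetical cycle of centrally symmetric matchings the lengths of the entering edge pairs would strictly increase, and pins down the size $n/2+1$ by arguing that each of the $n/2$ pairs $\{e,\sigma(e)\}$ is flipped exactly once somewhere in the tree. Your approach via the quotient matching $\Phi(M)\in\cM_{n/2}$ and the identification of each fiber with the set of root-to-node chains in a nesting forest is finer: it yields not just the size but the isomorphism type of each tree component (the nesting forest of $N$ with a root adjoined), at the cost of a careful crossing analysis and of fixing a cut point on the circle so that ``nesting'' and ``outermost'' are well defined for $N$ (for instance, for $N=\{\{1,4\},\{2,3\}\}$ on four antipodal classes the valid state sets are $\emptyset$, $\{\{1,4\}\}$, $\{\{1,4\},\{2,3\}\}$ but not $\{\{2,3\}\}$, so the forest structure is genuinely labeling-dependent). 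Both routes work; yours buys a structural description that the paper does not state.

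The last part, however, has a genuine gap, and it is exactly the step you flag: you never establish that a tree component $K$ containing a non-centrally-symmetric matching satisfies $\sigma(K)=K$. The map $\sigma$ is indeed an automorphism of $\cH_n$ preserving the weight of Lemma~\ref{lem:weight}, so $\sigma(K)$ is a component in the same weight class $\cM_{n,c}$ as $K$, but nothing proved so far prevents a weight class from splitting into several components that $\sigma$ permutes nontrivially; showing it does not is essentially the content of the paper's open conjecture that $\cH_n$ has exactly $C_{n/2}+n-3$ components. The tree hypothesis does not obviously help: a leaf of $K$ is a matching $M$ whose two length-$\mu$ edges form a $\sigma$-pair, but $M$ and $\sigma(M)$ differ by swapping the two sub-matchings hidden behind them, and connecting these by centered flips is again an instance of the hard connectivity question. (Your fixed-edge exclusion and the four-corner argument are fine; the invariance claim is the missing load-bearing piece.) The paper avoids this entirely: it calls a matching \emph{nice} if it has a visible non-symmetric edge, shows that every $M\in\cM_n\setminus\cS_n$ reaches a nice matching by centered flips, and shows that every nice matching has at least three visible non-symmetric edges and hence at least two nice neighbors; the subgraph induced by nice matchings then has minimum degree two and so contains a cycle. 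You would need either that argument or an independent proof of the $\sigma$-invariance of components to close your version.
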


The properties of the graph~$\cH_n$ stated in Theorem~\ref{thm:struct} can be seen nicely in Figure~\ref{fig:h6} for the case $n=6$.

The proof of Theorem~\ref{thm:struct} is split into the following four lemmas.
Specifically, we will count point-symmetric matchings (Lemma~\ref{lem:point}), show that they form tree components in~$\cH_n$ (Lemma~\ref{lem:tree}), determine the size of those components (Lemma~\ref{lem:tree-size}), and show that no other matching lies in a tree component (Lemma~\ref{lem:not-tree}).
For the remainder of this paper, we give the points on the unit circle a fixed labeling by $1,2,\ldots,2n$ in clockwise direction.

\begin{lemma}
\label{lem:point}
For even $n\geq 2$, the number of centrally symmetric matchings is $\binom{n}{n/2}$.
\end{lemma}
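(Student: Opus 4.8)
The plan is to set up a bijection between centrally symmetric matchings in $\cM_n$ and lattice-path-type objects counted by $\binom{n}{n/2}$. Observe first that a centrally symmetric matching $M\in\cS_n$ is completely determined by its restriction to one half of the circle: since $n$ is even, no edge passes through the center (Lemma~\ref{lem:side} applied with an edge through the center would force $n$ odd), so every edge lies entirely in one of the two open half-disks determined by the diameter through points $\tfrac12$ and $n+\tfrac12$ (lying between labels $n,n+1$ and $2n,1$), or it crosses that diameter. An edge that crosses the diameter must be mapped to itself by $\sigma$, which is impossible unless it passes through the center; hence no edge crosses the diameter either, and $M$ splits into two halves, one being the $\sigma$-image of the other. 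So $M$ is determined by an arbitrary non-crossing matching pattern on the $n$ points $1,2,\ldots,n$ — but crucially \emph{not} a perfect matching of those $n$ points: some of the points $1,\ldots,n$ are matched to each other within this half, while the rest are matched across to the mirror half, i.e.\ matched within the \emph{other} half under $\sigma$.

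The key step is to encode the half-configuration correctly. Reading the points $1,2,\ldots,n$ from left to right, assign to each point a symbol in $\{(\,,\,)\}$ as follows: point $i$ gets an open bracket if its partner in $M$ has a larger label (going clockwise past the "far" diameter through $n/2$ and $3n/2$), and a close bracket otherwise; equivalently, interpret the non-crossing structure so that points matched within the near half form balanced bracket pairs, and points matched to the far half are "unmatched within the half" and get a consistent orientation. One checks that non-crossingness of $M$ is equivalent to this bracket string being a \emph{prefix of a Dyck path with equal numbers of up and down steps}, i.e.\ an arbitrary sequence of $n/2$ open and $n/2$ close brackets in which every close bracket has a matching open bracket to its left — but by the reflection symmetry the "unmatched" positions on the two sides pair up, so in the end the only constraint is that among the $n$ positions exactly $n/2$ are "open-type", which is recorded by a string in $\{(\,,)\}^n$ with $n/2$ of each symbol and no further restriction. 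There are exactly $\binom{n}{n/2}$ such strings, and the map is reversible: from the string one reconstructs the near half by the standard bracket-matching rule for the balanced part and the forced cross-pairings for the rest, then extends by $\sigma$. This gives the claimed count.

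I would expect the main obstacle to be pinning down exactly which half-configurations on $\{1,\ldots,n\}$ extend (via $\sigma$) to a \emph{non-crossing} matching of all $2n$ points, and proving that this is precisely the set of binary strings with $n/2$ symbols of each type. The subtlety is that crossings can occur not only among edges within one half but also between a near-half edge and a far-half edge near the two diameters; one must verify that the reflection symmetry makes the near-far crossing condition automatically equivalent to the near-near condition, so that no extra constraint survives. Once that equivalence is established, the count $\binom{n}{n/2}$ is immediate. An alternative, perhaps cleaner, route that I would keep in reserve: argue directly that choosing a centrally symmetric non-crossing perfect matching of $2n$ points is the same as choosing a non-crossing perfect matching of a $2n$-gon invariant under the central involution, and invoke the known enumeration of centrally symmetric non-crossing perfect matchings (a classical result giving $\binom{n}{n/2}$), citing it if a self-contained bijective argument proves unwieldy.
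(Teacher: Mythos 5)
There is a genuine gap, and it occurs at the very first structural step. You claim that in a centrally symmetric matching no edge crosses the diameter separating the points $1,\ldots,n$ from $n+1,\ldots,2n$, arguing that such an edge ``must be mapped to itself by $\sigma$.'' That is false: if $e$ has one endpoint in $\{1,\ldots,n\}$ and the other in $\{n+1,\ldots,2n\}$, then $\sigma(e)$ is again an edge of this type, but in general it is a \emph{different} edge, and $e$ and $\sigma(e)$ need not cross each other. Concretely, for $n=2$ the matching $\{\{2,3\},\{1,4\}\}$ is centrally symmetric and both of its edges cross that diameter; for $n=4$ the matching $\{\{1,8\},\{2,7\},\{3,6\},\{4,5\}\}$ of parallel edges is centrally symmetric and \emph{all} of its edges cross it. So centrally symmetric matchings do not decompose into two independent half-configurations, and the bijection you build on that decomposition collapses. (Your own later sentence, where some points of $1,\ldots,n$ are ``matched across to the mirror half,'' already contradicts the no-crossing claim, which signals the inconsistency.) The remaining encoding argument is then asserted rather than proved: the statement that ``the only constraint is that among the $n$ positions exactly $n/2$ are open-type'' with ``no further restriction'' is precisely the content of the lemma, and neither the balance $n/2$--$n/2$ nor the invertibility of the map is verified. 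Falling back on citing a classical enumeration would of course give the number, but it is not a proof in the sense intended here.

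The actual proof in the paper handles the crossing edges instead of excluding them: scanning the points $1,\ldots,n$, an edge with both endpoints in this range records a $1$ at its first and a $0$ at its second endpoint, while for an edge $e$ with only one endpoint in this range one uses that $\sigma(e)$ also has exactly one endpoint there, and the pair $\{e,\sigma(e)\}$ records a $0$ at the first and a $1$ at the second of these two points; this automatically produces a string with $n/2$ zeros and $n/2$ ones. The inverse map duplicates the string to length $2n$, labels the points accordingly, and greedily matches $1$-points to $0$-points subject to all hidden points being already matched, with central symmetry guaranteed because antipodal points carry equal bits. If you want to salvage your write-up, replacing your half-disk decomposition by an encoding of this type (one that treats within-half and across-half edges separately) is the way to do it.
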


\begin{figure}
\includegraphics{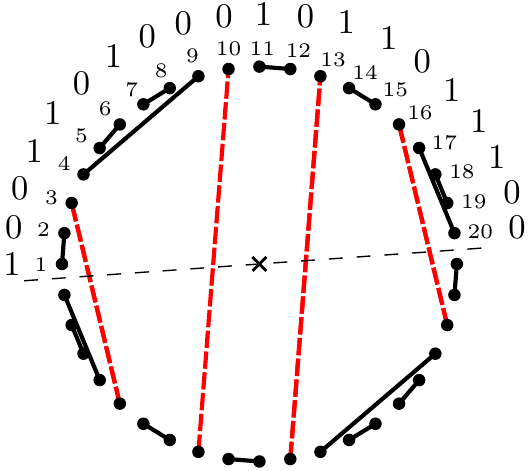}
\caption{
Bijection used in the proof of Lemma~\ref{lem:point}.
Matching edges from one of the first $n$ points to one of the last $n$ points are dashed, while the matching edges between points of the same group are drawn solid.
}
\label{fig:point}
\end{figure}

\begin{proof}
To prove the counting formula, we establish a bijection between~$\cS_n$ and binary strings of length~$n$ with exactly $n/2$ many~0s and~1s; see Figure~\ref{fig:point}.
For this we use the labels $1,2,\ldots,2n$ of the points.

Given a matching $M\in\cS_n$, we scan through the first half of the points, i.e., from point~1 to point~$n$.
For each of the points that we encounter, we record a 0-bit or a 1-bit as follows:
For every edge that has both endpoints among the points $1,\ldots,n$, we record a~1 when we encounter its first endpoint, and a~0 when we encounter its second endpoint.
For every edge~$e$ that has only one endpoint among the points $1,\ldots,n$, the edge $\sigma(e)$ has also exactly one endpoint among the points $1,\ldots,n$, and we record a~0 when we encounter the first of these two points, and a~1 when we encounter the second of these two points.
This procedure clearly yields a binary string of length~$n$ with exactly $n/2$ many~0s and~1s.

Given a binary string of length~$n$ with exactly $n/2$ many~0s and~1s, we append two copies of the string, yielding a string of length~$2n$, and we label the points $1,\ldots,2n$ along the circle with the bits of this string.
We then repeatedly match a point labeled with a~1 with a point labeled with a~0, subject to the constraint that all points hidden behind this matching edge are already matched.
As there are $n$ points labeled~0 and $n$ points labeled~1, the resulting matching is perfect.
Moreover, by construction every pair of antipodal points is labeled with the same bit, and consequently every matching edge~$e$ is present together with its partner~$\sigma(e)$, i.e., we indeed obtain a matching from~$\cS_n$.

One can verify directly that the mappings described before are inverse to each other.
\end{proof}

One last remark about Lemma~\ref{lem:point}: It can easily be shown that for \emph{odd} $n\geq 3$, the number of centrally symmetric matchings is $n\cdot C_{(n-1)/2}$, but this is not relevant here.

\begin{lemma}
\label{lem:tree}
For even $n\geq 2$, any centrally symmetric matching lies in a component of~$\cH_n$ that contains only centrally symmetric matchings, and this component is a tree.
\end{lemma}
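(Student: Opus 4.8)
The plan is to analyze what happens to a centrally symmetric matching $M\in\cS_n$ under a single centered flip, and to show two things: first, that the resulting matching is again centrally symmetric, so that the component containing $M$ consists entirely of centrally symmetric matchings; and second, that on the set $\cS_n$ the flip operation has enough rigidity that the component graph cannot contain a cycle. The starting observation is that when $n$ is even no matching edge passes through the circle center (as noted in the proof of Theorem~\ref{thm:min-deg}), so every centered flip in $M$ involves a quadrilateral spanned by two edges $e,f\in M$ whose lengths sum to $n-2$ by Lemma~\ref{lem:centered}. Since $M$ is centrally symmetric, $\sigma(e)$ and $\sigma(f)$ are also edges of $M$; I would first show that the flip quadrilateral on $\{e,f\}$ is in fact the point reflection of itself, i.e.\ $\{e,f\}=\{\sigma(e),\sigma(f)\}$. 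Indeed, since the flip is centered the quadrilateral contains the center, and by Lemma~\ref{lem:side} the two visible edges $e$ and $f$ that span a centered flip lie on opposite sides of any antipodal line through one of their endpoints; combined with the fact that $\ell(e)+\ell(f)=n-2$ and $\ell(\sigma(e))=\ell(e)$, this forces $f=\sigma(e)$. So every centered flip in a centrally symmetric matching flips an edge together with its antipodal partner, and the two new edges are again a $\sigma$-pair. Hence the flipped matching is centrally symmetric, proving the first assertion of the lemma.

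Next I would set up the correspondence, already exploited in the proof of Lemma~\ref{lem:point}, between $\cS_n$ and binary strings of length $n$ with $n/2$ zeros and $n/2$ ones; equivalently, after recording a $+1$ for a $1$-bit and a $-1$ for a $0$-bit, between lattice paths of length $n$ from $(0,0)$ to $(n,0)$. Under the bijection, matching a $1$ with a $0$ with nothing hidden behind the edge corresponds to matching a peak (an up-step immediately followed by a down-step, or more precisely an adjacent $1$ then $0$) — but the centered-flip-on-a-$\sigma$-pair operation corresponds to a strictly local move on the string: flipping $e$ with $\sigma(e)$ where $e$ connects a $1$ at position $i$ to a $0$ at the ``innermost'' available position amounts, on the string of length $n$, to swapping two adjacent bits $1,0$ into $0,1$ (or vice versa), i.e.\ to a single elementary transposition that changes the associated lattice path by pushing one valley up to a peak or one peak down to a valley at a fixed horizontal coordinate. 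I would verify this translation carefully, using the ``innermost empty quadrilateral'' condition to pin down exactly which adjacent bits get swapped, and using the fact that the flip is centered to guarantee the swap occurs at the position corresponding to the antipodal split. The upshot is that the component of $M$ in $\cH_n$, restricted to $\cS_n$, is isomorphic to the graph on length-$n$ balanced strings whose edges are these particular adjacent transpositions.

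Finally, to conclude that the component is a tree I would exhibit a height function (a potential) that strictly decreases along an orientation of each flip: for a balanced string, let $\Phi$ be the sum over all positions of the partial sums of the $\pm1$ sequence, i.e.\ the signed area under the lattice path. A local ``push a valley up to a peak'' move increases $\Phi$ by exactly $2$, and ``push a peak down'' decreases it by $2$; thus no closed walk using these moves can return to its start after a nontrivial excursion unless it backtracks, which already rules out cycles — but more is needed since we must exclude even ``fat'' cycles. The cleaner finish: orient each flip edge from the matching with the smaller $\Phi$ to the one with larger $\Phi$; this makes the component a DAG with a unique source (the string $0^{n/2}1^{n/2}$, corresponding to $n/2$ nested antipodal edge-pairs) and a unique sink ($1^{n/2}0^{n/2}$), and one checks that from every balanced string there is exactly one flip decreasing $\Phi$ unless the string is the source — which gives that every vertex has exactly one ``parent'', so the component is a tree rooted at the source. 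I expect the main obstacle to be the second paragraph: rigorously translating the geometric ``innermost empty centered quadrilateral on a $\sigma$-pair'' into precisely the claimed adjacent-transposition move on strings, making sure no other centered flips exist that would add extra edges to the component graph and break the tree structure. Everything else (central symmetry is preserved; the potential argument) is then routine.
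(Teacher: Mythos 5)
Your first step -- that in a centrally symmetric matching every centered flip pairs an edge $e$ with its partner $\sigma(e)$ -- is a true statement (the paper uses it too), but the justification you give for it does not work. Lemma~\ref{lem:centered} says that the sum of the lengths of \emph{all four} sides of the flip quadrilateral is $n-2$, not that $\ell(e)+\ell(f)=n-2$ for the two flipped edges; for example, flipping two antipodal perimeter edges is a centered flip with $\ell(e)+\ell(f)=0$. In fact $\ell(e)+\ell(f)=n-2$ could only hold when both edges have the maximum length $\mu=(n-2)/2$, so the deduction ``this forces $f=\sigma(e)$'' has no basis (your appeal to Lemma~\ref{lem:side} is also not licensed, since that lemma needs both antipodal points to be endpoints of visible edges). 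This part is repairable -- e.g.\ one can argue that if $f\neq\sigma(e)$ then $\sigma(e)$ would have to be hidden behind $f$ and $\sigma(f)$ hidden behind $e$, which is impossible -- but as written it is not a proof.

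The serious gap is the string model in your second and third paragraphs. Centered flips of $\sigma$-pairs do \emph{not} correspond to adjacent transpositions of the length-$n$ balanced string of Lemma~\ref{lem:point}. For $n=4$, flipping the pair $\{4,5\},\{8,1\}$ in the matching $\{2,3\},\{4,5\},\{6,7\},\{8,1\}$ turns the string $0101$ into $1100$ (positions $1$ and $4$); for $n=6$, flipping $\{1,4\},\{7,10\}$ in the matching $\{1,4\},\{2,3\},\{5,6\}$ together with its reflection turns $110010$ into $010110$ (again positions $1$ and $4$, not adjacent). Conversely, the adjacent swap $1010\to 1100$ (for $n=4$) is not a flip at all -- those two matchings even lie in different components of $\cH_4$. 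With the true moves your potential $\Phi$ does not change by $\pm 2$ (it changes by $\pm 6$ in the examples above), and the ``unique parent'' claim is false even where the moves are adjacent swaps: for $n=6$ the all-perimeter centrally symmetric matching (string $101010$) has three flip neighbors, all with strictly smaller $\Phi$. Worse, a unique global source/sink would make all of $\cS_n$ one connected tree, contradicting the $C_{n/2}$ components asserted in Theorem~\ref{thm:struct} (and visible already for $n=4$). You flagged the geometric-to-combinatorial translation as the main risk, and it is exactly where the argument fails. The paper avoids all of this machinery with a two-line monotonicity argument: on any alleged cycle of centrally symmetric matchings, the $\sigma$-pair entering at one flip is visible, hence after the next flip it lies hidden behind the newly entered pair, so the lengths of the entering pairs strictly increase along the cycle -- impossible. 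Some argument of this local, length-monotone kind (not a global potential on strings) is what your write-up is missing.
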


\begin{proof}
First of all, any centered flip in a centrally symmetric matching produces another centrally symmetric matching, so centrally symmetric matchings lie in their own components.
Moreover, in a centrally symmetric matching, every edge~$e$ can only be flipped together with its centrally symmetric partner~$\sigma(e)$.

Suppose for the sake of contradiction that there was a cycle in~$\cH_n$ consisting of centrally symmetric matchings, and consider three consecutive (and therefore distinct) matchings $M_1,M_2,M_3\in\cS_n$ on that cycle; see Figure~\ref{fig:length}.
Let $e,\sigma(e)$ be the two edges that entered in the flip from~$M_1$ to~$M_2$, and let $f,\sigma(f)$ be the two edges that entered in the flip from~$M_2$ to~$M_3$.
Clearly, $e$ and~$\sigma(e)$ must be hidden behind~$f$ and~$\sigma(f)$, and therefore we have $\ell(f)>\ell(e)$.
This length increase holds for any triple of consecutive matchings on the cycle, which is a contradiction.

\begin{figure}
\includegraphics{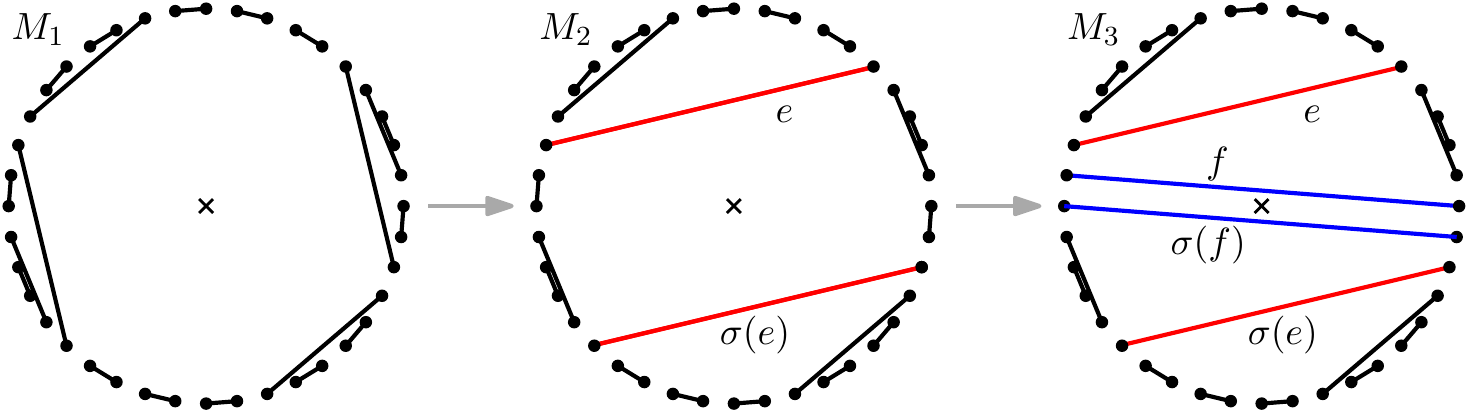}
\caption{
Illustration of the proof of Lemma~\ref{lem:tree}.
The edges $e$ and~$\sigma(e)$ that enter in the flip from~$M_1$ to~$M_2$, and the edges $f$ and~$\sigma(f)$ that enter in the flip from~$M_2$ to~$M_3$, are highlighted.
}
\label{fig:length}
\end{figure}
\end{proof}

\begin{lemma}
\label{lem:tree-size}
For even $n\geq 2$, any centrally symmetric matching lies in a component of~$\cH_n$ that contains exactly $n/2+1$ matchings.
\end{lemma}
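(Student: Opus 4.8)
My plan is to prove, by induction on the even integer $n\geq 2$, that every component of $\cH_n$ containing a centrally symmetric matching has exactly $n/2+1$ vertices. For $n=2$ there are exactly two matchings of the four circle points, joined by a single flip, which is centered since the sum of the four edge lengths of the spanned quadrilateral is $0=n-2$ (Lemma~\ref{lem:centered}); so the unique component has $2=2/2+1$ vertices. Throughout I would use the structure from Lemma~\ref{lem:tree}: the component $C$ of a centrally symmetric matching $M$ is a tree all of whose vertices lie in $\cS_n$; in any such matching an edge can be flipped only together with its $\sigma$-partner; since $n$ is even there is no diameter edge, so the $n$ edges split into exactly $n/2$ pairs $\{e,\sigma(e)\}$, which I call \emph{orbits}; and a single centered flip is local, replacing the two edges of one orbit by the other two sides of their quadrilateral and changing no other edge of the matching.

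For the inductive step I would fix $M$ and choose the orbit $O=\{\{i,i+1\},\{i+n,i+n+1\}\}$ of two antipodal perimeter edges, where $\{i,i+1\}$ is the clockwise-first perimeter edge of $M$ (some perimeter edge exists in every non-crossing perfect matching). Two observations drive the argument. First, in every matching of $C$ the orbit $O$ is in exactly one of two states: the perimeter pair above, or the \emph{maximal} pair $\{\{i+1,i+n\},\{i+n+1,i\}\}$ of two edges of length $\mu$; indeed flipping $O$ is a centered flip by Lemma~\ref{lem:centered} (the quadrilateral on $i,i+1,i+n,i+n+1$ has edge-length sum $2\mu=n-2$) and toggles these two states, while no flip of another orbit touches the edges of $O$. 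Thus $C=C_{\mathrm{per}}\sqcup C_{\mathrm{max}}$. Second, a matching in $C_{\mathrm{max}}$ has exactly two edges of length $\mu$ (no further edge can have length $\mu$, since the two arcs enclosed by the maximal pair are too short), hence has degree $1$ in $\cH_n$ by Theorem~\ref{thm:min-deg}; so $C_{\mathrm{max}}$ consists of pendant leaves, each joined by its $O$-flip to a matching in $C_{\mathrm{per}}$. Deleting the four endpoints of $O$ (a $\sigma$-orbit of points) and relabelling the remaining $2(n-2)$ points cyclically defines a reduction $R$ with $R(M')\in\cS_{n-2}$; since flipping $O$ does not change the internal matchings of the two arcs it encloses, $R$ identifies each matching of $C_{\mathrm{max}}$ with its $O$-flip image in $C_{\mathrm{per}}$.

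Let $C'$ be the component of $R(M)$ in $\cH_{n-2}$. I would then check that $R$ restricts to a bijection $C_{\mathrm{per}}\to C'$: flips not involving $O$ descend to centered flips in $\cH_{n-2}$ and conversely lift back, and each centrally symmetric matching on $2(n-2)$ points has a unique preimage under $R$, obtained by reinserting $O$ as a perimeter pair. By the induction hypothesis $|C_{\mathrm{per}}|=|C'|=(n-2)/2+1=n/2$, so $|C|=n/2+|C_{\mathrm{max}}|$, and it remains to prove $|C_{\mathrm{max}}|=1$. Under the bijection, $|C_{\mathrm{max}}|$ equals the number of matchings in $C'$ that are \emph{split} by the diameter $\rho_O$ through the two gaps of $O$, i.e.\ decompose as $N\cup\sigma(N)$ with no edge crossing $\rho_O$, and this is the step I expect to be the main obstacle. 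One direction is easy: the split matchings form an independent set in the tree $C'$, because in a split matching every orbit straddles $\rho_O$, so any centered flip produces two edges crossing $\rho_O$ and destroys splitness. The hard part is showing there is \emph{exactly} one split matching in $C'$; I would try to obtain this by tracking along the tree $C'$ a suitable strictly monotone potential (a length-based weight refined by a tie-breaking bit that separates the two equal-length states an orbit can occupy when its quadrilateral splits the remaining edges evenly), or by folding it into a slightly strengthened induction hypothesis that also controls the number of split matchings. Granting $|C_{\mathrm{max}}|=1$, we conclude $|C|=n/2+1$, completing the induction.
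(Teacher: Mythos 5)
Your reduction is set up with care and most of its steps are sound or easily repaired: the invariance of the four-point orbits under centered flips in centrally symmetric matchings, the identification of the matchings in $C_{\mathrm{max}}$ as degree-one vertices via Theorem~\ref{thm:min-deg}, and the correspondence between $C_{\mathrm{per}}$ and a component of $\cH_{n-2}$ (essentially the antipodal-deletion argument the paper itself uses in the proof of Theorem~\ref{thm:diam}). But the proof is not complete. Everything has been funneled into the single claim $|C_{\mathrm{max}}|=1$, i.e., that the orbit $O$ is flipped on exactly one edge of the tree, and for this you offer only two unimplemented ideas (a ``suitable strictly monotone potential'' or a ``strengthened induction hypothesis''). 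What you have actually established is only $|C|=n/2+|C_{\mathrm{max}}|$ with $|C_{\mathrm{max}}|\geq 0$: you have not shown $C_{\mathrm{max}}\neq\emptyset$ (existence of a split matching in $C'$), let alone uniqueness, and the independent-set observation bounds the count in neither direction. This is not a deferrable verification; it is where the entire content of the lemma sits.

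To see why, compare with the paper's proof, which is direct and needs no induction: it shows that each of the $n/2$ orbits $\{e,\sigma(e)\}$ is flipped on \emph{exactly one} edge of the tree $T$. ``At least once'' follows by exposing a hidden pair through flipping the pairs covering it from the innermost outward; ``at most once'' follows from a nesting argument: two orbits cannot both be in their long (diameter-straddling) state unless one pair is hidden behind the other, so once an orbit has been flipped to its long state and subsequently covered, it can never be flipped back along a non-backtracking path, and hence the same rectangle cannot label two distinct tree edges. This ``at most once'' statement, specialized to your orbit $O$, is precisely the bound $|C_{\mathrm{max}}|\leq 1$ you are missing (and ``at least once'' gives $|C_{\mathrm{max}}|\geq 1$). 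So the step you left open is not a loose end of an otherwise different proof --- it is the theorem itself, and the hiding/nesting argument is the ingredient you would need to supply to close it.
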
 

\begin{proof}
Let $M\in\cS_n$ be a centrally symmetric matching, and let $T$ be the component of~$\cH_n$ containing~$M$.
By Lemma~\ref{lem:tree} the component~$T$ is a tree containing only centrally symmetric matchings.

The matching $M$ consists of $n/2$ unordered pairs of edges $\{e,\sigma(e)\}$.
Any such pair of edges $\{e,\sigma(e)\}$ can flipped somewhere in~$T$.
Indeed, if $e$ and~$\sigma(e)$ are visible in~$M$, then we can flip them with a centered flip to reach a neighbor of~$M$ in~$T$.
On the other hand, if $e$ and~$\sigma(e)$ are not visible in~$M$, then they are hidden behind some other pairs of edges, and we can remove those by centered flips one after the other, starting from the innermost pair, until $e$ and~$\sigma(e)$ become visible and hence flippable together.
As every flip corresponds to an edge of~$T$, it follows that~$T$ has at least $n/2$ edges.

We now show that~$T$ has at most $n/2$ edges.
For this consider~$M$ and one of its neighbors~$M'$ on~$T$, and let $\{f,\sigma(f)\}$ be the edges that enter in the flip from~$M$ to~$M'$, and let $\{e,\sigma(e)\}$ be the edges that enter in the flip from~$M'$ to~$M$, i.e., the four edges $e,\sigma(e),f,\sigma(f)$ form the rectangle corresponding to this flip.
Consider a path in~$T$ that starts at~$M$ and moves to~$M'$ with its first flip.
Every matching encountered after~$M'$ on that path contains a pair of matching edges that hide~$f$ and~$\sigma(f)$, and so this flip involving~$f$ and~$\sigma(f)$ cannot occur again in this part of the tree~$T$.
Now consider a path in~$T$ that starts at~$M'$ and moves to~$M$ with its first flip.
Every matching encountered after~$M$ on that path contains a pair of matching edges that hide~$e$ and~$\sigma(e)$, and so this flip involving~$e$ and~$\sigma(e)$ cannot occur again in this part of the tree~$T$, either.
We conclude that~$T$ has at most $n/2$ edges.

Combining these observations, we conclude that~$T$ has exactly $n/2$ edges, and hence exactly $n/2+1$ vertices.
\end{proof}

\begin{lemma}
\label{lem:not-tree}
For even $n\geq 2$, any matching that is not centrally symmetric lies in a component of~$\cH_n$ that is not a tree.
\end{lemma}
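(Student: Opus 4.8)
The goal is to show that every matching $M\in\cM_n$ that is \emph{not} centrally symmetric lies in a component of $\cH_n$ containing a cycle. The natural strategy is to exhibit an explicit closed walk of centered flips starting and ending at $M$ that is not backtracking, i.e.\ that visits at least three distinct matchings. Since $M$ is not centrally symmetric, there is an edge $e\in M$ whose reflected partner $\sigma(e)$ is \emph{not} in $M$; equivalently, not every flippable pair in $M$ or its neighbors consists of an edge together with its own reflection. The plan is to use this asymmetry to find two ``independent'' centered flips available at (or near) $M$ that can be performed in either order, yielding a $4$-cycle in $\cH_n$.

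First I would reduce to a convenient situation. By Lemma~\ref{lem:side}, for even $n$ any antipodal line $\rho$ through two points of the circle meets exactly two visible edges $e$ and $f$, lying on opposite sides of $\rho$, and these are flippable together in a centered flip; moreover by Theorem~\ref{thm:deg} the number of centered flips in $M$ is at least half the number of visible edges, so $M$ always has a centered flip available. The first step is: if $M$ already has two centered flips whose four defining vertices (two quadrilaterals) are ``compatible'' — meaning the two flips can be applied in either order to reach the same matching, because the quadrilaterals are nested or disjoint in a way that leaves both flips simultaneously available — then those two flips generate a $4$-cycle through $M$ and we are done. So the real work is the remaining case, where $M$ has essentially a unique flip, or all available flips conflict pairwise.

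Next I would handle that remaining case by walking a few flips away from $M$ to create room. Concretely: perform one centered flip on $M$, involving a pair $e,f$ that are \emph{not} reflections of each other (such a pair exists precisely because $M\notin\cS_n$ and because the combinatorial characterization of centered flips via edge-length sums in Lemma~\ref{lem:centered} forces the flipped edges to straddle the center without being forced to be mirror images). The resulting matching $M'$ will have the two new edges placed so that some edge of $M'$ on ``the other side'' becomes visible and flippable with a further edge, producing a short detour of length $2$ or $3$ that does not simply undo the first flip. Because the whole configuration is not centrally symmetric, the detour cannot close up into a backtracking walk — a monotonicity-of-length argument along the lines of the proof of Lemma~\ref{lem:tree} would fail to apply, since the forced length increase there relied essentially on each flip involving an edge and its reflection. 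One then argues that the closed walk obtained has length at least $3$, hence the component of $M$ contains a cycle.

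The main obstacle will be the bookkeeping in that last step: making precise which secondary flip to perform after the first one, and verifying that the resulting closed walk is genuinely non-backtracking (visits $\geq 3$ vertices) rather than accidentally retracing its steps. I expect this to require a careful case distinction on the relative position of $e$, $f=\tau(e)$, their reflections, and the edges hidden behind them — quite analogous to the case analysis in the proof of Lemma~\ref{lem:4flips} (Cases 1 and 2 there), and probably illustrated by a figure. A cleaner alternative, which I would try first, is a counting argument: show directly that a tree component of $\cH_n$ on $k$ vertices would have exactly $k-1$ flips, but for a non-centrally-symmetric matching one can always exhibit at least $k$ distinct flips occurring in its component (each unordered flippable pair of edges contributes one, and asymmetry produces one extra relative to the $n/2$ bound that pins down the symmetric tree components in Lemma~\ref{lem:tree-size}), forcing a cycle. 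Reconciling this count with the edge-count of a tree is the crux, and pinning down the ``extra'' flip is where the asymmetry of $M$ must be used decisively.
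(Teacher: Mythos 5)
Your proposal is a plan rather than a proof: it offers three alternative strategies, but none of them is carried through, and the one concrete mechanism you put first --- two ``independent'' centered flips available at $M$ that can be applied in either order, producing a $4$-cycle --- cannot work at all. Any two centered flips at the same matching interfere: both defining quadrilaterals contain the circle center, and after performing one of the flips the two new edges are the remaining two sides of a quadrilateral containing the center; since the endpoints of the second quadrilateral lie outside the first one while the center lies inside both, at least one new edge crosses the closed quadrilateral of the second flip, so performing the second flip afterwards would create a crossing. Hence no pair of centered flips ``commutes'', there are no such $4$-cycles in $\cH_n$ (the shortest cycle in $\cH_4$, for instance, has length~$8$), and the reduction you propose for the ``remaining case'' is in fact the whole problem. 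The fallback strategies are equally unresolved: for the detour argument you never specify which secondary flip to take, and the observation that the length-monotonicity obstruction from the symmetric case ``fails to apply'' does not produce a cycle; for the counting argument you yourself identify the existence of the ``extra'' flip as the crux and leave it open. So the essential idea needed to prove the lemma is missing.

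For comparison, the paper's proof isolates exactly such an idea: call a matching \emph{nice} if it has a visible \emph{non-symmetric} edge, i.e.\ a visible edge $e$ with $\sigma(e)\notin M$. Choosing a visible non-symmetric edge $e$ of \emph{maximum length}, one shows first that no edge of $M$ can hide the endpoints $x,y$ of the absent edge $\sigma(e)$ (such an edge would be a longer visible non-symmetric edge), so two further visible edges meet the line through $x$ and $y$, and their reflections would cross $e$, making them non-symmetric as well; thus a nice matching has at least three visible non-symmetric edges. Second, flipping a maximum-length visible non-symmetric edge in a centered flip yields again a nice matching, because at most one of the two new edges can have its reflected partner present (the partners would have to be hidden behind the respective other new edge, forcing contradictory length inequalities). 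Consequently every nice matching has two distinct nice neighbours in $\cH_n$, so by finiteness the nice matchings span a subgraph of minimum degree two, whose components all contain cycles; and every matching in $\cM_n\setminus\cS_n$ reaches a nice matching by centered flips. Some argument of this shape --- an extremal choice of a non-symmetric edge plus a minimum-degree-two/finiteness conclusion --- is what would be needed to complete your outline; as it stands, the proposal has a genuine gap.
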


For the proof of Lemma~\ref{lem:not-tree} we will need the following definitions:
We say that an edge~$e$ in a matching~$M$ is \emph{non-symmetric}, if the edge $\sigma(e)$ is not in~$M$.
We also say that a matching~$M\in\cM_n\setminus\cS_n$ is \emph{nice}, if one of its non-symmetric edges is visible.
The strategy for the proof is to show that every matching is connected to a nice matching in~$\cH_n$, and that every nice matching lies on a cycle in~$\cH_n$.

\begin{proof}
We claim that every nice matching~$M$ has two distinct nice matchings as neighbors in the graph~$\cH_n$.
As the number of matchings is finite, this implies that a nice matching must lie in a cycle of~$\cH_n$, i.e., in a component that is not a tree.
The lemma follows from this, by observing that from every matching~$M\in\cM_n\setminus\cS_n$, we can reach a nice matching by centered flips.

\begin{figure}
\includegraphics{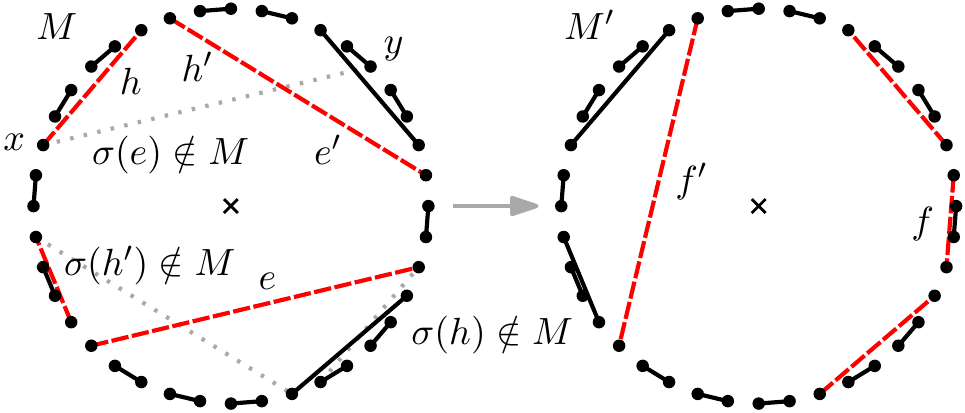}
\caption{
Notations used in the proof Lemma~\ref{lem:not-tree}.
Visible non-symmetric matching edges are dashed.
The dotted lines indicate the images of the edges~$e$, $h$ and~$h'$ under point reflection at the circle center, and these images are \emph{not} edges of the matching~$M$.
}
\label{fig:cycle}
\end{figure}

So let $M\in\cM_n\setminus\cS_n$ be a nice matching, and let $e$ be any visible non-symmetric edge of maximum length in~$M$; see Figure~\ref{fig:cycle}.
Consider an edge~$e'$ that forms a centered flip together with~$e$, and let~$M'$ be the matching obtained by performing this flip.
Moreover, let $f$ and $f'$ be the edges that appear in~$M'$ with this flip.
Note that $f\neq\sigma(f')$, as $e$ is non-symmetric.
Moreover, if $\sigma(f)\in M'$, then $\sigma(f)$ must be hidden behind~$f'$, implying that $\ell(f)<\ell(f')$.
Symmetrically, if $\sigma(f')\in M'$, then $\sigma(f')$ must be hidden behind~$f$, implying that $\ell(f')<\ell(f)$.
It follows that at least one of $f$ or $f'$ is non-symmetric and visible in~$M'$.

We now argue that $M$ contains at least three visible non-symmetric edges, so there are at least two nice neighbors of~$M$ in~$\cH_n$.
Let $x$ and~$y$ be the endpoints of the edge~$\sigma(e)$ (which is \emph{not} present in~$M$).
Observe that there is no edge~$g$ in~$M$ that hides $x$ and~$y$, or that hides~$x$ and has~$y$ as one of its endpoints, or that hides~$y$ and has~$x$ as one of its endpoints.
Clearly, $\sigma(g)\notin M$, as the edge~$e$ is visible.
Therefore, $g$ would have to be non-symmetric, but this contradicts our choice of~$e$ as the visible non-symmetric edge of maximum length.
It follows that there are two visible edges $h,h'$ in~$M$ that intersect the line between~$x$ and~$y$.
For both of them, we have $\sigma(h)\notin M$ and $\sigma(h')\notin M$, as the edges $\sigma(h)$ and $\sigma(h')$ would intersect the edge~$e\in M$.
Consequently, $h$ and~$h'$ are both visible and non-symmetric, so $M$ has at least three visible non-symmetric edges, as claimed.
\end{proof}

With these lemmas in hand, we are now ready to prove Theorem~\ref{thm:struct}.

\begin{proof}[Proof of Theorem~\ref{thm:struct}]
Combine Lemmas~\ref{lem:point}, \ref{lem:tree}, \ref{lem:tree-size}, and~\ref{lem:not-tree}, and observe that $\frac{1}{n/2+1}\binom{n}{n/2}=C_{n/2}$.
\end{proof}

\subsection{Weights of matchings}

For our further investigations, we assign an integer weight to each matching, again using the labels $1,\ldots,2n$ of the points.
Consider a matching~$M\in\cM_n$ and one of its edges~$e\in M$, and let~$i$ and~$j$ be the endpoints of~$e$ so that the circle center lies to the right of the ray from~$i$ to~$j$.
We define the \emph{sign of the edge~$e$} as
\begin{equation*}
\sgn(e):=\begin{cases}
          +1 & \text{if $i$ is odd}, \\
          -1 & \text{if $i$ is even}.
         \end{cases}
\end{equation*}
We call an edge~$e$ \emph{positive} if $\sgn(e)=+1$, and we call it \emph{negative} if $\sgn(e)=-1$. 
Moreover, we define the \emph{weight} of the matching~$M$ as
\begin{equation*}
w(M):=\sum_{e \in M} \sgn(e) \cdot \ell(e).
\end{equation*}
These definitions are illustrated in Figure~\ref{fig:weight}.
Note that rotating a matching by~$\pi/n$, either clockwise or counterclockwise, changes the weight by a factor of~$-1$.

\begin{figure}
\includegraphics{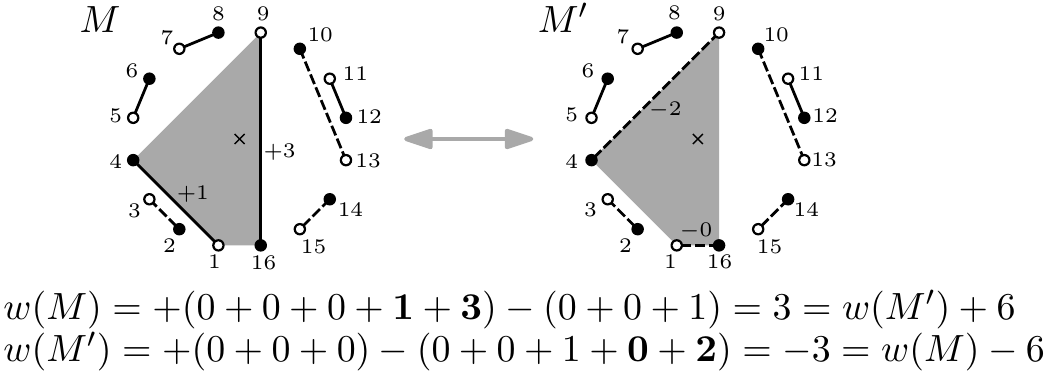}
\caption{Illustration of the weight of two matchings with~$n=8$ edges.
Odd points are drawn as white bullets, even points as black bullets.
Positive edges are drawn solid, negative edges are drawn dashed.
Note that the centered flip changes the weight by $\pm (n-2)=\pm 6$.
}
\label{fig:weight}
\end{figure}

Observe also that a quadrilateral corresponding to a centered flip has two positive edges and two negative edges, and the pairs of edges with the same sign are opposite to each other on the quadrilateral.
Combining this observation with Lemma~\ref{lem:centered} shows that any centered flip changes the weight of a matching by~$\pm(n-2)$.
Moreover, it was shown in~\cite{MR4046775} that all possible weights are in a particular integer range.
These results are summarized in the following lemma, which is illustrated in Figure~\ref{fig:weight}.

\begin{lemma}[Lemmas~11+12 in \cite{MR4046775}]
\label{lem:weight}
Let $n\geq 2$ be even.
Applying a centered flip to any matching from~$\cM_n$ changes its weight by~$-(n-2)$ if the two negative edges appear in this flip, or by~$+(n-2)$ if the two positive edges appear in this flip, and flips of these two kinds must alternate along any sequence of centered flips.
Moreover, for any matching~$M\in\cM_n$ we have
\begin{equation*}
w(M) \in [-(n-2), n-2] := \big\{-(n-2), -(n-2) +1, \dots, n-3, n-2 \big\},
\end{equation*}
and each of these weight values is attained for some matching in~$\cM_n$.
\end{lemma}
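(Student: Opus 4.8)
The statement is \cite[Lemmas~11 and~12]{MR4046775}, and much of the first assertion is already contained in the discussion preceding the lemma, so I would only spell it out. In a centered flip, label the quadrilateral's vertices $A,B,C,D$ in clockwise order. Each of its four sides is an edge of $M$ or of $M'$, hence a chord cutting off an arc whose points are matched among themselves (by non-crossingness), so each of the four arcs $AB$, $BC$, $CD$, $DA$ contains an even number of points. Therefore $A,B,C,D$ alternate in parity, so the two opposite sides that lie in $M$ carry the same sign, the two opposite sides that lie in $M'$ carry the same sign, and these two signs differ. Writing $w(M')-w(M)=\sgn(g)\ell(g)+\sgn(h)\ell(h)-\sgn(e)\ell(e)-\sgn(f)\ell(f)$ for the removed edges $e,f$ and the inserted edges $g,h$, and invoking Lemma~\ref{lem:centered} (the four side lengths sum to $n-2$), gives exactly $-(n-2)$ when the inserted edges are the two negative ones and $+(n-2)$ when they are the two positive ones.

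For the range $w(M)\in[-(n-2),n-2]$ I would first observe that rotating a matching by $\pi/n$ negates its weight, so it suffices to prove the upper bound $w(M)\le n-2$; this inequality is the technical heart of the lemma. I would attack it by induction on even $n$: the base $n=2$ is trivial since then $n-2=0$ and all lengths are $0$, and for the step one deletes from $M$ a small configuration of edges (for instance a suitably chosen pair of perimeter edges, reducing to a matching in $\cM_{n-2}$) and carefully tracks the resulting change of the weight, using the local facts that deleting a perimeter edge changes every other edge's length by at most $1$ and that the relabeling preserves parities. Making this bookkeeping close the induction is where I expect the main difficulty to lie; in \cite{MR4046775} the bound is obtained by exactly such a structural analysis of the nesting pattern of $M$.

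Granting the bound, the alternation statement follows by a monotonicity argument. A flip inserting the two positive edges raises the weight by $n-2$, so (since the result lies in $[-(n-2),n-2]$) it can only be applied to a matching of weight $\le 0$ and leaves one of weight $\ge 0$; symmetrically, a flip inserting the two negative edges requires weight $\ge 0$ before and leaves weight $\le 0$. Hence any matching admitting both kinds of centered flip must have weight exactly $0$. If a sequence of centered flips contained two consecutive flips of the same kind, say inserting positive edges, at $M_1\to M_2\to M_3$, then necessarily $w(M_1)=-(n-2)$, $w(M_2)=0$, $w(M_3)=n-2$, and $M_2$ admits a flip inserting positive edges (toward $M_3$) as well as the reverse flip $M_2\to M_1$, which inserts negative edges; thus $M_2$ admits both kinds of centered flip. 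It remains to exclude a weight-$0$ matching admitting both kinds, which is the genuine residual point: here one invokes the structural description of weight-extremal matchings in \cite{MR4046775} (a matching of weight $\pm(n-2)$ admits only the single kind of centered flip moving the weight back toward $0$, and a weight-$0$ neighbor of such a matching in $\cH_n$ inherits this restriction). Finally, that every value in $[-(n-2),n-2]$ is attained is verified by explicit examples: a matching with exactly two positive edges whose lengths sum to $2\mu=n-2$ and all other edges of length $0$ realizes the maximum, varying those two lengths and padding with short length-$0$ edges realizes every value in between, and applying a $\pi/n$ rotation covers the negative values.
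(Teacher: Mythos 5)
The first thing to note is that the paper does not actually prove this lemma: it is imported verbatim from \cite{MR4046775} (Lemmas~11 and~12 there), and the only ingredient argued in the text is the $\pm(n-2)$ weight change, obtained exactly as in your first paragraph from the observation that the flip quadrilateral has two positive and two negative edges, in opposite pairs, combined with Lemma~\ref{lem:centered}. That part of your write-up is correct and matches the paper. Deferring the range $w(M)\in[-(n-2),n-2]$ to \cite{MR4046775} is likewise consistent with the paper's own treatment, so I do not count it as a gap, although your induction sketch (deleting perimeter edges and re-doing the bookkeeping) is not obviously workable as stated; a direct argument is available via the ray-weights that appear later in the proof of Lemma~\ref{lem:segment}: $w(M)$ equals the sum over the $n$ odd points $k$ of the ray-weights $w(r_k)\in\{-1,0,+1\}$, every edge has one odd and one even endpoint (the points on either side of an edge are matched among themselves, hence even in number), and the rays through the endpoints of the at least two visible edges cross no matching edge and contribute~$0$, whence $|w(M)|\le n-2$.

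The genuine gap is in your alternation argument. As you yourself observe, the weight range alone does not force alternation, because two consecutive flips of the same kind are numerically consistent with it via $w(M_1)=-(n-2)$, $w(M_2)=0$, $w(M_3)=n-2$; but your way of excluding this residual case, namely that ``a weight-$0$ neighbor of such a matching in $\cH_n$ inherits this restriction,'' is an assertion rather than an argument: nothing about which centered flips $M_2$ admits follows from its neighbors being weight-extremal, and that is precisely what would have to be proved. The clean route (essentially that of \cite{MR4046775}) does not use the weight range at all. For even $n$, the points hidden behind a visible edge are matched among themselves and hence even in number, so the distinguished endpoints (those from which the circle center lies to the right) of consecutive visible edges have equal parity; consequently \emph{all} visible edges of a matching carry the same sign. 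Moreover, the two edges removed by a centered flip are visible (the flip quadrilateral is empty and contains the center), and the two edges it creates are visible in the resulting matching. Hence a centered flip that creates two positive edges leaves a matching all of whose visible edges are positive, so the next centered flip must remove two positive edges and create two negative ones, and symmetrically; this yields the alternation directly and also re-derives your single-flip sign computation. I recommend replacing the monotonicity argument by this parity argument (or by an honest citation of Lemma~11 of \cite{MR4046775}, as the paper does).
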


Our next result is an immediate consequence of this lemma.

\begin{corollary}
\label{cor:comp}
For even $n\geq 4$, the graph $\cH_n$ has at least $C_{n/2}+n-3$ components.
\end{corollary}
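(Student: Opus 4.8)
The plan is to combine the structural description in Theorem~\ref{thm:struct} with the weight machinery of Lemma~\ref{lem:weight}. By Theorem~\ref{thm:struct}, the graph~$\cH_n$ has exactly $C_{n/2}$ components that are trees, namely the ones consisting of centrally symmetric matchings, and every matching that is \emph{not} centrally symmetric lies in a component that is \emph{not} a tree. The tree components and the non-tree components thus partition the set of all components of~$\cH_n$, so it suffices to show that $\cH_n$ has at least $n-3$ non-tree components; adding the $C_{n/2}$ tree components then yields the bound.

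To bound the number of non-tree components from below I would use the weight function~$w$. By Lemma~\ref{lem:weight}, every centered flip changes the weight by~$\pm(n-2)$, and along any sequence of centered flips the two signs alternate; applying this to a walk of length two shows that the weight returns to its starting value after every two steps. Consequently, all matchings lying in one component of~$\cH_n$ have weights in a common two-element set $\{k,k+(n-2)\}$. In particular, if two matchings have distinct weights that both lie in $\{1,2,\dots,n-3\}$, then those weights differ by a nonzero amount strictly smaller than~$n-2$, so the two matchings lie in different components of~$\cH_n$. Hence it is enough to exhibit, for each $v\in\{1,2,\dots,n-3\}$, a matching $M_v\in\cM_n$ that is not centrally symmetric and has weight~$v$: these $n-3$ matchings then lie in $n-3$ pairwise distinct components, each of which is a non-tree component because $M_v$ is not centrally symmetric.

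Such matchings~$M_v$ can be built explicitly from perimeter edges together with one or two longer edges. If $v\le\mu$, take the matching containing the single edge $\{1,2v+2\}$ and perimeter edges on all remaining points; its only non-perimeter edge has length~$v$, so $w(M_v)=\pm v$. If $\mu<v\le n-3=2\mu-1$, take a matching containing one edge of length~$\mu$ and one edge of length~$v-\mu\le\mu-1$, placed in the two halves of the circle so that the two edges receive the same sign, with perimeter edges elsewhere; again $w(M_v)=\pm v$. In either case, rotating by~$\pi/n$ if necessary changes the sign of the weight while preserving non-central-symmetry, so we may assume $w(M_v)=v$; and $M_v$ is not centrally symmetric because the image of one of its non-perimeter edges under point reflection at the circle center is not an edge of~$M_v$. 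The main (though minor) obstacle is exactly this bookkeeping step: verifying with the point labels $1,\dots,2n$ and the definition of~$\sgn$ that the chosen edges have the stated lengths, that in the second case the two long edges really share a sign, and that $M_v\notin\cS_n$.
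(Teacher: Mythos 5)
Your proposal is correct and follows essentially the same route as the paper's proof: the $C_{n/2}$ components of centrally symmetric matchings from Theorem~\ref{thm:struct}, plus $n-3$ explicitly constructed non-centrally-symmetric matchings of weights $1,\ldots,n-3$ (one long edge, or one edge of length~$\mu$ plus one shorter edge, with perimeter edges elsewhere), separated into distinct components via the weight invariant of Lemma~\ref{lem:weight}. The only cosmetic differences are that you argue distinctness from the symmetric components via the tree/non-tree dichotomy rather than directly via central symmetry, and that you spell out the two-valued weight structure of a component and the sign bookkeeping slightly more explicitly than the paper does.
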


\begin{proof}
By Theorem~\ref{thm:struct}, the graph $\cH_n$ has exactly~$C_{n/2}$ components that contain all centrally symmetric matchings.
Moreover, for $c=1,\ldots,n-3$, we can easily construct a matching that is not centrally symmetric and has weight~$c$.
Indeed, for $c=1,\ldots,\mu$, we take a matching that has a single edge of length~$c$, and all other edges are perimeter edges (length~0).
For $c=\mu+1,\ldots,2\mu-1=n-3$, we take a matching with a single edge of length~$\mu$, another edge of length~$c$, and all other edges are perimeter edges.
By Lemma~\ref{lem:weight}, these $n-3$ matchings all lie in distinct components of~$\cH_n$, and they must be different from the~$C_{n/2}$ components containing the centrally symmetric matchings.
This implies the claimed lower bound.
\end{proof}

Motivated by Lemma~\ref{lem:weight}, we partition the set of all matchings~$\cM_n$ according to their weights.
Specifically, for any non-zero integer $c\in[-(n-2),(n-2)]$, we let $\cW_{n,c}$ be the set of all matchings from~$\cM_n$ with weight exactly~$c$.
For the special case $c=0$ we define $\cW_{n,0}:=\{M_0\}$ and $\cW_{n,0}^-:=\{M_0^-\}$, where $M_0$ is the matching that has only perimeter edges and all of them positive, and $M_0^-$ is the matching that has only perimeter edges and all of them negative.
Clearly, we have $w(M_0)=w(M_0^-)=0$.
Moreover, for $c=0,1,\dots,n-2$ we define
\begin{equation}
\label{eq:Mmc}
\cM_{n,c}:=\begin{cases} \cW_{n,c}\cup \cW_{n,c-(n-2)} & \text{if } c\leq n-3, \\
                         \cW_{n,n-2}\cup \cW_{n,0}^- & \text{if } c=n-2,
                         \end{cases}
\end{equation}
i.e., the set $\cM_{n,c}$ contains all matchings that have either the same weight or whose weights differ by~$n-2$.

We now establish explicit formulas for the cardinalities of the sets~$\cW_{n,c}$ and~$\cM_{n,c}$, proving a conjecture raised in~\cite{MR4046775} that expresses these quantities via \emph{generalized Narayana numbers $N_r(n,k)$}, defined as
\begin{equation}
\label{eq:narayana}
N_r(n,k) = \frac{r+1}{n+1} \binom{n+1}{k} \binom{n-r-1}{k-1}
\end{equation}
for any integers~$n\geq 1$, $r\geq 0$, and $1\leq k\leq n-r$; see~\cite{narayana-seq}.
The standard Narayana numbers are obtained for $r=0$.

\begin{theorem}
\label{thm:narayana}
For even $n\geq 2$ and any $c=0,1,\ldots,n-2$, we have $|\cW_{n,c}|=N_1(n,|c|+1)/2$.
\end{theorem}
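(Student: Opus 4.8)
The plan is to translate the weight $w(M)$ into a statistic of the nesting structure of $M$, and then enumerate.

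\emph{Step 1 (a combinatorial formula for the weight).} Fix the labeling $1,\dots,2n$ and cut the circle in the gap between $2n$ and $1$, so that each $M\in\cM_n$ becomes a non-crossing matching of the linearly ordered points, equivalently a plane forest $F$ on $n$ nodes (one node per edge of $M$; $f$ is a descendant of $e$ iff $f$ is nested inside $e$). Write $\delta(e)$ for the depth of $e$ in $F$ (outermost edges have depth $1$) and $\nu(e)$ for its number of proper descendants. Since every edge joins an odd to an even point and the parity of the position of a point equals the parity of the height of $M$'s Dyck-path reading there, one checks that $\sgn(e)=+1$ exactly when either $\nu(e)<n/2$ and $\delta(e)$ is odd, or $\nu(e)\ge n/2$ and $\delta(e)$ is even, with $\ell(e)=\nu(e)$ in the first case and $\ell(e)=n-1-\nu(e)$ in the second. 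Call $e$ \emph{deep} if $\nu(e)\ge n/2$; two incomparable deep edges would have together more than $n$ descendants, so the deep edges form a chain $e_1\supsetneq\dots\supsetneq e_m$ with $\delta(e_k)=k$. Substituting these into $w(M)=\sum_{e\in M}\sgn(e)\ell(e)$, and using the telescoping identity $\sum_{e\in M}(-1)^{\delta(e)+1}\nu(e)=\#\{e\in M:\delta(e)\text{ even}\}$ together with $\sum_{k=1}^m(-1)^k=-[m\text{ odd}]$, one obtains
$$w(M)=\#\{e\in M:\delta(e)\text{ even}\}-(n-1)\cdot[m\text{ odd}].$$

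\emph{Step 2 (reduction).} Rotating $M$ by $\pi/n$ is a weight-negating bijection of $\cM_n$, so $|\cW_{n,c}|=|\cW_{n,-c}|$ and we may assume $c\ge0$; for $c=0$ we have $\cW_{n,0}=\{M_0\}$ and $N_1(n,1)/2=1$. For $c\ge1$, the formula of Step 1 forces $m$ to be even whenever $w(M)=c$ (odd $m$ gives $w(M)\le0$), and then $w(M)=\#\{e:\delta(e)\text{ even}\}$; conversely the matchings with $\#\{e:\delta(e)\text{ even}\}=c$ and $m$ odd are exactly those of weight $c-(n-1)$, of which there are $|\cW_{n,n-1-c}|$ by the rotation symmetry. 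Hence, setting $A_c:=\#\{M:\#\{e:\delta(e)\text{ even}\}=c\}$ and $B_c:=\#\{M:\#\{e:\delta(e)\text{ even}\}=c,\ m\text{ odd}\}$, we have $|\cW_{n,c}|=A_c-B_c$ and $B_c=|\cW_{n,n-1-c}|$.

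\emph{Step 3 (enumeration).} Adjoining a virtual root, $A_c$ becomes the number of plane trees on $n+1$ nodes with $c+1$ nodes at even depth, equivalently $n-c$ nodes at odd depth; this equals the Narayana number $N(n,c+1)=\tfrac1n\binom{n}{c+1}\binom{n}{c}$ by the classical equidistribution of the odd-level statistic on plane trees with the leaf (peak) statistic. To evaluate $B_c$ I would decompose a matching with a nonempty deep chain along that chain: deleting $e_1,\dots,e_m$ leaves the part outside $e_1$, the sub-matchings hanging off $e_1,\dots,e_{m-1}$ alongside the chain, and the part inside $e_m$, whose sizes are constrained by the thresholds $\nu(e_k)\ge n/2$ and whose even-depth counts combine according to the parities of $1,\dots,m$. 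Extracting the generating function for odd $m$ should yield $B_c=N_1(n,n-c)/2$, and then
$$|\cW_{n,c}|=N(n,c+1)-\tfrac12 N_1(n,n-c)=\tfrac12 N_1(n,c+1),$$
using the identity $N(n,c+1)=\tfrac12\big(N_1(n,c+1)+N_1(n,n-c)\big)$, which reduces to Pascal's rule $\binom{n-2}{c}+\binom{n-2}{c-1}=\binom{n-1}{c}$.

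\emph{Main obstacle.} The difficulty lies entirely in controlling the \emph{deep-chain parity} in Step 3: the statistic ``$\#$ even-depth edges'' decomposes cleanly over the nesting forest, which is why $A_c$ is an easy Narayana count, but the deep chain is defined by the global threshold $\nu(e)\ge n/2$ and so interacts awkwardly with any recursive decomposition, so that the joint distribution of these two quantities is the real content. An alternative to the generating-function computation would be to build an explicit bijection from $\cW_{n,c}$ to a family of lattice paths or polygon dissections of cardinality $N_1(n,c+1)/2$; in either route the generalized Narayana numbers should enter only through the elementary identity above.
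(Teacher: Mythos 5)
Your Steps 1 and 2 are correct and I could verify them in detail: the dictionary $\sgn(e)$/$\ell(e)$ in terms of $\nu(e)$ and the depth parity is right (the left endpoint of $e$ has the parity of $\delta(e)$), the deep edges do form an initial chain of the depth order, the telescoping identity holds, and so $w(M)=\#\{e:\delta(e)\text{ even}\}-(n-1)\cdot[m\text{ odd}]$. Granting the classical equidistribution of the odd-level statistic with the leaf statistic on plane trees, this yields, for $1\le c\le n-2$, the genuine relation $|\cW_{n,c}|+|\cW_{n,n-1-c}|=N_0(n,c+1)$. The gap is entirely in Step 3, and it is not a technicality: your $B_c$ equals $|\cW_{n,n-1-c}|$, so the claim $B_c=N_1(n,n-c)/2$ \emph{is} the theorem (for the complementary weight value), and you have not proved it — the decomposition along the deep chain is only described as something that ``should yield'' the answer, and you yourself call the joint distribution of the even-depth count and the deep-chain parity ``the real content''. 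Moreover, the relation you did establish cannot be bootstrapped into the answer: substituting $c\mapsto n-1-c$ reproduces the same equation because $N_0(n,n-c)=N_0(n,c+1)$, so the system of equations is rank-deficient and determines only the sums $|\cW_{n,c}|+|\cW_{n,n-1-c}|$, never the individual cardinalities. As it stands the argument is circular at the decisive point.

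For comparison, the paper supplies exactly the missing ingredient by a different route: it shows that for a matching of positive weight the weight of each visible segment $M_e$ equals Osborn's band-weight of the Dyck path $\varphi(M_e^-)$ (Lemma~\ref{lem:segment}), then applies Osborn's bijection segmentwise to convert band-weight into peak count, obtaining a bijection between $\cW_{n,c}\cup\cW_{n,-c}$ and the matchings with exactly $n-c$ perimeter edges (Lemma~\ref{lem:WL}), which are counted directly by $N_1(n,c+1)$ (Lemma~\ref{lem:N1}); rotation symmetry then halves the count. If you want to complete your approach, you need an independent evaluation of $B_c$ — e.g., an honest generating-function or bijective analysis of matchings with an odd deep chain refined by the even-depth statistic, coping with the global threshold $\nu(e)\ge n/2$ — or else import something equivalent to the Osborn band-weight argument; the final identity $N_0(n,c+1)=\tfrac12\bigl(N_1(n,c+1)+N_1(n,n-c)\bigr)$ is fine but does no work until that count is in hand.
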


By Lemma~\ref{lem:weight}, there are no centered flips between any matchings from two distinct sets $\cM_{n,c}$, $c=0,1,\ldots,n-2$, i.e., the cardinalities $|\cM_{n,c}|$ are an upper bound for the size of the components of the graph~$\cH_n$.
We can now compute these bounds explicitly.

\begin{corollary}
\label{cor:size}
For even $n\geq 2$, every component of~$\cH_n$ has at most $N_1(n,n/2)$ vertices.
Asymptotically, this is a $2/\sqrt{\pi n}(1+o(1))$-fraction of all vertices of the graph.
\end{corollary}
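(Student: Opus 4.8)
The plan is to combine Theorem~\ref{thm:narayana} with the observation (stated just before the corollary, coming from Lemma~\ref{lem:weight}) that no centered flip connects matchings in distinct sets $\cM_{n,c}$. This immediately gives that every component of $\cH_n$ is contained in some $\cM_{n,c}$ with $c\in\{0,1,\ldots,n-2\}$, so its size is at most $\max_c |\cM_{n,c}|$. First I would compute $|\cM_{n,c}|$ from the definition~\eqref{eq:Mmc}: for $c\leq n-3$ we have $|\cM_{n,c}|=|\cW_{n,c}|+|\cW_{n,c-(n-2)}|$, and since $|c|$ and $|c-(n-2)|=n-2-c$ are the two ways of writing the same unordered pair, Theorem~\ref{thm:narayana} yields $|\cM_{n,c}|=\tfrac12\big(N_1(n,c+1)+N_1(n,n-1-c)\big)$; for $c=n-2$ the same expression holds since $|\cW_{n,0}^-|=1=N_1(n,1)/2\cdot 2$ needs a tiny separate check against the $c=0$ convention. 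So the bound on component size is $\max_{0\le c\le n-2}\tfrac12\big(N_1(n,c+1)+N_1(n,n-1-c)\big)$.

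Next I would show this maximum equals $N_1(n,n/2)$. By~\eqref{eq:narayana} the generalized Narayana numbers $N_1(n,k)=\tfrac{2}{n+1}\binom{n+1}{k}\binom{n-2}{k-1}$ are symmetric in $k\leftrightarrow n+1-k$ when we also shift the second index appropriately; more directly, the function $k\mapsto N_1(n,k)$ on the relevant range is unimodal with a single peak, so the average of the two "mirror-image" values $N_1(n,c+1)$ and $N_1(n,n-1-c)$ is maximized when both indices are as central as possible, i.e.\ when $c+1$ and $n-1-c$ coincide, which happens at $c=n/2-1$ (legal since $n$ is even), giving the common value $N_1(n,n/2)$. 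Hence every component has at most $N_1(n,n/2)$ vertices. I would establish the unimodality of $N_1(n,\cdot)$ by examining the ratio $N_1(n,k+1)/N_1(n,k)=\frac{(n+1-k)(n-1-k)}{k(k+1)}$ (a quotient of binomials) and checking it crosses $1$ exactly once — a short monotonicity computation.

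For the asymptotic statement I would divide $N_1(n,n/2)$ by $|\cM_n|=C_n$. Writing $N_1(n,n/2)=\tfrac{2}{n+1}\binom{n+1}{n/2}\binom{n-2}{n/2-1}$ and $C_n=\tfrac{1}{n+1}\binom{2n}{n}$, the ratio simplifies to a product of central-type binomial coefficients; applying the standard estimate $\binom{2m}{m}\sim 4^m/\sqrt{\pi m}$ to each factor (with $m\approx n/2$ and $m\approx n$) the powers of $4$ cancel and one is left with $2/\sqrt{\pi n}\,(1+o(1))$. The main obstacle is not any single step but being careful with the two off-center conventions at $c=0$ and $c=n-2$ (where $\cW_{n,0}$, $\cW_{n,0}^-$, and the shift $c-(n-2)$ interact) so that the clean formula $|\cM_{n,c}|=\tfrac12(N_1(n,c+1)+N_1(n,n-1-c))$ genuinely holds uniformly; once that bookkeeping is done, the unimodality argument and the asymptotics are routine.
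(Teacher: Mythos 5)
Your proposal is correct and takes essentially the same route as the paper's proof: bound every component by $\max_c|\cM_{n,c}|$ via Lemma~\ref{lem:weight} and Theorem~\ref{thm:narayana}, locate the maximum of $N_1(n,\cdot)$ at $k=n/2$ by a short monotonicity computation, and get the asymptotics from the central binomial estimate. The only cosmetic differences are that you write out the formula $|\cM_{n,c}|=\tfrac12\bigl(N_1(n,c+1)+N_1(n,n-1-c)\bigr)$ together with the boundary conventions at $c=0$ and $c=n-2$ explicitly, and you argue unimodality via the ratio of consecutive values rather than their difference.
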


\begin{proof}
We first show that for even $n\geq 2$, the number~$N_1(n,k)$ is maximized for $k=n/2$.
Indeed, using the definition~\eqref{eq:narayana} and the definition of binomial coefficients, we obtain that $N_1(n,k+1)-N_1(n,k)$ is positive for all $k<x:=n/2-1/4-3/(4(2n+1))$ and negative for all $k>x$.
Note that $n/2-1<x<n/2$ for $n\geq 2$, and consequently, $N_1(n,k)$ is maximized for $k=n/2$.
Using Theorem~\ref{thm:narayana} and~\eqref{eq:Mmc}, it follows that $|\cM_{n,c}|$ is maximized when $c+1=n/2$, and the value of this quantity is $2N_1(n,n/2)/2=N_1(n,n/2)$ for this value of~$c$.
This proves the first part of the lemma.

To prove the second part, recall that the total number of vertices of~$\cH_n$ is $C_n$, the $n$th Catalan number.
By using the definition $C_n=\frac{1}{n+1}\binom{2n}{n}$ and~\eqref{eq:narayana}, the asymptotic value of the fraction $N_1(n,n/2)/C_n$ can be determined as
\begin{equation}
\label{eq:comp-frac}
\frac{N_1(n,n/2)}{C_n}=\frac{\frac{2}{n+1}\binom{n+1}{n/2}\binom{n-2}{n/2-1}}{\frac{1}{n+1}\binom{2n}{n}}=\frac{2\cdot\frac{n+1}{n/2+1}\binom{n}{n/2}\cdot\frac{(n/2)^2}{n(n-1)}\binom{n}{n/2}}{\binom{2n}{n}}=\frac{\binom{n}{n/2}^2}{\binom{2n}{n}}(1+o(1))
\end{equation}
Applying the standard estimate $\binom{2n}{n}=4^n/\sqrt{\pi n}(1+o(1))$ for the central binomial coefficients, which follows from Stirling's formula, on the right-hand side of~\eqref{eq:comp-frac} shows that this fraction equals $2/\sqrt{\pi n}(1+o(1))$, as claimed.
\end{proof}

\subsection{Proof of Theorem~\ref{thm:narayana}}
\label{sec:narayana}

The proof of Theorem~\ref{thm:narayana} is split into several lemmas.

A \emph{non-negative lattice path} is a path in the integer lattice~$\mathbb{Z}^2$ that starts at the origin, and that consist of \emph{upsteps} that change the current coordinate by~$(+1,+1)$ and of \emph{downsteps} that change the current coordinate by~$(+1,-1)$, and that never moves below the line~$y=0$; see Figure~\ref{fig:dyck}.
If the number of upsteps equals the number of downsteps, then such a lattice path is called a \emph{Dyck path}.
The set of all Dyck paths is denoted by~$D_n$, and their number is known to be $|D_n|=C_n$.
A \emph{peak} in a non-negative lattice path is an upstep immediately followed by a downstep.
Combinatorially, the generalized Narayana numbers~$N_r(n,k)$ count non-negative lattice paths with $n$ upsteps, $n-r$ downsteps, and exactly $k$~peaks.
In particular, $N_0(n,k)$ counts Dyck paths of length~$2n$ with exactly $k$~peaks, i.e., we have $C_n=\sum_{k=1}^n N_0(n,k)$.

We now consider another parameter for Dyck paths, introduced by Osborn~\cite{MR2732117}.
Given a Dyck path~$x\in D_n$, we consider each of the horizontal strips $\mathbb{R}\times [2i,2i+1]$ for $i=0,1,\ldots$, and we count how many upsteps of~$x$ lie in one of these strips.
The resulting count is the \emph{band-weight of~$x$}, denoted $w(x)$.
This definition is illustrated in the top part of Figure~\ref{fig:segment}.

\begin{lemma}[\cite{MR2732117}]
\label{lem:osborn}
For any $n\geq 1$ and $1\leq k\leq n$, there is a bijection~$\psi$ between Dyck paths from~$D_n$ with band-weight~$k$ and Dyck paths from~$D_n$ with~$n-k+1$ peaks.
Consequently, both sets are counted by the Narayana numbers~$N_0(n,n-k+1)=N_0(n,k)$.
\end{lemma}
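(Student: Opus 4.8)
The plan is to prove this result of Osborn~\cite{MR2732117} by first reformulating the band-weight in terms of parity, and then determining its distribution. For the reformulation, note that every step of a lattice path changes the height by $\pm 1$ and the path starts at height~$0$, so the height immediately before the $m$-th step of $x=u_1u_2\cdots u_{2n}$ has the same parity as $m-1$. Consequently an upstep of $x$ starts at even height, i.e.\ lies in one of the strips $\mathbb{R}\times[2i,2i+1]$, precisely if it occupies an odd position in the word~$x$. Hence the band-weight $w(x)$ equals the number of odd-position upsteps of~$x$; in particular $1\le w(x)\le n$ for all $x\in D_n$, matching the range of $n-k+1$.

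To find the distribution of this statistic I would use the first-return decomposition $x=U\,x_1\,D\,x_2$ of a nonempty Dyck path, where $x_1\in D_{a-1}$, $x_2\in D_b$ with $a\ge 1$ and $a+b=n$, and the step matched to the leading~$U$ is the first return of $x$ to height~$0$. This return occurs at an even position, so the decomposition is compatible with the grouping of positions into consecutive pairs: the leading $U$ sits in position~$1$ and contributes~$1$; the factor~$x_1$ is shifted by one position, which exchanges odd and even positions, so it contributes its number of \emph{even}-position upsteps, namely $(a-1)-w(x_1)$; the matched~$D$ sits in an even position and contributes~$0$; and $x_2$ is shifted by an even amount and contributes $w(x_2)$. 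Altogether $w(x)=\big(a-w(x_1)\big)+w(x_2)$. Setting $F_m(q):=\sum_{x\in D_m}q^{w(x)}$ and $G(q,z):=\sum_{m\ge 0}F_m(q)\,z^m$, the recursion becomes
\begin{equation*}
G(q,z)=1+qz\,G(1/q,\,qz)\,G(q,z).
\end{equation*}
Applying this same identity with $q$ replaced by $1/q$ and $z$ replaced by $qz$ yields $G(1/q,qz)=\frac{1}{1-zG(q,z)}$, and substituting this back leaves the quadratic equation $zG^2-(1+z-qz)G+1=0$. This is exactly the functional equation for the generating function $\sum_{m\ge 0}\big(\sum_k N_0(m,k)\,q^k\big)z^m$ of the Narayana polynomials, as obtained from the first-return decomposition of Dyck paths counted by number of peaks (an empty $x_1$ creates the single new peak~$UD$, a nonempty one creates no new peak). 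Since both generating functions are the power-series solution of this quadratic with constant term~$1$, they coincide, so the number of $x\in D_n$ with $w(x)=k$ equals $N_0(n,k)$. By the symmetry $N_0(n,k)=N_0(n,n-k+1)$ of the Narayana numbers this equals $N_0(n,n-k+1)$, the number of Dyck paths of length~$2n$ with $n-k+1$ peaks, and as these two finite sets have equal size a bijection $\psi$ exists.

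For an \emph{explicit} bijection $\psi$ one follows Osborn's construction; alternatively one may try to make the recursion above constructive, recursively transporting the first-return decomposition of $x$ onto the peak-decomposition of its image. The delicate point — which I expect to be the main obstacle — is the complementation $w(x_1)\mapsto (a-1)-w(x_1)$ appearing in the recursion: it forces one to invoke the induction hypothesis (equivalently the Narayana symmetry on the smaller path~$x_1$) rather than matching the two decompositions term by term, and one must still check that the resulting path is a genuine Dyck path and that the map is invertible. If only the cardinality statement is needed, the generating-function computation above is self-contained, and the symmetry $N_0(n,k)=N_0(n,n-k+1)$ is immediate from $N_0(n,k)=\frac1n\binom nk\binom n{k-1}$.
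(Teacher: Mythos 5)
Your argument is correct, but it is genuinely different from what the paper does: the paper offers no proof of Lemma~\ref{lem:osborn} at all, citing Osborn~\cite{MR2732117}, whose result is an explicit bijective construction of~$\psi$. You instead give a self-contained generating-function proof that the band-weight statistic is Narayana-distributed: the parity reformulation (band-weight $=$ number of odd-position upsteps) is correct, the first-return recursion $w(x)=a-w(x_1)+w(x_2)$ is right (the sign flip on $x_1$ coming from the shift by one position is exactly the subtle point, and you handle it correctly via the substitution $q\mapsto 1/q$, $z\mapsto qz$), and eliminating $G(1/q,qz)$ does yield $zG^2-(1+z-qz)G+1=0$, the same quadratic satisfied by the peak-counting Narayana generating function; I checked the small cases $n\le 3$ and they agree. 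Two small remarks: you should say explicitly why the power-series solution of the quadratic with constant term~$1$ is unique (extracting the coefficient of $z^m$ determines $g_m$ from $g_0,\dots,g_{m-1}$, so one line suffices), and you should note that your formula $N_0(n,k)=\frac1n\binom nk\binom n{k-1}$ agrees with the paper's definition~\eqref{eq:narayana} at $r=0$. Your proof buys independence from the reference and is elementary; what it gives up is an explicit description of~$\psi$, since you only get existence from equinumerosity of finite sets. That loss is harmless for this paper: the only use of~$\psi$ is in the proof of Lemma~\ref{lem:WL}, where any fixed bijection between the two sets (for each size and each band-weight) works, so your non-constructive version fully supports the paper's argument, whereas Osborn's construction would additionally make the bijection of Lemma~\ref{lem:WL} explicit.
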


\begin{figure}
\makebox[0cm]{ 
\includegraphics{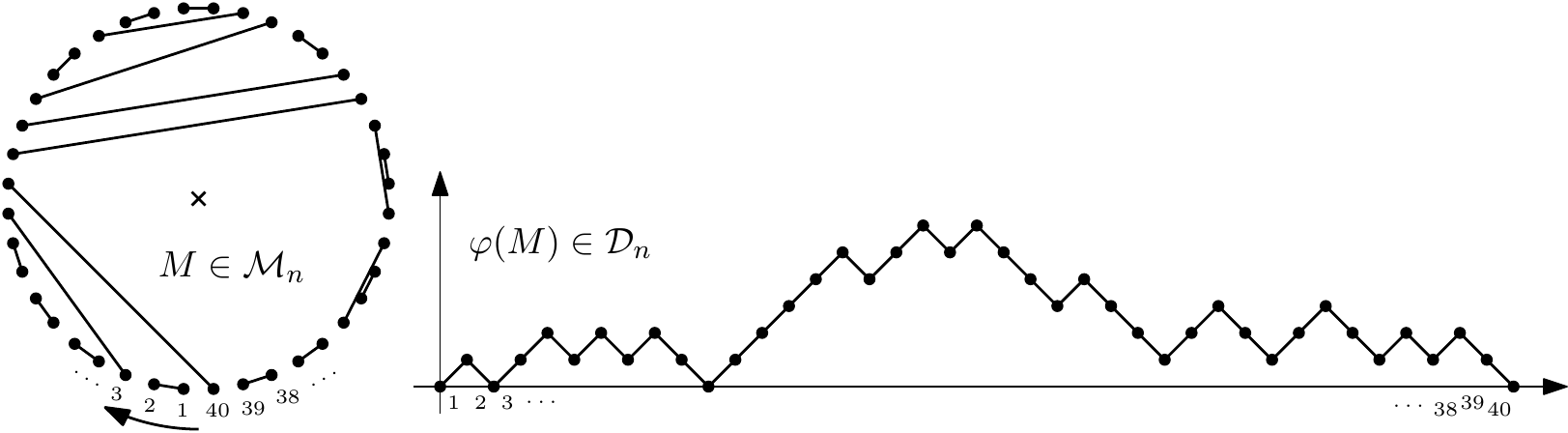}
}
\caption{Bijection between non-crossing matchings and Dyck paths defined after Lemma~\ref{lem:osborn}.
}
\label{fig:dyck}
\end{figure}

Given a matching $M\in\cM_n$, we define a Dyck path $\varphi(M)\in D_n$ as follows; see Figure~\ref{fig:dyck}:
We consider the points $1,\ldots,2n$ along the circle in increasing order, and we record an upstep or downstep for each point according to the following rule.
Every edge of~$M$ has two endpoints, and when we encounter the first of these two points, we record an upstep, and when we encounter the second of these two points, we record a downstep.
It is easy to see that $\varphi$ is a bijection between the sets~$\cM_n$ and~$D_n$.
Moreover, $\varphi$ maps perimeter edges to peaks of the Dyck path (except the edge $(1,2n)$).

Given a non-crossing matching $M\in\cM_n$ and a visible edge $e\in M$, the \emph{segment of~$M$ determined by the edge~$e$}, denoted $M_e$, is the set of all edges of~$M$ that are hidden behind~$e$, plus the edge~$e$ itself.
We also define $M_e^-:=M_e\setminus e$.
Clearly, $M_e$ is also a matching, and therefore also has a weight $w(M_e)=\sum_{f\in M_e}\sgn(f) \cdot \ell(f)$.
Moreover, the total weight~$w(M)$ is obtained by summing the weights~$w(M_e)$ over all visible edges~$e\in M$.

The following lemma asserts that the weight of a matching segment~$M_e$ is given by the band-weight of the corresponding Dyck path~$\varphi(M_e^-)$; see Figure~\ref{fig:segment}.
Note that the Dyck path~$\varphi(M_e^-)$ is constructed by applying the aforementioned bijection~$\varphi$ only to the edges in~$M_e^-$, i.e., this Dyck path has length~$2|M_e^-|$.

\begin{lemma}
\label{lem:segment}
Let $n\geq 2$ be even, and let $M\in\cM_n$ be matching with non-zero weight.
If $M$ has positive weight, then all visible edges of~$M$ are positive, and for every visible edge~$e\in M$ we have $w(M_e)=w(\varphi(M_e^-))$.
If $M$ has negative weight, then all visible edges of~$M$ are negative, and for every visible edge~$e\in M$ we have $w(M_e)=-w(\varphi(M_e^-))$.
\end{lemma}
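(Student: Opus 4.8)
My plan is to reduce the lemma to two assertions that hold for \emph{every} matching $M\in\cM_n$, and to use the hypothesis $w(M)\neq 0$ only at the very end. The assertions are: \textbf{(A)} all visible edges of $M$ have the same sign; and \textbf{(B)} for each visible edge $e$ one has $w(M_e)=\sgn(e)\cdot w(\varphi(M_e^-))$. Granting these, let $s\in\{+1,-1\}$ denote the common sign from (A). Since $w(M)=\sum_{e\text{ visible}}w(M_e)$ (as recalled before the lemma) and every band-weight $w(\varphi(M_e^-))$ is a non-negative integer, (B) gives $w(M)=s\sum_{e\text{ visible}}w(\varphi(M_e^-))$ with the sum $\geq 0$; as $w(M)\neq 0$ that sum is positive, so $s=+1$ exactly when $w(M)>0$ and $s=-1$ exactly when $w(M)<0$, which together with (B) is precisely the statement of the lemma.

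For (A), list the visible edges in clockwise order as $e_1,\dots,e_m$ and orient each $e_i$ as a ray from $u_i$ to $v_i$ with the circle center to its right, so that $\sgn(e_i)=(-1)^{u_i+1}$. The $2\ell(e_i)$ points strictly between $u_i$ and $v_i$ on the short side of $e_i$ are exactly the points hidden behind $e_i$; using the standard fact that every point of the circle is an endpoint of a visible edge or hidden behind one (implicit already in the proof of Theorem~\ref{thm:deg}), the clockwise list of all $2n$ points reads $u_1$, then the $2\ell(e_1)$ points hidden behind $e_1$, then $v_1$, then $u_2$, and so on cyclically. Hence $u_{i+1}=u_i+2\ell(e_i)+2$ modulo $2n$, so all $u_i$ have the same parity and all visible edges the same sign. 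For (B), fix a visible edge $e$, put $k:=\ell(e)=|M_e^-|$, and relabel the $2k$ points of $M_e^-$ clockwise as $1,\dots,2k$; write $N$ for $M_e^-$ in this relabeling and, for an edge $g$ of $N$, let $a(g)$ and $b(g)$ be the numbers of ancestors and of descendants of $g$ in the nesting forest of $N$ (so $a(g)$ is the depth of $g$). The nesting structure of $M_e$ is this forest with the single extra root $e$ above it, and I would prove three facts: \emph{(i)} if $g$ is a child of $g'$ in the nesting tree of $M_e$ then $\sgn(g)=-\sgn(g')$, whence $\sgn(g)=(-1)^{a(g)+1}\sgn(e)$ for $g\in M_e^-$; \emph{(ii)} $\ell(g)=b(g)$ for every $g\in M_e^-$; and \emph{(iii)} the identity $k-\sum_{g\in N}(-1)^{a(g)}b(g)=\#\{g\in N:a(g)\text{ even}\}$.

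Fact (ii) holds because the short side of $g\in M_e^-$ is its interior (minor) arc, which lies wholly inside the segment and carries $b(g)\leq k-1\leq\mu-1$ edges, strictly fewer than the other side, so $\ell(g)=b(g)$. Fact (iii) follows by exchanging the order of summation: $\sum_{g\in N}(-1)^{a(g)}b(g)=\sum_{h\in N}\sum_{g\text{ ancestor of }h}(-1)^{a(g)}=\sum_{h\in N}\sum_{j=0}^{a(h)-1}(-1)^j=\#\{h\in N:a(h)\text{ odd}\}$, and subtracting from $k=|N|$ leaves the edges of even depth. Fact (i) is the delicate one: the $2k+2$ points of $M_e$ span a circular arc shorter than $\pi$ with the center outside their convex hull, so the endpoint of an edge $g\in M_e$ with the center to its right is the one met first going clockwise along that arc, and $\sgn(g)=(-1)^{(\text{original label of that endpoint})+1}$; the clockwise-first endpoints of a parent and of any of its children occupy positions of opposite parity in the relabeling (in the relabeling the children of an edge fill the interior of its interval by consecutive blocks of even length, the first starting immediately after the parent's first endpoint), and this parity difference survives the passage back to original labels because $2n$ is even, even when the point set of $M_e^-$ wraps past the point labeled $2n$. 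Combining (i)--(iii) with $\ell(e)=k$ gives
\[
w(M_e)=\sgn(e)\,k+\sum_{g\in M_e^-}\sgn(g)\,\ell(g)=\sgn(e)\Big(k-\sum_{g\in N}(-1)^{a(g)}b(g)\Big)=\sgn(e)\cdot\#\{g\in N:a(g)\text{ even}\},
\]
and since the band-weight of $\varphi(N)$ is its number of upsteps of even height, which is the number of edges of $N$ at even depth in the nesting forest, this equals $\sgn(e)\cdot w(\varphi(M_e^-))$, establishing (B). The main obstacle is precisely the wrapping phenomenon in fact (i): the point set of a segment need not be an interval of consecutive labels, so an edge's sign-determining endpoint can be its larger rather than its smaller original label, and one must check that the sign alternation down the nesting forest is unaffected — the relabeling and the evenness of $2n$ are exactly what make this work. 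Everything else is routine bookkeeping or the one-line identity (iii).
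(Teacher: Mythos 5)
Your proof is correct, but it takes a genuinely different route from the paper. The paper's argument is organized around the circle center: it introduces, for every odd point $k$, the ray-weight $w(r_k)$ (number of positive minus number of negative edges crossed by the ray $r_k$), observes by double counting that $w(M)=\sum_k w(r_k)$ and $w(M_e)=\sum_{k\in V(M_e)}w(r_k)$, shows that the crossed edges alternate in sign along any ray so that $w(r_k)\in\{-1,0,+1\}$ (this also yields the sign dichotomy for visible edges), and then matches the rays with $w(r_k)=+1$ directly to the upsteps of $\varphi(M_e^-)$ lying in the strips $\mathbb{R}\times[2i,2i+1]$. You instead work with the nesting forest of the segment: signs alternate along parent--child pairs (via the parity of the clockwise-first endpoints, with the wrap-around handled by $2n$ being even), $\ell(g)$ equals the number of descendants of $g$, and an ancestor--descendant double count gives $w(M_e)=\sgn(e)\cdot\#\{g\in M_e^-: \text{depth of } g \text{ even}\}$, which you then identify with the band-weight since upsteps starting at even height correspond exactly to edges of even depth; finally you deduce the sign statement at the end from $w(M)=\sum_e w(M_e)$ and the non-negativity of band-weights, using $w(M)\neq 0$ only there, whereas the paper obtains it up front from the ray-weights. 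Your facts (i)--(iii) all check out (in particular $\ell(g)=b(g)$ and $\ell(e)=|M_e^-|$ follow from $\ell(e)\leq\mu$ exactly as you indicate, and the convex-hull/arc-length argument for the sign-determining endpoint is sound), so the argument is complete. What each approach buys: the paper's ray-weight identity is a compact geometric double count that handles the sign dichotomy and the band-weight correspondence in one stroke; your version is purely combinatorial (it never references the center once visibility and signs are fixed, in the spirit of the paper's remark on convex position), it proves the same-sign claim for visible edges in full detail where the paper only asserts it, and it isolates a clean intermediate invariant -- the number of segment edges at even nesting depth -- to which both $\sgn(e)\,w(M_e)$ and the band-weight are equal.
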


\begin{figure}
\includegraphics{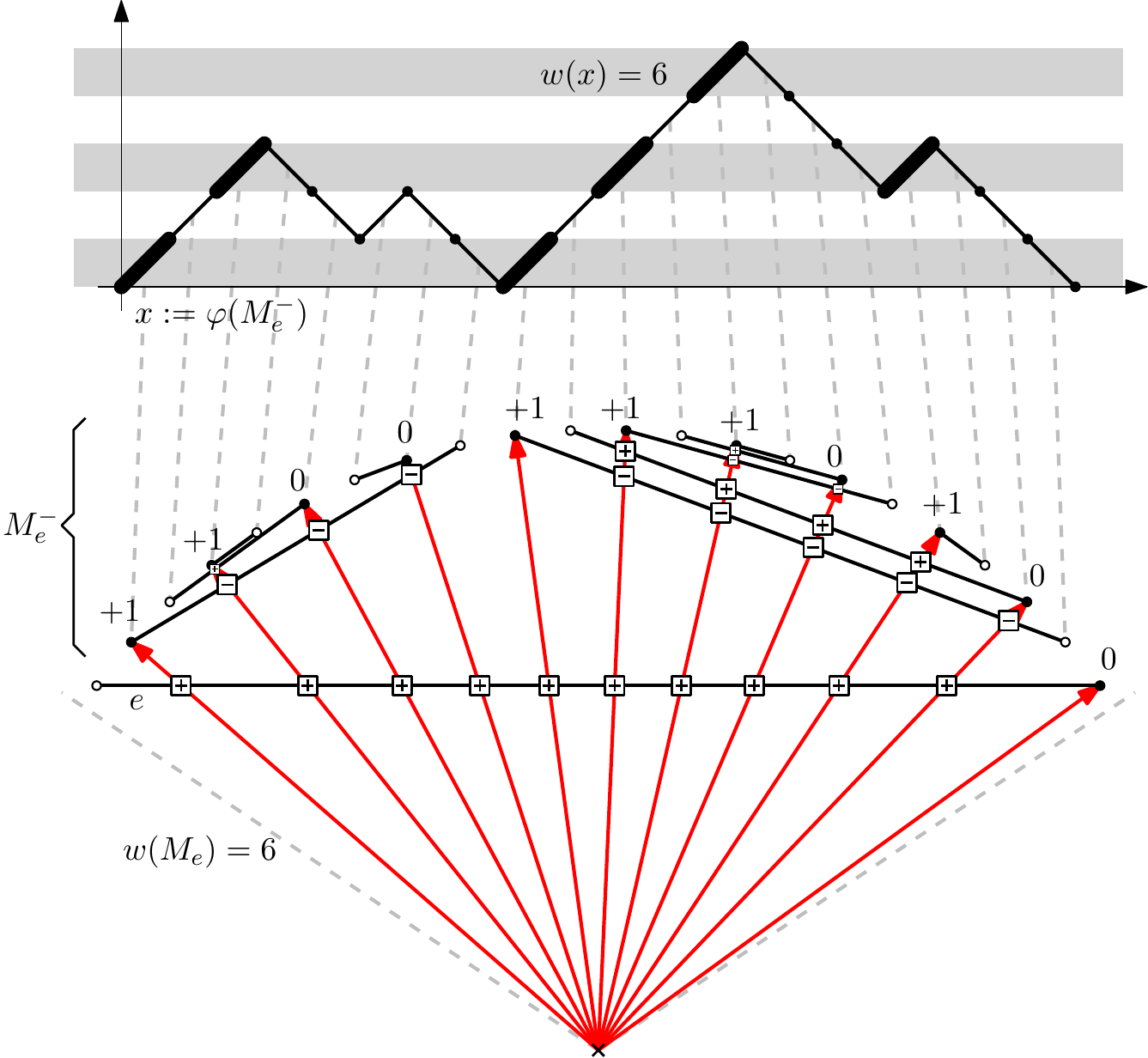}
\caption{Bijection between the segment of a non-crossing matching and a Dyck path, and the corresponding weights.
The bold steps on the Dyck path are the upsteps that contribute to its band-weight.
The crossings between rays and matching edges are marked by $+$ and $-$, showing the contributions of each crossing to the matching weight and the ray-weight.
The resulting ray-weights $w(r_k)\in\{0,+1\}$ are written at the endpoints of the rays.
}
\label{fig:segment}
\end{figure}

\begin{proof}
For any odd point~$k$ in our point set, let $r_k$ denote the ray from the circle center to that point, and define the \emph{ray-weight of~$r_k$}, denoted $w(r_k)$, as the number of all positive edges crossed by the ray~$r_k$ minus the number of all negative edges crossed by~$r_k$; see Figure~\ref{fig:segment}, where the edge matching the endpoint of the ray~$r_k$ does not count as crossed.
For any fixed edge~$e\in M$, the number of crossings of~$e$ with such rays equals~$\ell(e)$, and so we have
\begin{equation}
\label{eq:wMrk}
w(M)=\sum_{e\in M} \sgn(e) \cdot \ell(e)=\sum_{k=1,3,\ldots,2n-1} w(r_k).
\end{equation}

Observe that along any ray~$r_k$, the edges of~$M$ crossed by that ray are alternately positive and negative.
This is because the number of points that lie to the left or right of~$r_k$ and between the endpoints of two consecutively crossed edges of~$M_e$ is even.
It follows that $w(r_k)\in\{-1,0,+1\}$.
Also note that the visible edges in~$M$ must either all be positive, or all negative.
If they are all positive, then we have $w(r_k)\in\{0,+1\}$, and~\eqref{eq:wMrk} implies that~$w(M)>0$.
If they are all negative, then we have $w(r_k)\in\{-1,0\}$, and~\eqref{eq:wMrk} implies that~$w(M)<0$.

Now consider a matching~$M$ with positive weight, and one of its visible edges~$e\in M$.
We already know that the edge~$e$ is positive, and so every ray crossing this edge satisfies $w(r_k)\in\{0,+1\}$.
We also have
\begin{equation}
\label{eq:wMe}
w(M_e)=\sum_{f\in M_e} \sgn(f) \cdot \ell(f)=\sum_{k\in V(M_e)} w(r_k),
\end{equation}
where $V(M_e)$ denotes the set of all odd points matched by~$M_e$.
Clearly, we have $w(r_k)=+1$ if and only if~$r_k$ crosses an odd number of edges of~$M_e$.
Moreover, $w(r_k)=+1$ if and only if the edge from~$M_e$ that matches the point~$k$ lies to the right of the ray~$r_k$, as the number of points from~$V(M_e)$ to the left of~$r_k$ is odd.
Consequently, $w(r_k)=+1$ if and only if the step of the Dyck path~$\varphi(M_e^-)$ corresponding to the point~$k$ is an upstep, and this upstep lies in one of the horizontal strips~$\mathbb{R}\times[2i,2i+1]$ for $i=0,1,\ldots$.
From this and~\eqref{eq:wMe} we obtain $w(M_e)=w(\varphi(M_e^-))$.

By replacing $+1$ by $-1$ in the above argument, we obtain that if a matching~$M$ has negative weight, then each of its visible edges~$e\in M$ satisfies $w(M_e)=-w(\varphi(M_e^-))$.
This completes the proof of the lemma.
\end{proof}

For $c=2,\ldots,n$ we let $\cL_{n,c}$ denote the set of all matchings from~$\cM_n$ that have exactly $c$ perimeter edges.

\begin{lemma}
\label{lem:WL}
For even $n\geq 2$ and any $c=1,2,\ldots,n-2$ we have $|\cW_{n,c}\cup \cW_{n,-c}|=|\cL_{n,n-c}|$.
\end{lemma}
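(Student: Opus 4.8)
The plan is to produce an explicit bijection $\cW_{n,c}\cup\cW_{n,-c}\to\cL_{n,n-c}$ by transforming each \emph{segment} of a matching via Osborn's bijection $\psi$ from Lemma~\ref{lem:osborn}. First I would reduce to one sign. As observed in the proof of Lemma~\ref{lem:segment}, the visible edges of any matching are either all positive or all negative; call such a matching \emph{positive} resp.\ \emph{negative}, so that $\cM_n=\cM_n^+\sqcup\cM_n^-$ (a disjoint union, since every matching has at least two visible edges). Rotation by $\pi/n$ maps $\cM_n^+$ bijectively onto $\cM_n^-$, negates the weight, and preserves all edge lengths (hence the number of perimeter edges); consequently $|\cW_{n,c}\cup\cW_{n,-c}|=2\,|\cW_{n,c}|$ and $|\cL_{n,n-c}|=2\,|\cL_{n,n-c}\cap\cM_n^+|$, so it suffices to prove $|\cW_{n,c}|=|\cL_{n,n-c}\cap\cM_n^+|$. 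I will also use that $M_0$ is the only positive matching of weight $0$: by the same reasoning as in the proof of Lemma~\ref{lem:segment} one has $w(M_e)=w(\varphi(M_e^-))$ for \emph{every} visible edge $e$ of \emph{any} positive matching (the weight hypothesis there is only used to pin down the sign), so a positive matching of weight $0$ has all its segments trivial and thus equals~$M_0$.

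Next I would define a map $\Phi\colon\cM_n^+\to\cM_n^+$. Let $M\in\cM_n^+$ have visible edges $e_1,\dots,e_t$ with $t\geq 2$, and let the segment interior $M_{e_i}^-$ be the non-crossing matching on $2m_i$ points hidden behind $e_i$, so $\sum_i(m_i+1)=n$. Let $\Phi(M)$ be the matching obtained from $M$ by replacing, for each $i$, the interior $M_{e_i}^-$ by $\varphi^{-1}\big(\psi(\varphi(M_{e_i}^-))\big)$ on the same $2m_i$ points, keeping $e_1,\dots,e_t$ unchanged. Since $n$ is even, the interior of $e_i$ lies strictly on the side of $e_i$ away from the center, so $\Phi(M)$ is a non-crossing perfect matching whose visible edges are again exactly $e_1,\dots,e_t$; in particular $\Phi(M)\in\cM_n^+$. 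The segment decomposition of a matching is canonical and $\psi$ is a bijection, so $\Phi$ is a bijection of $\cM_n^+$, with inverse obtained by the same construction using $\psi^{-1}$ in place of $\psi$.

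The key point is to count the perimeter edges of $\Phi(M)$ when $M\in\cW_{n,c}$ with $c\geq 1$. Set $b_i:=w(\varphi(M_{e_i}^-))$, the band-weight of the $i$th interior path ($b_i=0$ when $m_i=0$); then $\sum_i b_i=\sum_i w(M_{e_i})=w(M)=c$ by Lemma~\ref{lem:segment}, and by Lemma~\ref{lem:osborn} the path $\psi(\varphi(M_{e_i}^-))$ has $m_i-b_i+1$ peaks whenever $m_i\geq 1$. Now an edge of $\Phi(M)$ is a perimeter edge iff it joins two cyclically consecutive points; such an edge lies inside a single segment, and it is either a visible edge $e_i$ with $m_i=0$, or a short edge of some interior $\varphi^{-1}(\psi(\varphi(M_{e_i}^-)))$, equivalently a peak of $\psi(\varphi(M_{e_i}^-))$. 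Here $t\geq 2$ guarantees that no segment is the whole cycle, so no wrap-around exception (of the kind ``$\varphi$ sends perimeter edges to peaks except the edge through the cut between points $2n$ and $1$'') occurs. Hence, with $t_0:=\#\{i:m_i=0\}$ and $\sum_i m_i=n-t$,
\begin{equation*}
\#\{\text{perimeter edges of }\Phi(M)\}=\sum_{i:\,m_i\geq 1}(m_i-b_i+1)+t_0=(n-t)-c+(t-t_0)+t_0=n-c,
\end{equation*}
so $\Phi(\cW_{n,c})\subseteq\cL_{n,n-c}\cap\cM_n^+$; and trivially $\Phi(\cW_{n,0})=\{M_0\}\subseteq\cL_{n,n}\cap\cM_n^+$.

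Finally I would upgrade these inclusions to equalities by a counting argument. By Lemma~\ref{lem:weight} together with the weight-$0$ fact above, $\cM_n^+=\bigsqcup_{c=0}^{n-2}\cW_{n,c}$; and since every matching has between $2$ and $n$ perimeter edges, $\cM_n^+=\bigsqcup_{c=0}^{n-2}\big(\cL_{n,n-c}\cap\cM_n^+\big)$. As $\Phi$ is a bijection of $\cM_n^+$ mapping the $c$th block of the first partition into the $c$th block of the second, and the two partitions have the same total size, comparing cardinalities block by block forces $\Phi(\cW_{n,c})=\cL_{n,n-c}\cap\cM_n^+$; hence $|\cW_{n,c}|=|\cL_{n,n-c}\cap\cM_n^+|$, and combined with the reduction of the first paragraph, $|\cW_{n,c}\cup\cW_{n,-c}|=|\cL_{n,n-c}|$. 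The step I expect to need the most care is the perimeter-edge count: identifying exactly which edges of $\Phi(M)$ are perimeter edges (the trivial segments together with the within-segment short edges, the latter being in bijection with the peaks of the transformed interiors), and verifying that the familiar ``$(1,2n)$'' exception of the map $\varphi$ causes no off-by-one because every matching has at least two visible edges.
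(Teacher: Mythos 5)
Your proposal is correct and follows essentially the same route as the paper's proof: decompose the matching into segments determined by its visible edges, transform each segment interior via $\varphi^{-1}\circ\psi\circ\varphi$ using Osborn's bijection (Lemma~\ref{lem:osborn}), and count perimeter edges via the peak count $n_e-w(M_e)+1$, exactly as in the paper. Your additional touches (reducing to positive matchings by rotation, handling weight~$0$, the partition-counting argument for surjectivity, and checking that the $(1,2n)$ exception of $\varphi$ is harmless inside segments) only make explicit details the paper leaves implicit.
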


\begin{proof}
We establish a bijection between the sets $\cW_{n,c}\cup\cW_{n,-c}$ and~$\cL_{n,n-c}$.
Given a matching $M\in\cW_{n,c}$, let $E\seq M$ be the set of visible edges, and define $E_0:=\{e\in E\mid \ell(e)=0\}$ and $E_1:=E\setminus E_0$.
We partition the matching~$M$ into segments $M_e$, one for every visible edge~$e\in E$.
We know that $w(M)=c$, and so $\sum_{e\in E}w(M_e)=\sum_{e\in E_1}w(M_e)=c$.
For any edge~$e\in E_1$ and the corresponding matching $M_e$ we define $n_e:=|M_e^-|=|M_e|-1$.
As $w(M)=c>0$, Lemma~\ref{lem:segment} implies that all visible edges of~$M$ are positive and that the Dyck path~$\varphi(M_e^-)\in D_{n_e}$ has band-weight~$w(M_e)$ for all~$e\in E_1$.
Moreover, by Lemma~\ref{lem:osborn}, there is a Dyck path~$\psi(\varphi(M_e^-))\in D_{n_e}$ with exacly~$n_e-w(M_e)+1$ peaks.
Applying the inverse of the bijection~$\varphi$ to this Dyck path, we may replace the matching~$M_e^-$ by the matching $M_e':=\varphi^{-1}(\psi(\varphi(M_e^-)))$ on the same vertex set.
Moreover, the matching~$M_e'$ has exactly $n_e-w(M_e)+1$ many perimeter edges, as $\varphi^{-1}$ maps peaks to perimeter edges.
We let $M'$ denote the resulting matching, obtained by replacing every matching~$M_e$ by~$M_e'$ for all~$e\in E_1$.
Moreover, the number of perimeter edges of~$M'$ is
\begin{equation*}
|E_0|+\sum_{e\in E_1}(n_e-w(M_e)+1)=\underbrace{|E_0|+\sum_{e\in E_1}(n_e+1)}_{=n}-\underbrace{\sum_{e\in E_1}w(M_e)}_{=c}=n-c.
\end{equation*}
The mapping $M\mapsto M'$ can be extended analogously to matchings from~$\cW_{n,-c}$.
Note that $M$ and $M'$ have the same set of visible edges, and so the resulting mapping is indeed a bijection.
\end{proof}

\begin{lemma}
\label{lem:N1}
For even $n\geq 2$ and any $c=0,1,\ldots,n-2$, we have $|\cL_{n,n-c}|=N_1(n,c+1)$.
\end{lemma}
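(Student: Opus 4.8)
The plan is to transport the statement through the bijection $\varphi\colon\cM_n\to D_n$ defined after Lemma~\ref{lem:osborn}, under which perimeter edges essentially become peaks, and then to express everything in terms of Narayana numbers~$N_0$.

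Call a Dyck path \emph{irreducible} if it meets the line $y=0$ only at its two endpoints. I would first record two facts about $\varphi$. On the one hand, $\varphi$ restricts to a bijection between the perimeter edges of $M$ other than $\{1,2n\}$ and the peaks of~$\varphi(M)$; this refines the observation made right after the definition of~$\varphi$. On the other hand, $\{1,2n\}\in M$ if and only if point~$1$ is matched with point~$2n$, which under~$\varphi$ means the initial up-step is matched with the final down-step, i.e.\ that $\varphi(M)$ is irreducible. Since $\{1,2n\}$ is always a perimeter edge when it occurs, it follows that $M$ has exactly $n-c$ perimeter edges if and only if either $\varphi(M)$ is reducible and has exactly $n-c$ peaks, or $\varphi(M)$ is irreducible and has exactly $n-c-1$ peaks. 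Writing $a(n,k)$ and $b(n,k)$ for the number of reducible and irreducible Dyck paths in~$D_n$ with exactly $k$ peaks, this gives $|\cL_{n,n-c}|=a(n,n-c)+b(n,n-c-1)$.

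Next I would evaluate $a$ and $b$. An irreducible Dyck path of length~$2n$ has the form $\mathrm{U}\,y\,\mathrm{D}$ with $y\in D_{n-1}$, and because $n\geq 2$ the path $y$ is nonempty, so neither junction creates or destroys a peak; hence $x\mapsto y$ is a peak-preserving bijection and $b(n,k)=N_0(n-1,k)$. Since $a(n,k)+b(n,k)=N_0(n,k)$, we obtain
\[
|\cL_{n,n-c}| \;=\; N_0(n,n-c)\;-\;N_0(n-1,n-c)\;+\;N_0(n-1,n-c-1).
\]
To finish, I would insert the closed form $N_0(m,k)=\tfrac{1}{m+1}\binom{m+1}{k}\binom{m-1}{k-1}$ (the case $r=0$ of~\eqref{eq:narayana}) and simplify; after the substitution $p:=n-c$ the claim becomes $N_0(n,p)-N_0(n-1,p)+N_0(n-1,p-1)=N_1(n,n-p+1)$, and a direct computation with binomial coefficients confirms this, the boundary cases $c=0$ and $c=n-2$ (where one summand vanishes) being checked separately. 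I expect the only subtle step to be the first one — correctly identifying the role of the edge $\{1,2n\}$ and equating ``$\{1,2n\}\in M$'' with ``$\varphi(M)$ irreducible'' — while the closing identity is routine if somewhat tedious.
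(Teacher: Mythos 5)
Your proposal is correct and follows essentially the same route as the paper: you split $\cL_{n,n-c}$ according to whether $\{1,2n\}\in M$ (equivalently, whether $\varphi(M)$ is irreducible), and via the peak/perimeter-edge correspondence arrive at exactly the paper's expression~\eqref{eq:N0sum}, namely $N_0(n,n-c)-N_0(n-1,n-c)+N_0(n-1,n-c-1)$. The only difference is the final step: the paper establishes that this equals $N_1(n,c+1)$ combinatorially, by partitioning the lattice paths counted by $N_1(n,c+1)$ according to their last step, whereas you propose a direct binomial-coefficient verification, which is indeed routine and confirms the same identity.
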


\begin{proof}
In this proof we will use the combinatorial interpretation of Narayana numbers discussed at the beginning of Section~\ref{sec:narayana}.
We will also use that
\begin{equation}
\label{eq:symm}
N_0(n,k)=N_0(n,n-k+1),
\end{equation}
which can be verified directly using the definition~\eqref{eq:narayana}.

Our goal is to count non-crossing matchings with $n$ edges and exactly $n-c$ many perimeter edges.
For this we partition the set $\cL_{n,n-c}$ into two sets $A$ and $B$, where $A$ contains all matchings that have an edge between points~1 and~$2n$, and $B$ contains all matchings that do not have an edge between these two points.
We first compute the size of~$A$.
For a matching~$M\in A$, consider the corresponding Dyck path~$x:=\varphi(M)$.
The Dyck path~$x$ starts with an upstep, ends with a downstep, and the middle $2(n-1)$ steps form a Dyck path from~$D_{n-1}$ that has exactly $n-c-1$ peaks.
Moreover, all Dyck paths of this form are obtained as images of~$A$ under~$\varphi$, and so by the aforementioned combinatorial interpretation of the Narayana numbers we obtain
\begin{equation}
\label{eq:sizeA}
|A|=N_0(n-1,n-c-1)\eqBy{eq:symm} N_0(n-1,c+1).
\end{equation}
We now compute the size of~$B$.
For a matching~$M\in B$, consider the corresponding Dyck path~$x:=\varphi(M)$.
It has $2n$ steps, exactly $n-c$ peaks, and it has the additional property that it touches the abscissa at least three times, once in the beginning, once in the end, and at least once more somewhere in between.
We count such paths by considering all Dyck paths with $2n$ steps and exactly $n-c$ peaks, and subtract the ones that touch the abscissa exactly twice (once in the beginning and once in the end).
The latter Dyck paths have the following form: they start with an upstep, end with a downstep, and the middle $2(n-1)$ steps form a Dyck path from~$D_{n-1}$ with exactly $n-c$ peaks.
Consequently, we obtain
\begin{equation}
\label{eq:sizeB}
|B|=N_0(n,n-c)-N_0(n-1,n-c)\eqBy{eq:symm} N_0(n,c+1)-N_0(n-1,c).
\end{equation}
Combining these observations yields
\begin{equation}
\label{eq:N0sum}
|\cL_{n,n-c}|=|A|+|B| \eqByM{\eqref{eq:sizeA},\eqref{eq:sizeB}} N_0(n-1,c+1)+N_0(n,c+1)-N_0(n-1,c).
\end{equation}
To complete the proof, we need to show that the right-hand side of~\eqref{eq:N0sum} equals~$N_1(n,c+1)$.

We know that $N_1(n,c+1)$ counts non-negative lattice paths with $n$ upsteps, $n-1$ downsteps and exactly $c+1$ peaks.
We partition these lattice paths into two sets~$A'$ and~$B'$, where $A'$ contains all those that end with an upstep, and $B'$ contains all those that end with a downstep.
We clearly have
\begin{equation}
\label{eq:N1AB'}
N_1(n,c+1)=|A'|+|B'|.
\end{equation}
To count the lattice paths in~$A'$, we remove the last upstep, yielding a Dyck path with $2(n-1)$ steps and exactly $c+1$ peaks, showing that
\begin{equation}
\label{eq:sizeA'}
|A'|=N_0(n-1,c+1).
\end{equation}
To count the lattice paths in~$B'$, we append a downstep in the end, yielding a Dyck path with $2n$ steps and exactly $c+1$ peaks, with the additional property that the last two steps are both downsteps.
We count such paths by considering all Dyck paths with $2n$ steps and exactly $c+1$ peaks, and subtract the ones that end with an upstep followed by a downstep.
Omitting the last two steps from the latter set of Dyck paths yields Dyck paths with $2(n-1)$ steps and exactly $c$ peaks.
We thus obtain
\begin{equation}
\label{eq:sizeB'}
|B'|=N_0(n,c+1)-N_0(n-1,c).
\end{equation}
Combining \eqref{eq:N1AB'}, \eqref{eq:sizeA'}, and~\eqref{eq:sizeB'} shows that the right-hand side of~\eqref{eq:N0sum} equals $N_1(n,c+1)$, as claimed.
\end{proof}

We are now ready to prove Theorem~\ref{thm:narayana}.

\begin{proof}[Proof of Theorem~\ref{thm:narayana}]
For $c=0$, we have $|\cW_{n,0}|=N_1(n,1)/2=1$.
For $c\geq 1$, combining Lemma~\ref{lem:WL} and~\ref{lem:N1} shows that $|\cW_{n,c}\cup \cW_{n,-c}|=N_1(n,c+1)$.
Furthermore, rotation by~$\pi/n$, either clockwise or counterclockwise, is a bijection between the sets~$\cW_{n,c}$ and~$\cW_{n,-c}$, proving that both sets have the same cardinality, so $|\cW_{n,c}|=|\cW_{n,-c}|=N_1(n,c+1)/2$.
This completes the proof of the theorem.
\end{proof}

\section{Rainbow cycles}
\label{sec:rainbow}

In this section, we briefly turn back our attention to the flip graph~$\cG_n$ discussed in the beginning, which contains all possible flips, not just the centered ones.

Recall that for any integer $r\geq 1$, an \emph{$r$-rainbow cycle} in the graph~$\cG_n$ is a cycle with the property that every possible matching edge appears exactly $r$ times in flips along this cycle.
Clearly, this means that every edge must also disappear exactly $r$ times in flips along the cycle.
The notion of rainbow cycles was introduced in~\cite{MR4046775}, and studied for several different flip graphs, including the graph~$\cG_n$.
The authors showed that $\cG_n$ has a 1-rainbow cycle for $n=2,4$, and a 2-rainbow cycle for $n=6,8$.
It was also proved that $\cG_n$ has no 1-rainbow cycle for any odd $n\geq 3$ and for $n=6,8,10$.
The last result was extended to the case $n=12$ in~\cite{Milich2018}.

In this section, we extend these results as follows:

\begin{theorem}
\label{thm:rainbow}
For odd $n\geq 3$ and any $r\geq 1$, there is no $r$-rainbow cycle in~$\cG_n$. \\
For even $n\geq 2$ and any $r>2/n^2\cdot N_1(n,n/2)$, there is no $r$-rainbow cycle in~$\cG_n$.
\end{theorem}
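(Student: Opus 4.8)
The plan is to reduce both statements to the subgraph~$\cH_n$ and then argue separately for the two parities. By \cite[Lemma~10]{MR4046775}, every $r$-rainbow cycle of~$\cG_n$ uses only centered flips, hence is a (simple) cycle of~$\cH_n$. The second ingredient, valid for all~$n$, is a length count: a chord of the $2n$ points is an edge of some matching in~$\cM_n$ precisely when its two endpoints have opposite parity, so there are exactly $n^2$ such chords; and every flip deletes two chords from the current matching and inserts two others. Thus, if $Z$ is an $r$-rainbow cycle of length~$L$, counting insertions in two ways gives $2L = r\,n^2$, i.e.\ $L = \tfrac12 r\,n^2$.

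For even~$n$ this already suffices. A simple cycle is connected, hence contained in one component of~$\cH_n$, which by Corollary~\ref{cor:size} has at most $N_1(n,n/2)$ vertices; a simple cycle has at most as many edges as vertices, so $L \le N_1(n,n/2)$. Together with $L = \tfrac12 r\,n^2$ this gives $r \le 2N_1(n,n/2)/n^2$, i.e.\ no $r$-rainbow cycle exists once $r > 2N_1(n,n/2)/n^2$.

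For odd~$n$ the graph~$\cH_n$ is connected (Theorem~\ref{thm:conn}), so the component bound is useless and a structural argument is required; this is the hard part. First, $\cH_n\subseteq\cG_n$ is bipartite \cite{MR1939072}, so $L = \tfrac12 r\,n^2$ must be an \emph{even} integer, which forces $4\mid r$; in particular $L \ge 2n^2 > C_n = |\cM_n|$ for $n\in\{3,5\}$, disposing of these cases. For $n\ge 7$ I would exploit the $n$ chords through the circle center (the antipodal chords, each of length~$\mu$). Every matching contains at most one of them; call it \emph{central} when it does. As one checks directly (cf.\ the proof of Theorem~\ref{thm:deg}), every centered flip performed at a central matching must involve its central edge, since otherwise the flip's quadrilateral would lie on one side of that edge and miss the center. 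Hence along~$Z$ the $r$ occurrences of each antipodal chord sit at $r$ distinct central matchings, each immediately preceded by the flip adding it and immediately followed by the flip removing it. Fixing the sign of an antipodal edge by convention (so that it contributes~$0$ to the weight~$w$), these ``central'' flips change~$w$ by~$0$, while every other centered flip changes~$w$ by~$\pm(n-2)$, exactly as in Lemma~\ref{lem:weight}. Moreover, for odd~$n$ the weight ranges only over $[-(n-3),n-3]$ (the longest edges contribute nothing) --- a window too narrow to permit two $\pm(n-2)$-steps of the same sign, even with central flips interspersed. So along~$Z$ the non-central flips alternate in sign, there are exactly $nr$ ``adding'' and $nr$ ``removing'' central flips, and therefore $\tfrac12 r n(n-4)$ non-central flips, split evenly in sign.

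Turning this bookkeeping into a contradiction \emph{uniformly} in~$n$ is the main obstacle. The plan is to follow~$w$ around~$Z$: the non-central flips decompose~$Z$ into maximal runs along which~$w$ oscillates between two values (returning to its starting value only after a run of even length), glued through the $nr$ central matchings by weight-preserving central flips; combined with the rainbow constraint that exactly~$r$ of these $nr$ central matchings belong to each of the $n$ antipodal chords, one should be able to force~$w$ out of its admissible range $[-(n-3),n-3]$ somewhere on~$Z$. Making this final step precise, with estimates independent of~$n$, is where the real work lies.
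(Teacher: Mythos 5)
Your even-$n$ argument is exactly the paper's: restrict to $\cH_n$ via \cite[Lemma~10]{MR4046775}, note that an $r$-rainbow cycle has length $rn^2/2$, and compare with the component-size bound $N_1(n,n/2)$ of Corollary~\ref{cor:size}. That half is fine.

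The odd case, however, has a genuine gap, and you say so yourself: the weight bookkeeping is never turned into a contradiction, and ``making this final step precise \ldots{} is where the real work lies'' is precisely the part that is missing. Moreover, the intermediate claims you lean on are not available as stated: the sign/weight machinery and Lemma~\ref{lem:weight} are defined and proved in the paper only for even~$n$, so the assertions that non-central centered flips change $w$ by exactly $\pm(n-2)$, that these signs must alternate, and that $w$ is confined to $[-(n-3),n-3]$ for odd~$n$ would all have to be re-established from scratch (and the alternation claim is exactly what an interspersed weight-preserving central flip could in principle break, which is why your ``window too narrow'' step does not close). The paper's actual proof for odd $n$ avoids all of this with a short averaging argument on edge lengths: there are $2n$ possible matching edges of each length $0,\ldots,\mu-1$ and only $n$ of length $\mu$, so along an $r$-rainbow cycle the average length of the edges appearing or disappearing is $(n-2)/4+1/(4n)>(n-2)/4$, whereas by Lemma~\ref{lem:centered} the four edges of any flip quadrilateral have total length at most $n-2$ (with equality only for centered flips), i.e.\ per-edge average at most $(n-2)/4$ --- a contradiction for every $r\geq 1$, with no case distinctions, no bipartiteness/divisibility considerations, and no need even to restrict to~$\cH_n$. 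You should either adopt such a global counting argument or supply the missing analysis that forces $w$ out of its admissible range; as written, the odd half of the theorem is not proved.
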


\begin{proof}
For any $n\geq 2$, the number of possible matching edges is~$n^2$.
As in every flip, two edges appear in the matching and two edges disappear, an $r$-rainbow cycle must have length~$rn^2/2$.

Let $n\geq 3$ be odd.
There are $2n$ distinct possible matching edges for each length $c=0,\ldots,\mu-1$, and only $n$ distinct possible matching edges of length~$c=\mu$.
It follows that the average length of all edges appearing or disappearing along an $r$-rainbow cycle is
\begin{equation*}
\frac{\sum\nolimits_{c=0}^{\mu-1}c\cdot 2n+\mu\cdot n}{n^2} \;\;\eqBy{eq:mu}\;\; \frac{n-2}{4}+\frac{1}{4n}>\frac{n-2}{4}.
\end{equation*}
However, by Lemma~\ref{lem:centered}, the average length of the four edges appearing or disappearing in a centered flip is only $(n-2)/4$, and even smaller for any non-centered flip.
Consequently, there can be no $r$-rainbow cycle.

Let $n\geq 2$ be even.
It was proved in~\cite[Lemma~10]{MR4046775} (with a similar averaging argument as given before for odd~$n$) that every $r$-rainbow cycle in~$\cG_n$ may only contain centered flips, i.e., we may restrict our attention to the subgraph $\cH_n\seq \cG_n$ given by centered flips.
By Corollary~\ref{cor:size}, all components of this graph contain at most $N_1(n,n/2)$ vertices.
Consequently, if the length of the cycle exceeds this bound, then no such cycle can exist.
This is the case if $rn^2/2>N_1(n,n/2)$, or equivalently, if $r>2/n^2\cdot N_1(n,n/2)$.
\end{proof}

\section{Open questions}
\label{sec:open}

We conclude this paper with some open questions.

\begin{itemize}[topsep=0mm,leftmargin=4mm]
\item
For odd $n\geq 3$, a natural task is to narrow down the bounds for the diameter of the graph~$\cH_n$ given by Theorem~\ref{thm:diam}.
We believe that the answer is $3n-7$, which is the correct value for $n=3,5,7,9,11$.
To this end, it seems worthwile to further investigate the layer structure of the graph~$\cH_n$ for odd~$n$ (see Figure~\ref{fig:h5}), where the two matchings that have only perimeter edges appear at the left and right end.
In particular, what is the combinatorial interpretation of the $3n-6=3(n-2)$ layers of this graph?

\item
For even $n\geq 4$, it would be very interesting to prove that the number of components of the graph~$\cH_n$ is exactly $C_{n/2}+n-3$, which we established as a lower bound in Corollary~\ref{cor:comp}.
We verified with computer help that this lower bound is tight for $n=4,6,8,10,12,14$.
What remains to be shown is that matchings that are not centrally symmetric and that either have the same weight or weight difference~$(n-2)$ can be transformed into one another by centered flips.
The proof of Lemma~\ref{lem:not-tree} might give an idea how to achieve this.

\item
It is also open whether $r$-rainbow cycles exist in the graph~$\cG_n$ for even $n\geq 14$ and any~$1\leq r\leq 2/n^2\cdot N_1(n,n/2)$.
As mentioned before, we may restrict our search to the subgraph~$\cH_n\seq \cG_n$.

\end{itemize}

\section*{Acknowledgements}

We thank the anonymous reviewers of this paper who provided many insightful comments.
In particular, one referee's observation about our proof of Lemma~\ref{lem:4flips} improved our previous upper bound on the diameter of~$\cH_n$ for odd~$n$ from $\cO(n\log n)$ to~$\cO(n)$ (recall Theorem~\ref{thm:diam}).

Figure~\ref{fig:h6} in our paper was obtained by slightly modifying Figure~10 from~\cite{MR4046775}, and the authors of this paper kindly provided us with the source code of their figure.

\bibliographystyle{alpha}
\bibliography{refs}

\end{document}